\documentclass{ws-ijac}

\PassOptionsToPackage{a4paper}{geometry}

\usepackage{url,cite}
\usepackage{xcolor}
\usepackage[verbose]{hyperref}
\hypersetup{colorlinks=false,allbordercolors=blue,pdfborderstyle={/S/U/W 1}}

\usepackage{multicol}
\usepackage{amssymb}
\usepackage{tikz,tkz-tab}              
\usetikzlibrary{cd}                
\usetikzlibrary{arrows.meta, decorations.pathmorphing}

\usepackage{amsmath}
\usepackage{verbatim}
\usepackage{csquotes}

\numberwithin{equation}{section}

\newcommand{\N}{\mathbb{N}}

\newcommand{\wl}[1]{\lvert#1\rvert}
\def\VR(#1,#2){\vrule width0pt height#1mm depth#2mm}
\newcommand{\ldiv}{\le}
\newcommand{\ldivt}{\mathrel{\widetilde{\le}}}
\newcommand{\ldivset}{D}
\newcommand{\rdivset}{\widetilde{D}}
\newcommand{\strldiv}{<}
\newcommand{\strldivt}{\mathrel{\widetilde{<}}}
\newcommand{\factor}{\subseteq}
\newcommand{\strfactor}{\mathrel{\subset}}
\renewcommand{\gcd}{\wedge}
\newcommand{\gcdt}{\mathrel{\widetilde\gcd}}
\newcommand{\lcm}{\vee}
\newcommand{\lcmt}{\mathrel{\widetilde\lcm}}

\newcommand{\pres}[2]{\left\langle #1 \middle\vert #2 \right\rangle}

\newcommand{\eqeq}{\simeq}
\newcommand{\weq}{\equiv}
\newcommand{\meq}{\def\arraystretch{0}\begin{array}{c}\overline{\overline{\,.\,}}\end{array}}

\newcommand{\defit}[1]{\emph{#1}}
\newcommand{\rcb}{\theta}   
\newcommand{\rc}[2]{\rcb(#1,#2)} 
\newcommand{\ercb}{\rcb^*} 
\newcommand{\erc}[2]{\ercb(#1,#2)} 
\newcommand{\erctb}{\ercb_3}   
\newcommand{\erct}[3]{\erctb(#1,#2,#3)} 
\newcommand{\fin}[1]{\overline{#1}} 
\newcommand{\rr}{\curvearrowright} 
\newcommand{\rf}{\rr^*\supset} 
\newcommand{\strrf}{\rr^+\supset} 

\begin{document}

\begin{center}
\large \textbf{Computing least common multiples in monoids with a finite Garside family}\\
\vspace{0.05\textheight}
\normalsize Emir Melliti\\
\small\textit{Institut de Mathématiques de Jussieu - Paris Rive Gauche}\\
\textit{École Normale Supérieure - PSL}\\
\textit{emir.melliti@ens.psl.eu}
\vspace{0.05\textheight}
\small
\begin{center}
\begin{minipage}{0.85\textwidth}
\small

\textbf{Abstract.} Right-reversing is an algorithm used to compute least common multiples in monoids that admit a right-complemented presentation. The algorithm can either terminate and find a result, fail, or run indefinitely. The correctness of the algorithm can be proved with additional assumptions coming from Garside theory. In the same framework, we prove that a non-terminating run of the algorithm is necessarily cyclic. Stopping when a cycle is detected provides a way of computing a minimal Garside family.
\end{minipage}

\end{center}
  
\end{center}
\.\\
\textit{Keywords}: Garside theory; Artin groups; Decision problems on monoids.\\\\
Mathematics Subject Classification : 20F05, 20F10, 20F36, 20F55, 20M05, 20M13

\section{Introduction}
    \subsection{Context}
    A right-complemented presentation of a monoid is a presentation $\pres{S}{R}^+$ where the relations in $R$ have the form $s\rcb(s,t) = t\rcb(t,s)$, where $\rcb$ is a partial map from $S\times S$ to the free monoid $S^*$ . Introduced in \cite{Dehornoy1994BraidGA}, the main example for which this notion was defined was to study braid groups and monoids, and more generally Artin-Tits groups and monoids. We recall the definition of Artin-Tits monoids:
    \begin{definition}
        A \defit{Coxeter matrix} $\mathcal{M}$ associated with a finite set of generators $S$ is a symmetric square matrix whose entries, $m_{a,b}$ for all $a,b\in S$, are integers satisfying $m_{a,b} \ge 2$ or $m_{a,b} =\infty$ for all $a \ne b \in S$ and $m_{a,a}=1$ for $a\in S$. We often represent a Coxeter matrix $M$ with its associated \defit{Coxeter graph} $\Gamma=\Gamma_{\mathcal{M}}$, an undirected graph whose vertices are the elements of $S$ and two generators $a,b$ are linked by an edge if $m_{a,b}\ge 3$ and the edge is labelled by $m_{a,b}$ if $m_{a,b} \ge 4$.
    \end{definition}\par
    With a Coxeter matrix $\mathcal{M}$, we will associate a set of relations that will define the \defit{Artin-Tits} monoid of type $\mathcal{M}$.
    \begin{definition}
        The Artin-Tits monoid $M=M_{\Gamma}=M_{\mathcal{M}}$ associated with a Coxeter matrix $\mathcal{M}$ (and a Coxeter graph $\Gamma$) is the monoid with the presentation $\pres{S}{R}^+$, where $R$ is the set of relations of the form:
        \[
        \underbrace{aba\ldots}_{m_{a,b} \text{ factors}} = \underbrace{bab\ldots}_{m_{a,b} \text{ factors}}
        \]
        for all $a, b \in S$ such that $a \neq b$ and $m_{a,b} \ne \infty$.
        If we add the relation $a^2=1$ for all $a\in S$, the monoid given by this presentation is a group, $W$, and is called the \defit{Coxeter group} associated with $\mathcal{M}$. Then the Coxeter matrix (resp. the Artin-Tits monoid) is said to be of \defit{spherical type} if $W$ is finite.
    \end{definition}
    \begin{example}
            The Artin-Tits monoid of spherical type $B_3$ has Coxeter diagram \begin{tikzcd}
                a && c && b
                \arrow[no head, from=1-3, to=1-5]
                \arrow["4", no head, from=1-1, to=1-3]
            \end{tikzcd}
            which corresponds to the presentation $\pres{\{a,b,c\}}{acac=caca, cbc=bcb, ab=ba}^+$
    \end{example}
    A right-complemented presentation allows one to define a process called right-reversing \cite{dehornoy1992deux}: starting with a signed word $w \in (S\cup S^{-1})^*$, representing an element of the enveloping group of the monoid, one can try to write it as a fraction $u\fin{v}$ (where $\fin{v}$ is the notation for $v^{-1}$) with $u,v \in S^*$, by \enquote{pushing} all the negative letters to the right. Such writing is also called a \enquote{positive-negative path}. In practice, we replace a subword $\fin{s}t$ by $\rcb(s,t)\fin{\rcb(t,s)}$. Right-reversing is formally not an algorithm, it is a rewriting system, but since it is confluent, one can derive an algorithm from it (see \cite{Baader_Nipkow_1998} for the theory of rewriting systems). However, neither termination nor correctness of this algorithm is guaranteed in general. Such an algorithm for writing elements of the enveloping group of a monoid where the word problem is decidable would give a solution to the word problem in the group. This can be applied to the well-studied class of monoids called Garside monoids, introduced by Dehornoy and Paris in \cite{dehornoy_gaussian_1999}. That class includes the spherical type Artin-Tits monoids, where right-reversing behaves well. Multiple issues arise when trying to extend the use of right-reversing to the larger class of monoids with a \emph{finite Garside family}, which includes all Artin-Tits monoids \cite{dehornoy_garside_2015}.\par
    Another use of the right-reversing algorithm is to compute least common multiple. Starting with a word $\fin{u}v$, if right-reversing ends on a fraction $v'\fin{u'}$, then the element represented by $uv'=vu'$ is a common right-multiple of $u$ and $v$. Computing common right-multiples is crucial, for example in multifraction reduction \cite{dehornoy_multifraction_2017}, which is a new approach to the word problem in Artin-Tits groups in general. The computability of right-lcms with right-reversing was already known for a fixed monoid : if a bound on the length of the elements of a Garside family is given, one can write a correct algorithm to compute lcms, by computing the maximal number of right-reversing steps for finite computations. This is the argument for the computability of right-lcms in \cite{dehornoy_multifraction_2017}, Lemma 5.13.  However, we are not aware of any algorithm to compute this bound in general. This is a classic problem in computability theory, known as \emph{non-constructive algorithm proof}. The existence of the algorithm relies on the existence of a bound that is unkown. For Artin-Tits monoids, the existence of a finite Garside family \cite{dehornoy_garside_2015} is a consequence of the existence of finitely many \emph{low elements} in any Coxeter group, and we do not know if one can derive an algorithm from the proof \cite{dyer_small_2016}.\par
    In the case of Garside monoids, least common multiples always exist, and right-reversing detects them. In a more general framework, additional assumptions are needed. The correctness of computing least common multiples with right-reversing relies on the notion of completeness, for which different criteria were developed such as the $\theta$-cube condition. (see. \cite{dehornoy_groups_1997} and II.4.4 in \cite{dehornoy_foundations_2015}). However, termination is also tricky. After discussing some special cases where right-reversing is complete, Section II.4.3 in \cite{dehornoy_foundations_2015} ends with the following trichotomy when right-reversing is complete:
    \begin{enumerate}
        \item either right-reversing leads in finitely many steps to a positive-negative path;
        \item or one gets stuck at some step with a pattern $\fin{s}t$ such that $\rcb(s,t)$ and $\rcb(t,s)$ do not exist;
        \item or right-reversing can be extended endlessly reaching neither of the above two cases.
    \end{enumerate}\par
    Thus, even when right-reversing is complete, it does not provide an algorithm to decide if two elements have a common right-multiple, it only computes it when it exists. In practice, a possible  implementation is to just stop the algorithm after a large number of steps, and this is actually correct when a finite Garside family exists if the maximum number of steps is large enough, for the reason explained above.\par
    The easy case where right-reversing is terminating is when the signed word is decreasing in length at each step, which means that $\rcb(s,t)$ has length $0$ or $1$ for all $s,t \in S$. In that case, the right-complemented presentation is called \emph{short}. The notion of Garside family (also called set of basic elements) naturally arises in this context. The idea is to add $\rcb(s,t)$ and the lcm as new generators, for all $s,t$ successively to increase the generating set until it is closed under right-lcm and right-complement. Such a generating set is called a Garside family. It is a generalization of the notion of Garside monoid in the following sense: in a Garside monoid, generators share a common right-multiple called the Garside element, usually denoted by $\Delta$. Then the Garside family is the set of divisors of this element, i.e. the interval $[1,\Delta]$. The process of closing the atom set under right-lcm and right-complement ends in a finite number of steps only when there exists a finite Garside family, which is non-trivially the case for Artin-Tits monoids \cite{dehornoy_garside_2015}. Now, given a finite Garside family, one can compute in the monoid (as in Section 4 of \cite{dehornoy_algorithms_2014}). However, there was no algorithm computing least common multiples with only the assumption of the existence of a finite Garside family. Moreover, the natural algorithm that computes the smallest Garside family by closing the atom set under right-lcm and right-complement (\cite{dehornoy_algorithms_2014}, algorithm 2.23) may fail to terminate even if the smallest Garside family is finite because right-reversing can never terminate. So we might want to have a better understanding of what happens when right-reversing does not end.\par
    For example, when trying to close the atom set $\{a,b,c\}$ under right-lcm and right-complement in the Artin-Tits monoid of type $\widetilde{A_2} = \pres{\{a,b,c\}}{aba=bab, bcb=cbc, cac=aca}^+$ , the right-reversing of the word $\fin{bc}a$ never ends. Looking closely at what happens, the signed word $\fin{bc}a$ is reversed in three steps to the signed word $ac\fin{a}bc\fin{bacb}$. The subword $\fin{a}bc$ is then right-reversed to the signed word $bacb\fin{bc}a\fin{ac}$. Thus, the word $\fin{bc}a$ reappears in its own right-reversing. Inspired by the proof of the Kürzungsregeln \cite{brieskorn_artin-gruppen_1972}, we can conclude here that $bc$ and $a$ have no common right multiple. If we call such a pair "periodic", the natural question is to ask if infinite right-reversing is always caused by a periodic pair.
    \subsection{Results}
    Our main result answers this question in the classic setup where we assume that right-reversing is complete and there exists a finite Garside family. A pair $(u,v)$ is \emph{failing} if right-reversing of $\fin{u}v$ stops because of an undefined right-complement, and it is \emph{eventually periodic} if some periodic subword appears in the process.
        \begin{theorem}\label{theo:trichotomy_intro}
            Let $\pres{S}{R}^+$ be a right-complemented presentation of a monoid $M$ for which right-reversing is complete. We assume that $M$ has a finite Garside family $G$. Let $u, v$ be two words of $S^*$. Then one of the following holds:
            \begin{enumerate}
                \item\label{theo:trichotomy1_intro} $(u,v)$ is failing
                \item\label{theo:trichotomy2_intro} $(u,v)$ is eventually periodic
                \item\label{theo:trichotomy3_intro} $\erc{u}{v}$ is defined, i.e. there exist some positive words $u',v'$ on $S$ such that $\fin{u}v \rr^* v'\fin{u'}$.
            \end{enumerate}
        \end{theorem}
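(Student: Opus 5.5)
We argue by contraposition: assume $(u,v)$ is neither failing nor does $\fin{u}v$ reverse in finitely many steps to a positive-negative path, so the (confluent) right-reversing of $\fin{u}v$ runs forever, and we produce a periodic subword. The plan has three steps. First, replace the atom presentation by the finite Garside family $G$. Second, use finiteness to find a repeated pattern. Third, transport that repetition back to $S$-words.

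\textbf{Step 1 (normalising the infinite run).} Completeness of right-reversing and the existence of the finite Garside family $G$ give the following. If two elements $x,y\in G$ admit a common right-multiple, then their right-lcm and right-complements again lie in $G$ (closure of $G$ under right-lcm and right-complement). Moreover, in that case reversing $\fin{x}v$ for words $x,v$ representing such elements terminates. This follows because completeness says reversal detects common multiples, and confluence then gives termination.

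Write $u=s_1\cdots s_p$. The reversing of $\fin{u}v$ decomposes grid-wise into the reversings of $\fin{s_i}w_i$, where $w_i$ is the positive word produced at the previous stage. Since the whole run is infinite and not failing, some elementary block $\fin{s}w$, with $s\in S$ and $w$ positive, must have an infinite reversal. If each block were finite or failing, the grid argument would yield termination or failure.

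\textbf{Step 2 (locating a repeated pattern).} Split $w$ into $G$-factors. Its maximal prefix that admits a common right-multiple with $s$ is reversed in finitely many steps, and produces a new letter-word representing an element of $G$. This comes from the right-complement of an element of $G$ by $s$, which lies in $G$.

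So the infinite run can be described as a sequence of pairs $(g_k,h_k)$, where $g_k$ is a divisor-type element of $G$ and $h_k$ is the next $S$-letter or $G$-factor to absorb. Each pair appears as a subword $\fin{x_k}y_k$ in the run, with $x_k,y_k$ words representing elements of $G$ (for instance their reversing-produced representatives).

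Because $G$ is finite, these pairs range over a finite set. Hence some pair $(g,h)$ must recur, with $\fin{x}y$ nested inside its own reversal. To keep the words themselves from varying, we choose canonical representatives: reversal from a fixed word is deterministic up to confluence. This is exactly the definition of an eventually periodic pair.

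\textbf{Step 3 (main obstacle).} The delicate point is that recurrence of elements does not immediately give recurrence of the same word as a subword. Different words represent the same element, and in the $\widetilde{A_2}$ example the periodic subword had to reappear literally. I would handle this in two parts. First, prove that reversing words whose pairs are equivalent (equal in $M$) gives equivalent outcomes, in particular equal termination behaviour; this uses completeness. Second, reformulate periodicity up to word equivalence, or show that the infinite branch can be chosen so that the letters fed to the block come from a fixed finite set of words, namely the outputs of reversal from $S$-letters, which are finitely many. Then use a pigeonhole/König-type argument on the infinite nested chain of blocks to obtain a literal repetition.
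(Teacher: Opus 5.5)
\textbf{There is a genuine gap at the heart of your argument.} Your Steps 1 and 2 do essentially reduce the problem to a single pair $(g,h)$, with $g,h$ words representing elements of $G$, whose reversal is infinite and not failing. This plays the role of the paper's induction on $G$-length in Lemma \ref{lemm:trichotomy_ind}. The trouble is the next sentence: ``because $G$ is finite, some pair must recur, with $\fin{x}y$ nested inside its own reversal.'' This is the core of the theorem, and you assert it rather than prove it.

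Finiteness of $G$ gives you recurrence, but not nesting. Three things go wrong:
\begin{itemize}
\item The pairs $(g_k,h_{k+1})$ you meet while sweeping along the $G$-factors of $w$ are finitely many and sequential. A later pair only appears after the earlier one has been completely reversed, so a repetition among them says nothing about periodicity.
\item Two occurrences of the same subword at different places in the run need not be related by $\strrf$.
\item Iterating your own procedure on the bad pair can stall. If $g=s\in S$, Step 1 returns the block $\fin{s}h$ itself. Step 2 then returns $(s,h)$ again, because the maximal good prefix of the single factor $h$ is empty. No reversing step has been performed, so the ``recurrence'' you get is only $(s,h)\rf(s,h)$ with $n=0$, not $(s,h)\strrf(s,h)$.
\end{itemize}

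\textbf{What is missing is the paper's key step (Lemma \ref{lemm:trichotomy_gar}):} every bad pair in $G\times G$ (bad meaning neither failing nor with $\ercb$ defined) has a \emph{strict} reduction factor that is again a bad pair in $G\times G$.
\begin{itemize}
\item Split off the first letters of \emph{both} words, $x=x_1x_2$ and $y=y_1y_2$, and perform $\fin{x}y\rr^1\fin{x_2}\rc{x_1}{y_1}\fin{\rc{y_1}{x_1}}y_2$. This one step is what guarantees strictness.
\item Then either $(x_2,\rc{x_1}{y_1})$ or $(\rc{y_1}{x_1},y_2)$ is bad, or both reverse completely. In the latter case, write $\fin{x_2}\rc{x_1}{y_1}\rr^* y'\fin{x'}$ and $\fin{\rc{y_1}{x_1}}y_2\rr^* y''\fin{x''}$. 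The middle pair $(x',y'')$ must then be bad, since otherwise $\fin{x}y$ would reverse to a positive--negative word.
\item Failing is excluded by Lemma \ref{lemm:trichotomy_rf}.
\item All these pairs stay in $G\times G$, because $G$ is closed under right-divisors and right-complements.
\end{itemize}
Iterating gives an infinite $\strrf$-chain inside a finite set. This forces a literal repetition, i.e.\ a periodic pair.

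\textbf{On your Step 3.} Reformulating periodicity up to $\meq$ would prove a different statement from the one defined. The paper instead runs the pigeonhole argument directly on pairs of \emph{words} whose components represent elements of $G$: suffixes, syntactic complements, and reversal outputs. This yields a literal repetition as soon as such words are finitely many. That holds, for instance, when $M$ is noetherian, by Lemma \ref{lemm:mixed_noetherian} and König's lemma; the paper uses this tacitly.
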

    In other word, we show that if right-reversing can be extended endlessly, then there is necessarily a cyclic pattern that can be detected. Stopping when a cycle is detected provides an algorithm for computing right-lcms in the right-complemented case based on right-reversing that does not need a bound on the length of the element in the minimal Garside family. This shows that right lcms is not computable only on a fixed monoid with a Garside family, it is computable given the presentation of a monoid with a finite Garside family as an input. It also allows one to write an algorithm for multifraction reduction without knowing a Garside family of the monoid \cite{dehornoy_multifraction_2017}. Moreover, this algorithm then provides a way of computing a minimal Garside family when it is finite. We implemented it and computed the Garside families for affine type Artin-Tits monoids $\widetilde{G_2}$ and $\widetilde{D_4}$.\par
    The paper is organized as follows. We begin with some preliminaries on arithmetics in monoids, with the notion of gcds, lcms, and noetherianity, based on \cite{dehornoy_foundations_2015}, II.2. Then, we present how computation can be done in a monoid given by generators and relations, introducing a new algorithm for lcms, inspired from the Kürzungsregeln of \cite{brieskorn_artin-gruppen_1972}, and we point out the similarity with the right-reversing algorithm of \cite{dehornoy_foundations_2015}. This new point of view reveals the notion of cyclic witness that can be detected and stop infinite computations. After recalling the notion of completeness, we formalize this notion of witness by introducing periodicity and eventual periodicity of right-reversing. We show that any infinite right-reversing is eventually periodic. We conclude with the main consequence : computability of the smallest Garside family when it exists. We also present computation results obtained with the implementation that can be found on the GitHub repository \url{https://github.com/domyrmininon/compute_gf}.
    \subsection{Thanks}
        This reflection arose from a master thesis initially focused on multifraction reduction theory of Dehornoy \cite{dehornoy_multifraction_2017} under the supervision of Yves Guiraud, whom I would like to thank. His reading recommendations were invaluable, and our weekly discussions, helped me formalize and clarify my thoughts, especially on the notion of rewriting system.
\section{Arithmetics in monoids}
    We start by describing the framework of right reduction. First with purely algebraic properties that will then be transposed into algorithmic results. The results of this section can be found in \cite{dehornoy_foundations_2015}, II.2. The idea is to have properties in the monoid that are analogous to the divisibility relation in the integers.
    \subsection{Divisibility relations in monoids}
        First, we recall the notion of cancellativity.
        \begin{definition}
            A monoid $M$ is \defit{left-cancellative} if, for all $a,b,c \in M$, $ca=cb$ implies $a=b$. It is \defit{right-cancellative} if, for all $a,b,c \in M$, $ac=bc$ implies $a=b$. It is \defit{cancellative} if it is both left and right-cancellative.
        \end{definition}
        The non-commutative law on monoids leads us to distinguish a left and a right divisibility relation.
        \begin{definition}
            Let $a,b \in M$. We say that $a$ is a \defit{left-divisor} of $b$, or equivalently that $b$ is a \defit{right-multiple} of $a$, if there exists $c \in M$ such that $b = ac$, and we write $a \ldiv b$.\\
            We say that $a$ is a \defit{right-divisor} of $b$, or equivalently that $b$ is a \defit{left-multiple} of $a$, if there exists $c \in M$ such that $b = ca$, and we write $a \ldivt b$.\\
            We write $\strldiv$ (resp. $\strldivt$) for the \defit{proper} left (resp. right) divisibility relation, that is, $ax = b$ (resp. xa = b) with $x$ non invertible.\\
            We denote by $\ldivset(a)$ (resp. $\rdivset(a)$) the set of left-divisors (resp. right-divisors) of $a$.
        \end{definition}
        The existence of a neutral element (resp. the associativity) in the monoid makes the relation $\ldiv$ and $\ldivt$ reflexive (resp. transitive), so they are both pre-orders on $M$. If $a$ and $b$ are such that $a \ldiv b$ and $b \ldiv a$, then there exist $c_1$, $c_2$ such that $b = ac_1$ and $a = bc_2$, and we can write $a = ac_1c_2$. If $M$ is left-cancellative, we can cancel $a$ and get $c_1c_2 = 1$, so $c_1$ and $c_2$ are invertible. If furthermore $M$ has no nontrivial invertible elements, we can conclude that $c_1 = c_2 = 1$, and we have $a = b$, so $\ldiv$ is a partial order. And similarly for $\ldivt$. We also have that $a\ldiv b$ implies $xa \ldiv xb$, and the converse is true if $M$ is left-cancellative, and similarly for $\ldivt$.\\
        Combining the two relations, we get the following definition.
        \begin{definition}
            Let $a,b \in M$. We say that $a$ is a \defit{factor} of $b$, if there exist $c,d \in M$ such that $cad=b$, and we write $a \factor b$. It is a \defit{proper factor} if at least one of $c$ and $d$ is non invertible, written $a \strfactor b$.
        \end{definition}
        Note that if $M$ has no nontrivial invertible elements, then $a \strfactor b$ (resp $a \strldiv b$ , resp. $a \strldivt b$) if and only if $a \factor b$ (resp. $a \ldiv b$, resp. $a \ldivt b$) and $a \ne b$.
    \subsection{Gcds and lcms}    
        \begin{definition}
            Let $M$ be a monoid, and $a,b,c \in M$. We say that $c$ is a \defit{left greatest common divisor} (left-gcd) of $a$ and $b$, if $c\ldiv a$ and $c\ldiv b$, and for all $d \in M$ such that $d\ldiv a$ and $d\ldiv b$, we have $d \ldiv c$.\\
            If $M$ is left-cancellative and has no nontrivial invertible elements, then the left-gcd is unique when it exists, as it is a greatest lower bound of $a$ and $b$ for the order $\ldiv$. We denote it by $c = a \gcd b$. Similarly, when $M$ is right-cancellative, we denote by $a \gcdt b$ the right-gcd of $a$ and $b$.
        \end{definition}
        This is enough to define gcd-monoids. However, the counterpart of gcd, namely, least common multiple will play a very important role in the theory of gcd-monoids. 
        \begin{definition}
            We say that $c$ is a \defit{right least common multiple} (right-lcm) of $a$ and $b$, if $a \ldiv c$ and $b \ldiv c$, and for all $d \in M$ such that $a \ldiv d$ and $b \ldiv d$, we have $c \ldiv d$. For the same reason, if $M$ is left-cancellative and has no nontrivial invertible elements, the right-lcm is unique when it exists, and we denote it by $c = a \lcm b$. Similarly, when $M$ is right-cancellative, we denote by $a \lcmt b$ the left-lcm of $a$ and $b$.
        \end{definition}
        When a right-lcm of $a$ and $b$ exists, we can write it $a \lcm b = aa'=bb'$. Then $a'$ (resp. $b'$) is called the \defit{right-complement} of $a$ in $b$ (resp. of $b$ in $a$), denoted by $(a\backslash b)$ (resp. $(b\backslash a)$), so we have $a \lcm b = a(a\backslash b) = b(b\backslash a)$.\\
        As the name suggests, gcds will always exist in gcd-monoids, but lcms may not exist. However, a weaker condition will be satisfied.
        \begin{definition}
            A cancellative monoid $M$ is said to have \defit{conditional right-lcms}, (resp. \defit{conditional left-lcms}) if for all $a,b \in M$, if $a$ and $b$ have a common right (resp. left) multiple, then they have a right-lcm (resp.  left-lcm). We say that $M$ has \defit{conditional lcms} if it has both conditional right-lcms and conditional left-lcms.
        \end{definition}
        Now, we can define the notion of gcd-monoid.
        \begin{definition}
            A \defit{gcd-monoid} is a monoid $M$ such that:
            \begin{enumerate}
                \item $M$ is cancellative,
                \item $M$ has no nontrivial invertible elements,
                \item For all $a,b \in M$, $a$ and $b$ have a left-gcd and a right-gcd.
            \end{enumerate}
        \end{definition}
        As said above, gcd-monoids have conditional lcms. The following properties have their equivalent when replacing left by right and vice versa.
        \begin{proposition}\label{prop:conditional_lcm}
            Let $M$ be a gcd-monoid. Then $M$ has conditional lcms.
        \end{proposition}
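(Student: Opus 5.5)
The standard argument obtains the lcm as a gcd on the opposite side. I treat conditional right-lcms; conditional left-lcms follow symmetrically by exchanging left and right and using left-gcds.

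Let $a,b\in M$ have a common right-multiple, say $m = aa_1 = bb_1$. The candidate right-lcm is built from right-gcds of the complements:
\[
c_0 = a_1 \gcdt b_1, \qquad a_1 = a' c_0, \qquad b_1 = b' c_0, \qquad c = aa' .
\]
Right-cancelling $c_0$ in $aa'c_0 = bb'c_0$ gives $aa' = bb'$, so $c$ is a common right-multiple of $a$ and $b$. This step alone does not prove minimality, since $m$ was arbitrary. I will instead define the lcm directly as the right-gcd, over all common right-multiples, of the corresponding "co-quotients".

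More precisely, I proceed as follows.

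\textbf{Step 1.} Fix one common right-multiple $m$. Consider the set $\Sigma$ of all common right-multiples $d$ of $a$ and $b$ with $d \le m$. For each $d \in \Sigma$, write $m = d\,e_d$; the elements $e_d$ are right-divisors of $m$.

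\textbf{Step 2.} The lcm should be $m$ divided on the right by the right-lcm of the $e_d$. Since only right-gcds are available, I will use this duality: in a cancellative monoid, $d \le d'$ with $d,d' \le m$ iff $e_{d'} \ldivt e_{d}$. So "least $d$" corresponds to "greatest $e_d$" for $\ldivt$. The key claim is that the family $\{e_d\}$ is closed under right-lcm-like combination, which I show as follows.

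\textbf{Step 3 (key step).} Given $d_1, d_2 \in \Sigma$, set $e = e_{d_1} \gcdt e_{d_2}$ and write $m = \delta e$. Then $\delta$ is a left-multiple-compatible element with $d_i \le \delta$. This needs the identification of the left-divisors of $m$ with right-divisors via $m = xy$, and the fact that $d_1,d_2 \le \delta$ because $e \ldivt e_{d_i}$. Then $a \le \delta$ and $b\le \delta$, so $\delta \in \Sigma$ and $\delta \le d_1, d_2$. Finally I must check $\delta \le d_i$: from $e_{d_i} = x_i e$ we get $m = d_i x_i e = \delta e$, so right cancellation gives $\delta = d_i x_i$, that is, $d_i \le \delta$. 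That is the wrong direction, so I need to reconsider.

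The correct dualization is that right-gcds of right-cofactors of $m$ give left-\emph{lcms}, not gcds. To get the greatest cofactor, I should use a right-lcm of cofactors, which is not available. The fix is to avoid needing existence over all $d$. Instead, take $c = a \gcd b$-style reasoning on the other side: for $d\in\Sigma$, let $a_d, b_d$ be the cofactors with $d = a a_d = b b_d$, and take $g = a_{m}\gcdt b_m$... I would then try to show directly that $c = aa'$ from the construction above is a left-divisor of every common right-multiple $d$. The plan for this is as follows. For an arbitrary common multiple $d$, replace $m$ by $m' = $ a left-gcd $m\gcd d$. This is a common right-multiple of $a$ and $b$, since both $a$ and $b$ left-divide $m$ and $d$. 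I then rerun the construction with $m'$ in place of $m$.

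To make the result independent of $m$, I will choose $m$ minimal. Nonexistence of infinite descending chains is not given, so I cannot assume it. Instead I define $c$ to be the $\gcd$-closure of all common multiples, and argue that $c = aa'$ for $m$ replaced by $m\gcd d$ divides $d$. The main obstacle is exactly this independence/minimality: showing that the element obtained by cancelling the right-gcd of the cofactors of $a$ and $b$ in $m$ is the same for every common multiple $m$. I expect to handle it by showing that, for $m' \le m$, the reduced element computed from $m$ equals that computed from $m'$. This uses cancellativity and the universal property of $\gcdt$: a common right-factor of $a_1,b_1$ can be cancelled, and conversely any common multiple below $m$ yields a common right-factor of $a_1,b_1$. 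Once that is established, $c$ left-divides $m\gcd d \le d$ for every common multiple $d$, so $c$ is the right-lcm.
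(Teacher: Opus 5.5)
Your final strategy is sound, but the proposal stops at the step you yourself call the main obstacle. The invariance $c(m)=c(m')$ for common right-multiples $m'\ldiv m$ is only announced. Here $c(m)=aa'$ is obtained by cancelling $a_1\gcdt b_1$ on the right of $m=aa_1=bb_1$.

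What your sketch yields is the following. Write $m=m'x$ and $m'=aa_1'=bb_1'$; left-cancellation gives $a_1=a_1'x$ and $b_1=b_1'x$, so $x\ldivt a_1\gcdt b_1$. That alone does not give equality of the reduced elements. You need the exact identity $a_1\gcdt b_1=(a_1'\gcdt b_1')\,x$, i.e.\ that right-gcds commute with right multiplication.

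This is short to prove:
\begin{enumerate}
\item $x$ right-divides $a_1'x$ and $b_1'x$, hence right-divides $k=a_1\gcdt b_1$; say $k=k'x$.
\item Right-cancelling $x$ shows that $k'$ is a common right-divisor of $a_1'$ and $b_1'$, so $k\ldivt (a_1'\gcdt b_1')x$.
\item The reverse relation $(a_1'\gcdt b_1')x\ldivt k$ is clear, and antisymmetry of $\ldivt$ (cancellativity, no nontrivial invertibles) gives equality.
\end{enumerate}
Then $c(m)\,(a_1'\gcdt b_1')\,x=m=m'x=c(m')\,(a_1'\gcdt b_1')\,x$, and right-cancellation gives $c(m)=c(m')$. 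Apply this to $m'=m\gcd d$, which is a common right-multiple of $a$ and $b$ as you note. It yields $c(m)\ldiv m\gcd d\ldiv d$ for every common right-multiple $d$, which finishes the proof.

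With that step filled in, your argument is essentially the paper's, with the two operations done in the opposite order. The paper first cancels $c\gcdt d$ from a common multiple $ad=bc$. Then, in Lemma~\ref{lemm:conditional_lcm}, it compares the result $m$ with an arbitrary common multiple $m'$ via $m=(m\gcd m')x$. There $x$ is a common right-divisor of two right-coprime cofactors, hence $x=1$. The paper also silently uses the same commutation fact when it asserts $c'\gcdt d'=1$. You are right that no minimal common multiple can be chosen, since noetherianity is not assumed.

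You should delete Steps~1--3 and the sentence defining $c$ as a ``$\gcd$-closure of all common multiples''. In Step~3, as you noticed, one gets $d_i\ldiv\delta$, so $\delta$ is an upper bound of $d_1,d_2$ and useless for finding a least multiple; the right tool there is the left-gcd, which is what your final plan uses. Also, gcds of infinitely many elements are not available a priori in a gcd-monoid. Your final argument needs neither, since a single fixed $m$ suffices.
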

        First, we need a small lemma.
        \begin{lemma}\label{lemm:conditional_lcm}
            If $M$ is a gcd-monoid, then for all $a,b,c,d \in M$ such that $ad=bc$, then $ad$ is a right-lcm of $a$ and $b$ if and only if $c\gcdt d=1$.
        \end{lemma}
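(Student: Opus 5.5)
We prove the two implications separately, using only cancellativity, the absence of nontrivial invertible elements, and the existence of right-gcds (and left-gcds) in $M$. Fix $a,b,c,d$ with $ad=bc$.

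\emph{($\Rightarrow$)} Suppose $ad$ is a right-lcm of $a$ and $b$. Let $e = c\gcdt d$, and write $c = c'e$ and $d = d'e$. Then $ad'e = bc'e$, and right-cancellativity gives $ad' = bc'$. So $ad'$ is a common right-multiple of $a$ and $b$. By minimality of the lcm, $ad \ldiv ad'$, and left-cancellativity yields $d \ldiv d'$, that is, $d'ex = d'$ for some $x$. Cancelling $d'$ on the left gives $ex=1$. Since there are no nontrivial invertible elements, $e=1$.

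\emph{($\Leftarrow$)} Suppose $c\gcdt d=1$, and let $m$ be any common right-multiple of $a$ and $b$, say $m = ax = by$. We must show $ad \ldiv m$. First, $a$ and $b$ have a common right-multiple, so by the conditional right-lcm property we could reduce to comparing with $a\lcm b$. However, that property is Proposition~\ref{prop:conditional_lcm}, which this lemma is meant to prove, so we avoid it.

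Instead, we use left-gcds to build a common multiple below both $ad$ and $m$. Let $g = ad \gcd m$. Since $a\ldiv ad$ and $a\ldiv m$, we have $a\ldiv g$, and likewise $b \ldiv g$. Write $g = a d_1 = b c_1$ and $ad = g z$, so that $d = d_1 z$ by left-cancellation. Similarly $bc = gz = bc_1 z$, so $c = c_1 z$. Hence $z$ is a common right-divisor of $c$ and $d$. The hypothesis $c\gcdt d = 1$ then forces $z \ldivt 1$, hence $z=1$ (no nontrivial invertibles). Therefore $ad = g \ldiv m$, as required, and $ad$ is a right-lcm of $a$ and $b$.

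The key step is choosing $g = ad\gcd m$ rather than appealing to an lcm, which would be circular. After that, everything reduces to cancellation and the trivial-units argument. The symmetric statement for left-lcms follows by exchanging left and right throughout.
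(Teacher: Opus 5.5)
Your proof is correct and follows essentially the same route as the paper: for $(\Rightarrow)$ you cancel a common right-divisor of $c$ and $d$ to get a common multiple below $ad$, and for $(\Leftarrow)$ you take the left-gcd of $ad$ with an arbitrary common multiple and show that the quotient is a common right-divisor of $c$ and $d$, hence trivial. The only difference is cosmetic: you observe directly that $a$ and $b$ left-divide $ad\gcd m$, whereas the paper invokes the compatibility of left-gcds with left multiplication to write $ad'' = ad\gcd ad' = bc\gcd bc' = bc''$.
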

        \begin{proof}
            The proof is illustrated in Figure \ref{fig:lcm_gcd}. First, one can check that the left-gcd operator commutes with multiplication on the left, that is $a\gcd b = c$ if and only if $xa\gcd xb = xc$. Assume that $ad=bc=m$ is a right-lcm of $a$ and $b$. Let $x$ be a common right-divisor of $c$ and $d$, write, $d=d'x$ and $c=c'x$. Then, by cancellativity, $ad'=bc'$ is a common right-multiple of $a$ and $b$, and by definition of a right-lcm, $a \lcm b \ldiv ad' \ldiv ad = a\lcm b$ by assumption. So $ad'=ad$ and $x=1$. Conversely, assume $c\gcdt d=1$. Let $m'=ad'=bc'$ be a common right-multiple of $a$ and $b$. Let $d'' = d \gcd d'$ and $c'' = c \gcd c'$. Then, multiplying by $a$, we get $ad'' = ad \gcd ad' = bc \gcd bc' = bc''$, so we get a new commutative square $ad''=bc''=m''=m\gcd m'$. By the definition of a left-gcd, there exist $x,x'$ such that $m''x=m$ and $m''x'=m'$. The first expression can be rewritten in two ways: $ad''x=ad$ so $x \ldivt d$ and $bc''x=bc$ so $x \ldivt c$. By the definition of a right-gcd, $x=1$, and $m=m''$ so $m\ldiv m'$.
        \end{proof}
        \begin{figure}[ht]
            \begin{center}
                \begin{tikzpicture}\label{figimported:lcm_gcd}
                    \node[scale = 0.5] (P0) at (1, -1) {};
                    \node[scale = 0.5] (P1) at (3, -1) {};
                    \node[scale = 0.5] (P2) at (1, -3) {};
                    \node[scale = 0.5] (P3) at (3, -3) {};
                    \node[scale = 0.5] (P4) at (4, -4) {};
                    \node[scale = 0.5] (P5) at (5, -5) {};
                    \draw[-Stealth] (P0) edge [] node[auto]{$b$} (P1);
                    \draw[-Stealth] (P0) edge [] node[auto]{$a$} (P2);
                    \draw[-Stealth] (P1) edge [swap] node[auto]{$c$} (P3);
                    \draw[-Stealth] (P1) edge [color={rgb,255:red,214;green,92;blue,92}, bend right=-6] node[midway, fill=white]{${c''}$} (P4);
                    \draw[-Stealth] (P1) edge [bend right=-24] node[midway, fill=white]{${c'}$} (P5);
                    \draw[-Stealth] (P2) edge [] node[auto]{$d$} (P3);
                    \draw[-Stealth] (P2) edge [color={rgb,255:red,214;green,92;blue,92}, bend right=6] node[midway, fill=white]{${d''}$} (P4);
                    \draw[-Stealth] (P2) edge [bend right=24] node[midway, fill=white]{${d'}$} (P5);
                    \draw[-Stealth] (P4) edge [color={rgb,255:red,92;green,92;blue,214}] node[midway, fill=white]{$x$} (P3);
                    \draw[-Stealth] (P4) edge [color={rgb,255:red,92;green,92;blue,214}] node[midway, fill=white]{${x'}$} (P5);
                \end{tikzpicture}
            \end{center}
            \caption{Illustration of the proof of Lemma \ref{lemm:conditional_lcm}.}
            \label{fig:lcm_gcd}
        \end{figure}
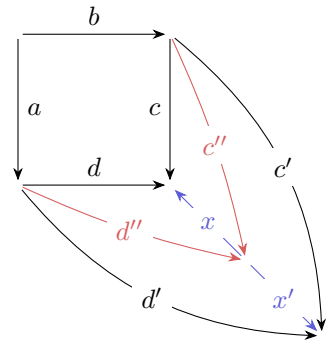
            
        \begin{proof}[ Proof of Proposition \ref{prop:conditional_lcm}]
            Let $ad=bc$ be a common right-multiple of $a$ and $b$. Put $e = c \gcdt d$, write $c=c'e$ and $d=d'e$, so that $c' \gcdt d' = 1$. Then by Lemma \ref{lemm:conditional_lcm}, $ac'=bd'$ is a right-lcm of $a$ and $b$.
        \end{proof}
            The vocabulary here is borrowed from arithmetics. However, the notion of right-lcm (resp. left-lcm) is, in the vocabulary of category theory, called a \defit{pushout} (resp. a \defit{pullback}) in the category with one object and elements of $M$ as morphisms. The usual habits to draw commutative diagram in category theory is very fruitful and works well in the framework of gcd-monoids. This leads to a property analogous to concatenation of pushout diagrams. See Figure \ref{fig:concat_square}.
        \begin{lemma}\label{lemm:concat_lcm}
            If $M$ is a gcd-monoid, then for all $a,b,c \in M$, the right-lcm $a\lcm bc$ exists if and only if $a\lcm b$ and $a'\lcm c$ exist, where $a'=(b\backslash a)$. In this case, we have $a\lcm bc = ab'c' = bca''$ where $b'=(a\backslash b), a''=(c\backslash a')$ and $c'=(a'\backslash c)$.
        \end{lemma}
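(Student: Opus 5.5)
We prove the two directions separately; both rest on the defining property of right-lcms and on cancellativity, and we use Proposition~\ref{prop:conditional_lcm} to turn the existence of a common right-multiple into the existence of an lcm.

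\emph{Sufficiency.} Assume $a\lcm b$ exists, so $a\lcm b = ab' = ba'$ with $b'=(a\backslash b)$ and $a'=(b\backslash a)$. Assume also that $a'\lcm c$ exists, so $a'\lcm c = a'c' = ca''$ with $c'=(a'\backslash c)$ and $a''=(c\backslash a')$. Then
\[
ab'c' = ba'c' = bca''
\]
is a common right-multiple of $a$ and $bc$.

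To show it is the right-lcm, let $m = ax = bcy$ be any common right-multiple. Since $a\ldiv m$ and $b\ldiv m$, the lcm property gives $ba' = a\lcm b \ldiv m$. Write $m = ba'z$. Left-cancelling $b$ in $ba'z = bcy$ gives $a'z = cy$. So $a'\lcm c = a'c' \ldiv a'z$, say $a'z = a'c'w$. Hence $m = ba'c'w$, that is, $bca'' \ldiv m$. This proves $a\lcm bc = ab'c' = bca''$.

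\emph{Necessity.} Assume $a\lcm bc$ exists, say $a\lcm bc = ap = bcq$. This is a common right-multiple of $a$ and $b$, so Proposition~\ref{prop:conditional_lcm} gives that $a\lcm b = ab' = ba'$ exists. As above, $ba' \ldiv bcq$, so $bcq = ba'r$, and cancelling $b$ gives $a'r = cq$. Thus $a'$ and $c$ have a common right-multiple, and Proposition~\ref{prop:conditional_lcm} again gives that $a'\lcm c$ exists. The formula then follows from the sufficiency part, by uniqueness of the right-lcm. Uniqueness holds because $M$ is left-cancellative with no nontrivial invertible elements.

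The only point requiring care is the repeated use of left-cancellation to move from a common multiple of $a$ and $bc$ to a common multiple of $a'$ and $c$. This is where the gcd-monoid hypotheses are used (cancellativity for the cancellation, and conditional lcms via Proposition~\ref{prop:conditional_lcm}); no other step is an obstacle.
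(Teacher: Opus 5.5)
Your proof is correct. It uses the same ingredients as the paper's: Proposition~\ref{prop:conditional_lcm} to produce $a\lcm b$ and $a'\lcm c$, and left-cancellation of $b$ to turn a common right-multiple of $a$ and $bc$ into one of $a'$ and $c$. The organization differs, though.

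The paper starts from $m=a\lcm bc$ and defines $c'$ and $a''$ by $m=ab'c'=bca''$. It then identifies them with the complements in $a'\lcm c=a'c'_1=ca''_1$ through two divisibilities: $c'_1\ldiv c'$ (from $a'\lcm c\ldiv a'c'$) and $c'\ldiv c'_1$ (from $m\ldiv ab'c'_1$). For the converse, it invokes conditional lcms once more to get some $m_1=a\lcm bc$, and then reruns that argument.

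You instead check the universal property of the candidate $ab'c'=ba'c'=bca''$ directly against an arbitrary common multiple $ax=bcy$. As a result, your sufficiency direction, including the formula, needs only left-cancellativity and the existence of the two smaller lcms, with no appeal to Proposition~\ref{prop:conditional_lcm}. Your necessity direction then reduces to an existence statement plus uniqueness of right-lcms. Both routes are equally short; yours shows more clearly which hypotheses each direction actually needs.
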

        \begin{proof}          
                Assume that $m = a\lcm bc$ exists. Then $a$ and $b$ have a common right-multiple $m$, so by Proposition \ref{prop:conditional_lcm}, $a$ and $b$ have a right-lcm $a \lcm b = ab'=ba'$, and it left-divides $m$, so $m = ab'c'$. Moreover, $a''=(bc\backslash a)$ exists by assumption, and $ba'c'= m = bca''$, so cancelling $b$ gives $m'= a'c'=ca''$ so that $a'$ and $c$ have a common right-multiple. Again, they have a right-lcm $a' \lcm c = a'c'_1 = ca''_1$, and it left-divides $m'$, so $m' = a'c'_1x$, and cancelling $a$ gives $c'_1 \ldiv c'$. But then, $ab'c'_1=ba'c'_1= bca''_1$ is a right-multiple of $a$ and $bc$, so $ab'c'=m \ldiv ab'c'_1$, so left-cancelling $ab'$ gives $c' \ldiv c'_1$, hence $c' = c'_1 = (a'\backslash c)$ and $a'' = a''_1=(c\backslash a')$ follows. Conversely, assume that the right-lcm of $a$ and $b$ exists: $a\lcm b=ab'=ba'$, and that the right-lcm of $a'$ and $c$ exists: $a' \lcm c = a'c' = ca''$. Put $m = ab'c'=ba'c'=bca''$. Then $a$ and $bc$ have a common right-multiple $m$, so they have a right-lcm $m_1$. Then, the proof above shows that $m_1 =m$.
        \end{proof}
        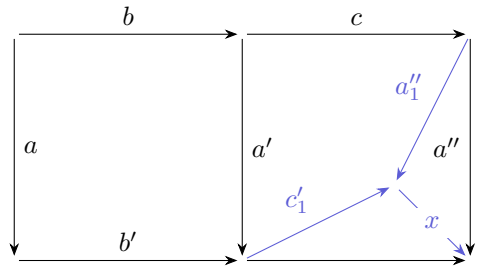
\begin{figure}[ht]
            \begin{center}
                    \begin{tikzpicture}\label{figimported:concat_square}
                        \node[scale = 0.5] (P0) at (1, -1) {};
                        \node[scale = 0.5] (P1) at (4, -1) {};
                        \node[scale = 0.5] (P2) at (7, -1) {};
                        \node[scale = 0.5] (P3) at (6, -3) {};
                        \node[scale = 0.5] (P4) at (1, -4) {};
                        \node[scale = 0.5] (P5) at (4, -4) {};
                        \node[scale = 0.5] (P6) at (7, -4) {};
                        \draw[-Stealth] (P0) edge [] node[auto]{$b$} (P1);
                        \draw[-Stealth] (P0) edge [] node[auto]{$a$} (P4);
                        \draw[-Stealth] (P1) edge [] node[auto]{$c$} (P2);
                        \draw[-Stealth] (P1) edge [] node[auto]{${a'}$} (P5);
                        \draw[-Stealth] (P2) edge [swap, color={rgb,255:red,92;green,92;blue,214}] node[auto]{${a''_1}$} (P3);
                        \draw[-Stealth] (P2) edge [swap] node[auto]{${a''}$} (P6);
                        \draw[-Stealth] (P3) edge [color={rgb,255:red,92;green,92;blue,214}] node[midway, fill=white]{$x$} (P6);
                        \draw[-Stealth] (P4) edge [] node[auto]{${b'}$} (P5);
                        \draw[-Stealth] (P5) edge [color={rgb,255:red,92;green,92;blue,214}] node[auto]{${c'_1}$} (P3);
                        \draw[-Stealth] (P5) edge [] node[auto]{} (P6);
                    \end{tikzpicture}    
            \end{center}
            \caption{Illustration of the proof of Lemma \ref{lemm:concat_lcm}. See the similarity with the concatenation of pushout diagrams in category theory.}
            \label{fig:concat_square}   
        \end{figure}
    \subsection{Noetherianity}
        The existence of gcds does not necessarily means that such gcds can be effectively computed. This is why the additional assumption of noetherianity is needed.
        \begin{definition}
            A monoid $M$ is called \defit{left-noetherian} if the relation $\strldiv$ is well-founded, that is, there is no infinite decreasing sequence for $\strldiv$. It is called \defit{right-noetherian} if the relation $\strldivt$ is well-founded, and \defit{noetherian} if the relation $\strfactor$ is well-founded.
        \end{definition}
        When $M$ is cancellative, the notion of noetherianity can be described by the existence of a noetherianity witness a map from the monoid to the collection of ordinals \textbf{Ord}:
        \begin{definition}
            A map $\lambda:M \to$ \textbf{Ord} is called a \defit{left-noetherianity witness} (resp. \defit{right-noetherianity witness}), if for all $a,b \in M$, $a \strldiv b$ (resp. $a \strldivt b$) implies $\lambda(a) < \lambda(b)$. It is called \defit{sharp} if, additionally, $a\ldiv b$ (resp. $a \ldivt b$) implies $\lambda(a) \le \lambda(b)$.
        \end{definition}
        The following proposition has its right counterpart.
        \begin{proposition}
            Let $M$ be a cancellative monoid. Then the following are equivalent:
            \begin{enumerate}
                \item\label{prop:noetherian} $M$ is left-noetherian.
                \item\label{prop:noetherian_witness} $M$ has a left-noetherianity witness.
                \item\label{prop:noetherian_sharp_witness} $M$ has a sharp left-noetherianity witness.
            \end{enumerate}
        \end{proposition}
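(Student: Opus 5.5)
We prove the cycle of implications \ref{prop:noetherian_sharp_witness} $\Rightarrow$ \ref{prop:noetherian_witness} $\Rightarrow$ \ref{prop:noetherian} $\Rightarrow$ \ref{prop:noetherian_sharp_witness}. The first implication is trivial, since a sharp left-noetherianity witness is in particular a left-noetherianity witness.

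For \ref{prop:noetherian_witness} $\Rightarrow$ \ref{prop:noetherian}, suppose $\lambda$ is a witness and $a_0 \strdiv$-chain, more precisely suppose $\cdots \strldiv a_2 \strldiv a_1 \strldiv a_0$ is an infinite descending sequence. Applying $\lambda$ gives an infinite strictly decreasing sequence of ordinals $\lambda(a_0) > \lambda(a_1) > \cdots$. This contradicts the well-foundedness of $\textbf{Ord}$.

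The substantive direction is \ref{prop:noetherian} $\Rightarrow$ \ref{prop:noetherian_sharp_witness}. Assuming $\strldiv$ is well-founded, I would define $\lambda$ by well-founded recursion along $\strldiv$:
\[
\lambda(b) = \sup\{\lambda(a)+1 \;:\; a \strldiv b\},
\]
with $\lambda(b)=0$ when $b$ has no proper left-divisor. This is the rank function of the well-founded relation, and it is well defined by transfinite recursion because $\strldiv$ is well-founded. It also lands in $\textbf{Ord}$, since for each $b$ the set of $a \strldiv b$ is a set, being a subset of $M$. By construction, $a \strldiv b$ implies $\lambda(a) < \lambda(b)$, so $\lambda$ is a witness.

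It remains to check sharpness, namely that $a \ldiv b$ implies $\lambda(a)\le\lambda(b)$. Write $b = ax$. If $x$ is non-invertible then $a\strldiv b$ and we are done. If $x$ is invertible, then $a$ and $b$ have exactly the same proper left-divisors: $c \strldiv a$ means $cy=a$ with $y$ non-invertible, so $c(yx)=b$. Here $yx$ is non-invertible, since otherwise $y = (yx)x^{-1}$ would be invertible. The converse uses $x^{-1}$ in the same way. Hence the defining sets agree and $\lambda(a)=\lambda(b)$.

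The main obstacle is this invertible case, because $\strldiv$ is defined via non-invertibility of the cofactor rather than via $a\ne b$. This is also the only place where invertibility must be handled. Cancellativity is not really needed here, beyond ensuring that the notions behave as stated. One also uses that $\strldiv$ is transitive, since a product of two elements one of which is non-invertible is non-invertible. I would double-check that this last fact holds in a left-cancellative monoid: if $yz$ is invertible with inverse $w$, then $y(zw)=1$, so $y$ has a right inverse. Left-cancellativity then gives $(zw)y=1$, because $y(zw)y = y$ implies $(zw)y=1$. So $y$ is invertible, and similarly for $z$.
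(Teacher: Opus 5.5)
Your proof is correct, but it is organised differently from the paper's.

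The paper settles (1)$\Leftrightarrow$(2) by citing set theory and spends its effort on (2)$\Rightarrow$(3). Starting from an \emph{arbitrary} witness $\lambda$, it sharpens it by setting $\mu(a)=\min_{u\in M^{\text{inv}}}\lambda(au)$. This makes the witness constant on each class $aM^{\text{inv}}$, and the paper then checks that $a\strldiv b$ still gives $\mu(a)<\mu(b)$.

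You instead go directly from (1) to (3) with the rank function of $\strldiv$. Sharpness comes for free from your observation that $a$ and $ax$, with $x$ invertible, have exactly the same proper left-divisors, so the recursion assigns them the same value.

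The two routes buy different things. Yours is self-contained: it proves the set-theoretic fact the paper only cites. It also shows that the rank function, which lies pointwise below every witness, is automatically sharp. The paper's route is a general procedure that turns any given witness (for instance a concrete length function) into a sharp one without increasing its values, since $\mu\le\lambda$.

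Two small remarks on your write-up:
\begin{itemize}
\item The transitivity of $\strldiv$ that you verify at the end is never used. The implication $a\strldiv b\Rightarrow\lambda(a)<\lambda(b)$ is immediate from the definition of the rank, and your invertible-case argument only needs that a product of invertible elements is invertible.
\item The first sentence of your (2)$\Rightarrow$(1) paragraph is garbled, though the intended argument, via the well-foundedness of the ordinals, is clear and correct.
\end{itemize}
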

        \begin{proof}
            \eqref{prop:noetherian} $\Leftrightarrow$ \eqref{prop:noetherian_witness} is a basic result of set theory (see \cite{levy_basic_1979}).\\
            \eqref{prop:noetherian_sharp_witness} $\Rightarrow$ \eqref{prop:noetherian_witness}: a sharp left-noetherianity witness is by definition a left-noetherianity witness.\\
            \eqref{prop:noetherian_witness} $\Rightarrow$ 
            \eqref{prop:noetherian_sharp_witness}: Let $\lambda$ be the map given by \eqref{prop:noetherian_witness}. Let $M^{\text{inv}}$ be the set of invertible elements of $M$. Let $\mu:M\to$\textbf{Ord} be the map defined by $\mu(a) = \min_{u\in M^{\text{inv}}}(\lambda(au))$. This is well defined since any nonempty set of ordinals has a minimum, and there is an $a'=au_a$ such that $\mu(a)=\lambda(a')$. Let $a \ldiv b$, i.e. $ac = b$ with $c\in M$. If $c$ is non invertible,  then there exists $a'=au_a$ and $b'=bu_b$ such that $\mu(a) = \lambda(a')$ and $\mu(b) = \lambda(b')$. But $a'u_a^{-1}cu_b=acu_b=bu_b=b'$ and $u_a^{-1}cu_b$ is not invertible, so $a' \strldiv b'$, and by assumption \eqref{prop:noetherian_witness}, $\lambda(a') < \lambda(b')$. So $\mu(a) < \mu(b)$. Finally, if $c$ is invertible, then $\mu(a)=\mu(b)$. So $\mu$ is a sharp left-noetherianity witness.            
        \end{proof}
        The notion of witness can be used to define a stronger notion of noetherianity.
        \begin{definition}
            A monoid $M$ is called \defit{strongly noetherian} if there exists a map $\lambda:M \to \N$ such that for all $a,b \in M, \lambda(ab) \ge \lambda(a)+\lambda(b)$ and $\lambda(a) > 0$ for every non invertible element $a\in M.$
        \end{definition}
        Strong noetherianity implies noetherianity, as we simply constrained the values of the witness to be integers instead of any ordinals.\\       Cancellability also allows to identify noetherianity with left and right-noetherianity.
        \begin{proposition}
            Let $M$ be a cancellative monoid. Then $M$ is noetherian if and only if it is right-noetherian and left-noetherian.
        \end{proposition}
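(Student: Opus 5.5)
The forward implication is immediate: since $a \strldiv b$ and $a \strldivt b$ both imply $a \strfactor b$, any infinite descending chain for $\strldiv$ or for $\strldivt$ is also an infinite descending chain for $\strfactor$. So if $M$ is noetherian, it is both left- and right-noetherian. Cancellativity is not needed for this direction.

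For the converse, assume $M$ is left- and right-noetherian, and suppose $(a_n)$ is an infinite sequence with $a_{n+1} \strfactor a_n$ for all $n$. We can write $a_n = c_n a_{n+1} d_n$ with at least one of $c_n, d_n$ non-invertible. Setting $x_n = c_0 c_1 \cdots c_{n-1}$ and $y_n = d_{n-1} \cdots d_0$, we get $a_0 = x_n a_n y_n$ for every $n$.

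Put $b_n = x_n a_n$. Then $b_n \ldivt$-relates to $a_0$ via $a_0 = b_n y_n$, and $b_{n} = x_n a_n = x_n c_n a_{n+1} d_n = b_{n+1}' d_n$, where $b_{n+1}' = x_{n+1} a_{n+1} = b_{n+1}$. So $b_n = b_{n+1} d_n$, which means $b_{n+1} \ldiv b_n$. This inequality is strict exactly when $d_n$ is non-invertible, because left-cancellativity together with $b_n = b_{n+1}d_n$ and $b_{n+1}=b_n e$ gives $d_n e = 1$ and $e d_n=1$ (by the usual argument). So we work with the $\ldiv$-chain $(b_n)$.

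Symmetrically, consider the right side: $a_n = c_n (a_{n+1} d_n)$. Define $e_n = a_n y_n$. Then $e_n = c_n e_{n+1}$, so $e_{n+1} \ldivt e_n$, and this is strict exactly when $c_n$ is non-invertible.

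By left-noetherianity, only finitely many $d_n$ can be non-invertible, otherwise $(b_n)$ would contain an infinite strictly $\strldiv$-decreasing subsequence (after removing repeated steps, which are equalities up to an invertible factor, and using transitivity of strict divisibility composed with invertibles). Likewise, by right-noetherianity, only finitely many $c_n$ are non-invertible. Hence for large $n$ both $c_n$ and $d_n$ are invertible, which contradicts $a_{n+1} \strfactor a_n$.

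The main care point is to check that "strict up to invertibles" composes well: if $b_{n+1} u = b_n$ with $u$ invertible, and $b_{m} \strldiv b_{n+1}$, then $b_m \strldiv b_n$. This holds because the product of a non-invertible element with invertibles is non-invertible in a cancellative monoid, since $xy=1$ implies $yx=1$ there. The same fact is used to pass to a strictly decreasing subsequence.
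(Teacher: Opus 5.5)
Your proof is correct and follows the paper's argument: absorbing the left cofactors into $b_n = c_0\cdots c_{n-1}a_n$ (resp.\ the right cofactors into $e_n = a_n d_{n-1}\cdots d_0$) produces a $\ldiv$-chain (resp.\ $\ldivt$-chain) that is strict at every step where $d_n$ (resp.\ $c_n$) is non-invertible, which is exactly the paper's sequence $a'_i$. The differences are cosmetic. You handle both sides at once and show that each side has only finitely many non-invertible cofactors, whereas the paper pigeonholes onto the side with infinitely many. You also spell out the subsequence extraction, which the paper leaves implicit: an invertible times a non-invertible element is non-invertible, and this holds in any monoid, so cancellativity is not needed for that step.
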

        \begin{proof}
            Assume $\strfactor$ is well-founded. Then any infinite decreasing sequence for $\strldiv$ (resp. $\strldivt$) is also an infinite decreasing sequence for $\strfactor$ so it is not possible and $M$ is left (resp. right) -noetherian. Conversely, assume that $M$ is left and right-noetherian. Let $(a_n)_{n\in \N}$ be a decreasing sequence for $\strfactor$. Then $a_n=d_na_{n+1}c_n$ for all $n$ and at least one of $d_n$ and $c_n$ is not invertible. Then at least one of the sequences $(d_n)_{n\in \N}$ or $(c_n)_{n\in \N}$ contains infinitely many non invertible elements. Assume for example that it is $(c_n)$. Define $a'_i=d_0d_1\ldots d_{i-1}a_i$ for all $i\in \N$. Then $a'_i=a'_{i+1}c'_i$ for all $i$, and since $c'_i$ is not invertible infinitely many times, one can extract a decreasing sequence from $(a'_i)$ for $\strldiv$. Symmetrically, if $(d_n)$ contains infinitely many non invertible elements, we can extract a decreasing sequence from $(a'_i)$ for $\strldivt$. In both cases, we get a contradiction with the left or right-noetherianity of $M$. So such sequence cannot exist and $M$ is noetherian.
        \end{proof}
        Noetherianity allows to characterize gcd-monoids using the conditional lcms property.
        \begin{proposition}\label{prop:conditional_lcm_to_gcd}
            Let $M$ be a cancellative and noetherian monoid. Then $M$ is a gcd-monoid if and only if it has conditional lcms.
        \end{proposition}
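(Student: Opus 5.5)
One direction is already done: if $M$ is a gcd-monoid, then Proposition \ref{prop:conditional_lcm} gives conditional lcms. Noetherianity is not needed for this.

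For the converse, assume $M$ is cancellative, noetherian and has conditional lcms. Noetherianity implies there are no nontrivial invertible elements: if $uv=1$ with $u\neq 1$, then $1=uv$, $u=u(vu)$, $u(vu)^2,\dots$ gives a problem, so we argue directly. If $u$ is invertible and $u\neq1$, then $1 = u u^{-1}$ and $u^{-1}$ is non-invertible in the sense of $\strldiv$? It is not, since $u^{-1}$ is invertible. So triviality of invertibles has to be read off carefully. I expect either that the paper's noetherianity setting already excludes this, or that one must work up to invertibles; I will handle it by working up to invertibles and noting that all arguments go through. Hence it remains to construct left-gcds; right-gcds are symmetric.

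The plan for left-gcds of $a,b$ uses the lcm of all common left-divisors. Let $C=\ldivset(a)\cap\ldivset(b)$, which contains $1$. Any two elements $d_1,d_2\in C$ have the common right-multiple $a$, so by conditional right-lcms $d_1\lcm d_2$ exists. It left-divides both $a$ and $b$ by the lcm property, so it again lies in $C$. Thus $C$ is closed under right-lcm.

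Next, $C$ has a $\ldiv$-maximal element, and this is the step I expect to be the main obstacle, since it is where noetherianity must be used in the opposite direction. Left-noetherianity gives well-foundedness of $\strldiv$, not ascending chains. Left-divisors of $a$ are of the form $d$ with $a=dx$. An ascending chain $d_1\strldiv d_2\strldiv\cdots$ in $\ldivset(a)$, with $a=d_ix_i$, gives $d_{i+1}=d_iy_i$ with $y_i$ non-invertible. Then $x_i=y_ix_{i+1}$ by left cancellation, so $x_{i+1}\strldivt x_i$ is a strictly descending chain for right divisibility, contradicting right-noetherianity. So every nonempty subset of $\ldivset(a)$ has a maximal element. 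Take $c$ maximal in $C$. For any $d\in C$, the lcm $c\lcm d$ lies in $C$ and $c\ldiv c\lcm d$, so by maximality $c\lcm d=c$ up to invertibles, hence $d\ldiv c$. Thus $c$ is a left-gcd.

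The right-gcd follows symmetrically, using conditional left-lcms and left-noetherianity. This shows $M$ is a gcd-monoid.
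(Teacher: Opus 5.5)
Your construction of left-gcds is the paper's argument. The set $C=\ldivset(a)\cap\ldivset(b)$ is closed under right-lcm (Lemma~\ref{lemm:mixed_lcm}). It has a $\strldiv$-maximal element, because an ascending chain of left-divisors of $a$ yields, by left-cancellation, a descending chain of right-divisors (Lemma~\ref{lemm:mixed_noetherian}). A maximal element is then the gcd. Your last step ($c\ldiv c\lcm d\in C$, so by maximality $c\lcm d\ldiv c$, hence $d\ldiv c$) is a slightly more direct version of the paper's ``the maximal element is unique, hence a maximum''. It also has the advantage of working verbatim up to invertible elements.

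The real defect is the paragraph on invertibles: ``Hence it remains to construct left-gcds'' does not follow. Working up to invertibles only produces non-unique gcds and says nothing about condition~(2) in the definition of a gcd-monoid. Your tentative chain $u=u(vu)=u(vu)^2=\cdots$ cannot help either, since cancellation gives $vu=1$.

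In fact condition~(2) cannot be derived from the hypotheses as stated. Take any nontrivial group:
\begin{itemize}
\item it is cancellative;
\item it is vacuously noetherian, because with the paper's definition of proper factor there are no proper factors at all (every element is invertible);
\item it has conditional lcms, because every element is both a right-lcm and a left-lcm of any two elements;
\item yet it is not a gcd-monoid.
\end{itemize}
So the proposition needs ``$M$ has no nontrivial invertible elements'' as a standing hypothesis. The paper's proof uses this tacitly when it concludes $m=c$ and $c=c'$ from maximality.

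You should add that hypothesis explicitly rather than try to extract it from noetherianity. With it, your proof is complete. Without it, your up-to-invertibles argument proves exactly what is true, namely that left- and right-gcds exist.
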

        This is the consequences of the following two lemmas, and their symmetric versions. The first one is a mixed interaction between left-cancellativity and right-noetherianity.
        \begin{lemma}\label{lemm:mixed_noetherian}
            Let $M$ be a left-cancellative and right-noetherian monoid and $a\in M$. Then, the set $\ldivset(a)$ of left-divisors of $a$ contains no infinite increasing sequence for $\strldiv$. In particular, any nonempty subset of $\ldivset(a)$ has a maximal element for $\strldiv$.
        \end{lemma}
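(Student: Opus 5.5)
We argue by contradiction and turn an increasing chain of left-divisors of $a$ into a decreasing chain for $\strldivt$. Suppose $(a_n)_{n\in\N}$ is an infinite sequence in $\ldivset(a)$ with $a_0 \strldiv a_1 \strldiv a_2 \strldiv \cdots$. Since each $a_n$ left-divides $a$, write $a = a_n b_n$ for some $b_n\in M$. Since $a_n \strldiv a_{n+1}$, there is a non-invertible $c_n$ with $a_{n+1} = a_n c_n$.

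First we compare the complements $b_n$. From $a = a_{n+1} b_{n+1} = a_n c_n b_{n+1}$ and $a = a_n b_n$, left-cancellativity gives $b_n = c_n b_{n+1}$. So $b_{n+1}$ right-divides $b_n$, and the cofactor $c_n$ is non-invertible. This is exactly $b_{n+1} \strldivt b_n$ in the sense of the definition of proper right-divisibility.

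Hence $(b_n)_{n\in\N}$ is an infinite decreasing sequence for $\strldivt$. This contradicts the right-noetherianity of $M$, so no infinite increasing sequence for $\strldiv$ exists in $\ldivset(a)$.

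For the ``in particular'' part, let $X \subseteq \ldivset(a)$ be nonempty and suppose it has no maximal element for $\strldiv$. Then every $x\in X$ admits some $x'\in X$ with $x \strldiv x'$. Starting from any $x_0\in X$ and using dependent choice, we build an infinite increasing sequence $x_0 \strldiv x_1 \strldiv \cdots$ inside $\ldivset(a)$, contradicting the first part.

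The only delicate point is the one we made explicit above: the definition of $\strldiv$ requires the cofactor to be non-invertible, and we need that same cofactor $c_n$ to witness $b_{n+1}\strldivt b_n$. No absence of invertibles is assumed, so it is important that left-cancellation transfers exactly $c_n$ and not some other cofactor. Since it does, no hypothesis beyond left-cancellativity and right-noetherianity is needed.
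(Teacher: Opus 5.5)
Your proof is correct and is essentially the paper's argument: you write each $a_n$ as a left-divisor of $a$ with complement $b_n$, then left-cancel to get $b_n = c_n b_{n+1}$ with $c_n$ non-invertible, which produces an infinite descending chain for $\strldivt$ and contradicts right-noetherianity. You also spell out the ``in particular'' clause via dependent choice, which the paper leaves implicit; that is a harmless and correct addition.
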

        \begin{proof}
            Assume by contradiction that we can build an infinite increasing sequence $c_0 \strldiv c_1 \strldiv \ldots$ of elements of $\ldivset(a)$. Then we can write $c_{i+1} = c_i d_i$ for some non invertible element $d_i \in M$. But $c_i \in \ldivset(a)$, so, in particular, there exists $x_i$ such that $c_ix_i = a$. Then, we have $c_id_ix_{i+1}=c_{i+1}x_{i+1}=a=c_{i}x_i$ and by left-cancellativity, $d_ix_{i+1}=x_i$. So $(x_i)$ is a decreasing sequence for $\strldivt$, and this is impossible because $M$ is right-noetherian.
        \end{proof}
        The second lemma is a consequence of having conditional right-lcms.
        \begin{lemma}\label{lemm:mixed_lcm}
            Let $M$ be a monoid with conditional right-lcms. Then for all $a \in M$, and $b,c \in \ldivset(a)$, the right-lcm $b \lcm c$ exists and is in $\ldivset(a)$.
        \end{lemma}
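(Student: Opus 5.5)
The argument is direct: since $b$ and $c$ both left-divide $a$, the element $a$ is itself a common right-multiple of $b$ and $c$, and the conditional right-lcm property then produces $b\lcm c$. It remains to check that this lcm still left-divides $a$, which is immediate from the defining minimality property of a right-lcm.

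In order, the steps are as follows. First, from $b,c\in\ldivset(a)$, write $a=bx=cy$ for some $x,y\in M$. This shows that $b\ldiv a$ and $c\ldiv a$, so $a$ is a common right-multiple of $b$ and $c$. Second, apply the hypothesis that $M$ has conditional right-lcms to the pair $(b,c)$: because they admit a common right-multiple, they admit a right-lcm, which we denote $b\lcm c$. If $M$ has nontrivial invertible elements, uniqueness may fail, and we then take any right-lcm; the argument does not depend on the choice. Third, by the definition of a right-lcm, every $d$ with $b\ldiv d$ and $c\ldiv d$ satisfies $b\lcm c\ldiv d$. Taking $d=a$ gives $b\lcm c\ldiv a$, that is, $b\lcm c\in\ldivset(a)$.

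No step is a genuine obstacle. The only point requiring care is the second one: the definition of conditional right-lcms in the paper is phrased for cancellative monoids. That hypothesis is not used anywhere in the argument, so the conditional right-lcm property is used purely as stated, and left-divisibility is never needed as an antisymmetric relation.
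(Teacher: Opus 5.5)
Your proof is correct and is the same argument as the paper's: $a$ is a common right-multiple of $b$ and $c$, so the conditional right-lcm property gives $b\lcm c$, and the minimality clause in the definition of a right-lcm gives $b\lcm c\ldiv a$. Your side remarks on non-uniqueness and on not using cancellativity are harmless and do not change the argument.
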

        \begin{proof}
            Let $b,c \in \ldivset(a)$. Then $b$ and $c$ have a common right-multiple $a$, so by the conditional right-lcms property, they have a right-lcm $m = b \lcm c$ and it left-divides $a$.
        \end{proof}
        Now we can prove the proposition.
        \begin{proof}[Proof of Proposition \ref{prop:conditional_lcm_to_gcd}]
            We already know that if $M$ is a gcd-monoid, then it has conditional right-lcms by Proposition \ref{prop:conditional_lcm}. Assume now that $M$ has conditional right-lcms. Let $a,b \in M$, and let $\ldivset(a,b)=\ldivset(a) \cap \ldivset(b)$ be the set of common left-divisors of $a$ and $b$. By Lemma \ref{lemm:mixed_noetherian}, $\ldivset(a,b)$ has a maximal element $c$. To prove that it is a maximum, it is enough to show that it is the unique maximal element of $\ldivset(a,b)$. Assume that $c'$ is another maximal element of $\ldivset(a,b)$. Then by Lemma \ref{lemm:mixed_lcm}, $c$ and $c'$ have a right-lcm $m = c \lcm c'$, and $m \in \ldivset(a)\cap \ldivset(b)$. Thus, by maximality of $c$ (resp. $c'$), we have $m=c$ (resp. $m=c'$). So $c=c'$ is the left-gcd of $a$ and $b$. The symmetric argument shows that $M$ has right-gcds.
        \end{proof}
        \begin{remark}
            The previous proposition is the  most common way to prove that a monoid is a gcd-monoid. It is how Artin-Tits monoids are proved to be gcd-monoids at first in \cite{brieskorn_artin-gruppen_1972}, and it applies to some particular class of presented monoids, which is the subject of the next section.
        \end{remark}

\section{Computing in presented monoids}
    \subsection{Monoid presentation}
        We recall the definition of a monoid presentation. In the following, a finite set of generators $S$ is fixed. Elements of $S$ are called \defit{letters} and elements of $S^*$ are called \defit{words}. The empty word is denoted by $\varepsilon$. The length of a word $w$ is denoted by $\wl{w}_S$. 
        \begin{definition}
            A \defit{relation} is a pair $(u,v)$ where $u,v$ are words of $S^*$. A \defit{congruence} $\eqeq$ on a monoid $M$ is an equivalence relation compatible with the product, i.e. satisfying, for all $u,v,w \in M$, if $u\eqeq v$, then $wu\eqeq wv$ and $uw \eqeq vw$. The equivalence class under $\eqeq$ form the quotient monoid $M = S^*{/}{\eqeq}$. If $R$ is a set of relations, there exists a smallest congruence $\eqeq$ containing $R$, that is, such that for all $(u,v) \in R, u\eqeq v$. We say that the quotient monoid $S^*{/}{\eqeq}$ has the presentation $\pres{S}{R}^+$. In the following, we will always consider finite presentations, i.e. both $S$ and $R$ are finite.
        \end{definition}
        \begin{example}
            The monoid $\N^2$ has the presentation $\pres{a,b}{ab=ba}^+$.
        \end{example}
        Matching the notation of \cite{brieskorn_artin-gruppen_1972}, for two words $X,Y \in S^*$ we will note $X\weq Y$ when $X,Y$ are equal as words of $S^*$ and $X \meq Y$ if $X$ and $Y$ represent the same element of $M$, i.e. if $X \eqeq Y$ with $\eqeq$ the smallest congruence that contains $R$. (This relation is written $\equiv_R$ in \cite{dehornoy_foundations_2015}).
        \begin{definition}
            A presentation $\pres{S}{R}^+$ is \defit{homogeneous} if all relation are length preserving, that is, for all $(u,v) \in R, \wl{u}_S = \wl{v}_S$.
        \end{definition}
        \begin{remark}\label{rem:homo_strong}
        The length function $u \mapsto \wl{u}_S$ is stable under $\meq$ when the presentation is homogeneous, so it is well defined on the quotient $M=S^*{/}{\meq}$, hence it gives a strong noetherianity witness for $M$.
        \end{remark}
        We recall the notion of derivation.
        \begin{definition}
            Let $R$ be a set of relations on $S$.\\
            If $X,Y \in S^*$ are words, we will say that $Y$ is \defit{derived} from $X$ in one step, if there exist some words $X_1, X_2, r_1, r_2$ such that $X \weq X_1r_1X_2$, $Y \weq X_1r_2X_2$ and $(r_1,r_2) \in R$ or $(r_2,r_1) \in R$. A word $Y$ is \defit{derived} from $X$ in $n$ steps if there exists a sequence of words $X=W_0,W_1,\ldots,W_n=Y$ such that $W_{i+1}$ is derived from $W_i$ in one step. The relation $X \meq Y$ is in fact the same as "$Y$ is derived from $X$ in $n$ steps for some $n$".
        \end{definition}
        The word problem with respect to a monoid presentation $\pres{S}{R}^+$ is the problem to decide, given to word $w_1,w_2 \in S^*$ if they are equal in the monoid, i.e. if $w_1 \meq w_2$. It is decidable when the presentation is homogeneous.
        \begin{proposition}\label{prop:homo_word}
            Let $\pres{S}{R}^+$ be a homogeneous finite presentation of a monoid $M$. Then the word problem with respect to $\pres{S}{R}^+$ is decidable.
        \end{proposition}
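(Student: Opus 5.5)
The plan is a finite search: since every relation preserves length, any derivation starting from a word stays among words of that same length, and over the finite alphabet $S$ there are only finitely many such words. Given $w_1, w_2 \in S^*$, the procedure first compares lengths. If $\wl{w_1}_S \ne \wl{w_2}_S$, then $w_1 \not\meq w_2$, because by Remark \ref{rem:homo_strong} the length function is invariant under $\meq$. So the procedure answers ``no'' in that case.

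If $\wl{w_1}_S = \wl{w_2}_S = n$, set $S^n$ to be the set of words of length $n$, which is finite of cardinality $|S|^n$. Consider the graph with vertex set $S^n$, with an edge from $X$ to $Y$ when $Y$ is derived from $X$ in one step. One checks that every one-step derivation of a word in $S^n$ lands in $S^n$. Indeed, if $X \weq X_1r_1X_2$ and $Y \weq X_1r_2X_2$ with $(r_1,r_2)$ or $(r_2,r_1)$ in $R$, then $\wl{r_1}_S=\wl{r_2}_S$ by homogeneity, hence $\wl{Y}_S=\wl{X}_S$.

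This graph is finite and effectively computable, since $R$ is finite. For each word $X$, the successors are listed by trying every factorization $X\weq X_1r_1X_2$ and every relation in $R$ in both orientations. By the definition of derivation, $w_1 \meq w_2$ holds exactly when $w_2$ is reachable from $w_1$ in this graph. A breadth-first search from $w_1$ terminates, because it visits at most $|S|^n$ vertices, and it decides reachability. This gives the algorithm.

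There is no real obstacle. The only point needing care is that every intermediate word in an arbitrary derivation has the same length, which follows by induction on the number of steps from the one-step observation above. It is this fact that makes the search space finite and guarantees termination.
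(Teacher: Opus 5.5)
Your proposal is correct and follows essentially the same route as the paper. Homogeneity confines every word equivalent to $w_1$ to the finite set $S^{n}$ with $n=\wl{w_1}_S$, and an exhaustive search over derivations inside that set decides equivalence (your breadth-first search, the paper's check for a derivation of fewer than $|S|^{n}$ steps), with your explicit handling of unequal lengths and of length preservation under one-step derivations making precise what the paper leaves implicit.
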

        \begin{proof}
            Let's show that the $\meq$-classes are finite. Let $w\in S^*$ and $a\in M$ Since $\pres{S}{R}^+$ is homogeneous, any $w'$ that represents $a$ verifies $\wl{w'}_S=\wl{w}_S$. Then, the $\meq$-class of $w$ is included in $S^{\wl{w}_S}$ which is finite. Now, given $w,w'$ such that $\wl{w}_S=\wl{w'}_S=n=p$, one can check, since $R$ is finite if $w'$ is derived from $w$ in $n$ step with some $n< |S|^p$.
        \end{proof}
        Decidability of the word problem in the homogeneous case gives us decidability of the left and right-divisibility relations, because divisor sets can be computed. Gcds can also be computed since the number of divisors is finite. It leaves the problem of lcms.
        
    \subsection{Right-complemented presentation}
        The next definition follows the framework of \cite{dehornoy_foundations_2015} II.4.
        \begin{definition}
            A monoid presentation ${\pres{S}{R}^+}$ is called a \defit{right-complemented presentation} if $R$ contains no $\varepsilon$-relation (that is, no relation $w=\varepsilon$ with $w$ nonempty), no relation $s\ldots = s\ldots$ with $s$ in $S$, and at most one relation $s\ldots = t\ldots$ for $s\ne t$ in $S$. A right-complemented presentation is always associated with a syntactic right-complement, that is the partial map $\rcb$ from $S^2$ to $S^*$ such that $\rc{s}{s}= \varepsilon$ for all $s$ in $S$, and $\rc{s}{t}$ and $\rc{t}{s}$ are defined if and only if there is a relation $s\ldots=t\ldots$ in $R$, and, in this case, the relation is $s\rc{s}{t}=t\rc{t}{s}$. A right-complemented presentation is called \defit{short} if $\rc{s}{t} \in S \cup \{\varepsilon\}$ for all $s,t$ in $S$.
        \end{definition}
        Note that, in light of the previous section, a right-complemented presentation seems to give right least common multiples for elements of $S$. This will be the case for Artin-Tits monoids, but it's not always true. Also, knowing right-lcm for generators does not necessarily means that right-lcm computation is possible for any pair of elements. We investigate this problem in what follows.

    \subsection{Reduction rule}
        The fact that the right-complemented presentation really gives the right-lcm of generators was proved for Artin-Tits monoids by Brieskorn and Saito in \cite{brieskorn_artin-gruppen_1972} (Lemma 2.1). We present this very important statement called the reduction rule (Kürzungsregeln) and some of its useful consequences.
        \begin{definition}[The reduction rule]\label{defi:reduction_rule}
            If $\pres{S}{R}^+$ is a right-complemented presentation associated with the syntactic right-complement $\rcb$, we say that $\pres{S}{R}^+$ satisfies the \defit{reduction rule} if:\\
            Let $X,Y \in S^*$ , and $a,b \in S$ such that $aX \meq bY$. Then, $\rc{a}{b}$ and $\rc{b}{a}$ are defined and there exists a word $W$ such that:
            \[
                X \meq \rc{a}{b}W \hspace{3ex}\text{ and }\hspace{3ex} Y \meq \rc{b}{a}W
            \]
        \end{definition}
        We start by giving some immediate consequences of this rule.
        \begin{lemma}\label{lemm:reduction}
            If $M = \pres{S}{R}^+$ is a right-complemented presentation that satisfies the reduction rule, then:
            \begin{enumerate}
                \item\label{reduc:Slcm} For all $a,b \in S$, $a\rc{a}{b}\meq b\rc{b}{a}$ is the right-lcm of $a$ and $b$ if it is defined, and $a,b$ have no common right-multiple otherwise.
                \item\label{reduc:lcancel} $M$ is left-cancellative
                \item\label{reduc:conlcm} $M$ has conditional right-lcms, i.e. any two elements of $M$ having a common right-multiple have a right-lcm.
            \end{enumerate}
        \end{lemma}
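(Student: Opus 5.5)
Item \eqref{reduc:Slcm} comes straight from the reduction rule; I then prove \eqref{reduc:lcancel} by induction on the length of the cancelled prefix; and I get \eqref{reduc:conlcm} from a noetherian induction that only needs left-cancellativity.

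For \eqref{reduc:Slcm}, take $a,b\in S$. If $a$ and $b$ have a common right-multiple, represented by $aX\meq bY$, the reduction rule says $\rc{a}{b}$ and $\rc{b}{a}$ are defined, and $X\meq\rc{a}{b}W$, $Y\meq\rc{b}{a}W$ for some word $W$. So $aX\meq a\rc{a}{b}W$, and $a\rc{a}{b}$ left-divides every common right-multiple. It is itself a common right-multiple because $a\rc{a}{b}\meq b\rc{b}{a}$ is a relation of $R$ (or is trivial when $a=b$). If $\rc{a}{b}$ is undefined, the contrapositive shows that $a$ and $b$ have no common right-multiple.

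For \eqref{reduc:lcancel}, it suffices to cancel a single letter. Suppose $aX\meq aY$ with $a\in S$. The reduction rule with $b=a$ gives $X\meq\rc{a}{a}W\meq W$ and $Y\meq W$, since $\rc{a}{a}=\varepsilon$. Hence $X\meq Y$, and induction on the length of the left factor gives left-cancellativity.

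For \eqref{reduc:conlcm}, I will run Brieskorn--Saito's induction. The hard part is that we have no noetherianity or homogeneity hypothesis. The plan is to argue by induction on the length of a word representing a common multiple, provided that length is $\meq$-invariant; that holds for homogeneous presentations such as Artin--Tits ones (Remark~\ref{rem:homo_strong}). In general I would instead use well-foundedness of $\strldiv$, and I expect this is where an extra hypothesis (for instance the existence of some noetherianity witness) may have to be added. Concretely, let $u$ and $v$ have a common right-multiple $m$. I prove that $u\lcm v$ exists by induction on $m$, the claim being that $u\lcm v$ exists and left-divides $m$. If $u$ or $v$ is trivial, the result is immediate. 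Otherwise write $u=aX$ and $v=bY$ with $a,b\in S$. By the reduction rule, $m=a\rc{a}{b}W'$, so $a\lcm b=a\rc{a}{b}$ left-divides $m$. I then mimic the concatenation argument of Lemma~\ref{lemm:concat_lcm}, which only uses left-cancellativity and left-divisibility of multiples, applied to single letters against words. Cancelling $a$, the elements $X$ and $\rc{a}{b}$ have a common right-multiple $a^{-1}m$, which is shorter (strictly smaller), so their lcm exists by induction. Likewise for $Y$ and $\rc{b}{a}$. Gluing these squares, and then once more to compare the two resulting multiples of $a\lcm b$, yields a common right-multiple of $u$ and $v$ dividing every other one. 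The main obstacle is making the decreasing induction parameter rigorous: each gluing step must use a multiple that is a proper left-divisor-quotient of $m$. I would organise it as a double induction, on $m$ and on $\wl{u}_S+\wl{v}_S$.
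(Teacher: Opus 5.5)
Items (1) and (2) are exactly the paper's arguments. In (3), your gluing is the paper's Brieskorn--Saito scheme: the induction hypothesis is applied to $(X,\rc{a}{b})$, then to $(Y,\rc{b}{a})$, then to the two resulting complements, which share the multiple $W'$ with $m=a\rc{a}{b}W'=b\rc{b}{a}W'$. Two differences are worth noting. First, the paper inducts on $\wl{uW_0}_S=\wl{vW_1}_S$. That tacitly assumes a homogeneous presentation, although the lemma does not say so; you induct on the common multiple itself. Second, the paper's inductive claim (``some common multiple left-divides $uW_0$'') is vacuous as written; it really rests on the construction being independent of $W_0,W_1$. With your claim ``$u\lcm v$ exists'', minimality is automatic. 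Every common multiple $m'$ equals $a\rc{a}{b}w'$ by the reduction rule, so the sub-lcms, once known to exist, left-divide the corresponding quotients of $m'$.

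The gap is the hypothesis. The elements you recurse on, $a^{-1}m$, $b^{-1}m$ and $W'$, are proper \emph{right}-divisors of $m$. So your induction needs $\strldivt$ to be well-founded (right-noetherianity, as in Proposition~\ref{prop:cube_complete}), not $\strldiv$. This hypothesis is not optional: (3) is false without it, even for left-noetherian monoids. Here is a counterexample.

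Take $\pres{a,b,c,z_1,z_2}{aba=bab,\ bcb=cbc,\ cac=aca,\ sz_i=z_i}^+$ for $s\in\{a,b,c\}$ and $i\in\{1,2\}$, with no relation between $z_1$ and $z_2$. It is right-complemented, with $\rc{s}{z_i}=z_i$ and $\rc{z_i}{s}=\varepsilon$. Every letter of the Artin--Tits monoid $A$ of type $\widetilde{A_2}$ is absorbed by a $z_i$ on its right. Hence the elements are the products $z_{i_1}\cdots z_{i_k}x$ with $k\ge0$ and $x\in A$, and these are pairwise distinct.

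The reduction rule holds:
\begin{itemize}
\item For $s\ne t$ in $\{a,b,c\}$, either $X,Y\in A$ and the rule of $A$ applies, or $X=Y$ lies in the $z$-part and $W=X$ works.
\item $sX=z_iY$ forces $X=z_iY$, and $W=Y$ works.
\item $z_1X=z_2Y$ never happens.
\item Left-cancellation is clear on normal forms.
\end{itemize}
The map $z_{i_1}\cdots z_{i_k}x\mapsto\omega\cdot k+\wl{x}_S$ is a left-noetherianity witness, whereas $z_1=az_1$ shows that right-noetherianity fails. Now $bc$ and $a$ have no common right-multiple in $A$ (Example~\ref{ex:rrtermination}). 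So their common right-multiples form the whole $z$-part, which contains $z_1$ and $z_2$. A right-lcm would left-divide both, but the only element of the $z$-part left-dividing $z_i$ is $z_i$ itself.

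So your hedge ``may have to be added'' should read ``must be added'', and the hypothesis to add is well-foundedness of $\strldivt$. With right-noetherianity assumed (homogeneity is a special case), your single well-founded induction on $m$ is complete. Drop the secondary induction on $\wl{u}_S+\wl{v}_S$: it is not needed, and it would not decrease anyway, since $\wl{\rc{a}{b}}_S$ may exceed $\wl{v}_S$.
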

        \begin{proof}
            \eqref{reduc:Slcm}: is a restatement of the fact that the reduction rule holds.\\
            \eqref{reduc:lcancel}: Let $X,Y,Z \in S^*$. We prove by induction on $\wl{Z}_S$ that $ZX\meq ZY$ implies $X\meq Y$. If $\wl{Z}_S=0$, then $X\weq Y$ so $X\meq Y$. Assume $\wl{Z}_S = l \ge 1$. Write $Z = aZ'$ with $a \in S$ and $Z' \in S^*$ with $\wl{Z'}=l-1$. We have $aZ'X\meq aZ'Y$. By the reduction rule, there exists a word $W$ such that $Z'X \meq \rc{a}{a}W$ and $Z'Y \meq \rc{a}{a}W$. But $\rc{a}{a}=\varepsilon$, so $Z'X \meq W$ and $Z'Y \meq W$, so $Z'X\meq Z'Y$, and the induction hypothesis applies because $\wl{Z'}\le l-1$. So $X\meq Y$. If $x,y,z \in M$ are such that $zx =zy$, then if we take $X,Y,Z \in S^*$ that represent $x,y,z$ respectively, we have $ZX\meq ZY$, so by the previous result, $X\meq Y$, hence $x=y$.\\ 
            \eqref{reduc:conlcm}: Let $u,v \in S^*$ such that there exist $W_0, W_1 \in S^*$ verifying $uW_0\meq vW_1$. We prove by induction on $l=\wl{uW_0}=\wl{vW_1}$ that $u$ and $v$ have a common right-multiple that left-divides $uW_0$. (This for all $l$ will imply that $u$ and $v$ have a right-lcm).\\
            If $l=0$ then $u$ and $v$ are empty and $\varepsilon$ is a right-lcm. Assume $l\ge 1$. If $u$ or $v$ is empty, the right-lcm is represented by the non empty element of the pair $(u,v)$. Assume $u$ and $v$ are both non-empty and write $u \weq a\tilde{u}$ and $v \weq b\tilde{v}$. The reduction rule will allow us to build the diagram on Figure \ref{fig:reduction_rule}. By the reduction rule, there exists a word $W_2 \in S^*$ such that $\tilde{u}W_1 \meq \rc{a}{b}W_2$ and $\rc{b}{a}W_2\meq \tilde{v}W_0$. By the induction hypothesis, $\tilde{u}$ and $\rc{a}{b}$ have a common right-multiple, we denote it by $\tilde{u}v'_1=\rc{a}{b}\tilde{u}_1$, and it is a left-divisor of $\tilde{u}W_1\meq\rc{a}{b}W_2$, i.e. there exist a word $W_4$ such that $\rc{a}{b}\tilde{u}_1W_4 \meq \rc{a}{b}W_2$. Successive use of the reduction rule allows to simplify on the left and we get $\tilde{u}_1W_4\meq W_2$. Symmetrically, there is a common right-multiple of $\tilde{v}$ and $\rc{b}{a}$ which is a left-divisor of $\rc{b}{a}\tilde{v}_1W_3\meq \tilde{v}W_0$, so there exist words $W_3, u'_1$ and $\tilde{v}_1$ such that $\rc{b}{a}\tilde{v}_1W_3\meq \rc{b}{a}W_2$ and we can simplify to get $\tilde{v}_1W_3\meq W_2$. Then $\tilde{u}_1$ and $\tilde{v}_1$ have a common right-multiple, namely $W_2$, and we can apply the induction hypothesis to get that $\tilde{u}_1$ and $\tilde{v}_1$ have a common right-multiple, that we denote by $\tilde{u}_1v'_2\meq\tilde{v}_1u'_2$, which is a left-divisor of $W_2$, i.e. there exists a word $W_5$ such that $\tilde{u}_1v'_2W_5\meq W_2$. We put $u'=u_1'u_2'$ and $v'=v_1'v_2'$ and we have $uv' = a\tilde{u}v_1'v_2'\meq a\rc{a}{b}\tilde{u}_1v_2'\meq b\rc{b}{a}\tilde{v}_1\tilde{v}_2\meq b\tilde{v}u_1'u_2'=vu'$ and it is a left-divisor of $uW_0\meq vW_1$.
            \end{proof}
            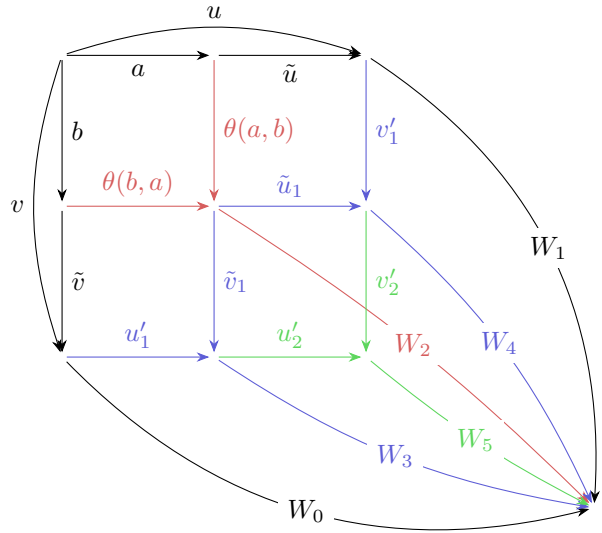
\begin{figure}
            \[\begin{tikzpicture}\label{figimported:reduction_induction}
                \node[scale = 0.5] (P0) at (1, -1) {};
                \node[scale = 0.5] (P1) at (3, -1) {};
                \node[scale = 0.5] (P2) at (5, -1) {};
                \node[scale = 0.5] (P3) at (1, -3) {};
                \node[scale = 0.5] (P4) at (3, -3) {};
                \node[scale = 0.5] (P5) at (5, -3) {};
                \node[scale = 0.5] (P6) at (1, -5) {};
                \node[scale = 0.5] (P7) at (3, -5) {};
                \node[scale = 0.5] (P8) at (5, -5) {};
                \node[scale = 0.5] (P9) at (8, -7) {};
                \draw[-Stealth] (P0) edge [swap] node[auto]{$a$} (P1);
                \draw[-Stealth] (P0) edge [bend right=-18] node[auto]{$u$} (P2);
                \draw[-Stealth] (P0) edge [] node[auto]{$b$} (P3);
                \draw[-Stealth] (P0) edge [swap, bend right=18] node[auto]{$v$} (P6);
                \draw[-Stealth] (P1) edge [swap] node[auto]{${{\tilde{u}}}$} (P2);
                \draw[-Stealth] (P1) edge [color={rgb,255:red,214;green,92;blue,92}] node[auto]{${{\theta(a,b)}}$} (P4);
                \draw[-Stealth] (P2) edge [color={rgb,255:red,92;green,92;blue,214}] node[auto]{${{v'_1}}$} (P5);
                \draw[-Stealth] (P2) edge [bend right=-30] node[midway, fill=white]{${{W_1}}$} (P9);
                \draw[-Stealth] (P3) edge [color={rgb,255:red,214;green,92;blue,92}] node[auto]{${{\theta(b,a)}}$} (P4);
                \draw[-Stealth] (P3) edge [] node[auto]{${{\tilde{v}}}$} (P6);
                \draw[-Stealth] (P4) edge [color={rgb,255:red,92;green,92;blue,214}] node[auto]{${\tilde{u}_1}$} (P5);
                \draw[-Stealth] (P4) edge [color={rgb,255:red,92;green,92;blue,214}] node[auto]{${\tilde{v}_1}$} (P7);
                \draw[-Stealth] (P4) edge [color={rgb,255:red,214;green,92;blue,92}, bend right=-6] node[midway, fill=white]{${{W_2}}$} (P9);
                \draw[-Stealth] (P5) edge [color={rgb,255:red,92;green,214;blue,92}] node[auto]{${{v'_2}}$} (P8);
                \draw[-Stealth] (P5) edge [color={rgb,255:red,92;green,92;blue,214}, bend right=-12] node[midway, fill=white]{${W_4}$} (P9);
                \draw[-Stealth] (P6) edge [color={rgb,255:red,92;green,92;blue,214}] node[auto]{${{u'_1}}$} (P7);
                \draw[-Stealth] (P6) edge [bend right=30] node[midway, fill=white]{${{W_0}}$} (P9);
                \draw[-Stealth] (P7) edge [color={rgb,255:red,92;green,214;blue,92}] node[auto]{${{u'_2}}$} (P8);
                \draw[-Stealth] (P7) edge [color={rgb,255:red,92;green,92;blue,214}, bend right=12] node[midway, fill=white]{${W_3}$} (P9);
                \draw[-Stealth] (P8) edge [color={rgb,255:red,92;green,214;blue,92}, bend right=6] node[midway, fill=white]{${{W_5}}$} (P9);
            \end{tikzpicture}\]
            \caption{Illustration of the proof of Lemma \ref{lemm:reduction} \eqref{reduc:conlcm}.} 
            \label{fig:reduction_rule}
            \end{figure}
            The fact that the monoid has conditional right-lcms is not fully satisfying. We would like to have an algorithm that computes the right-lcm of two elements when it exists, and tells us if it doesn't. From the proof of Lemma \ref{lemm:reduction} \eqref{reduc:conlcm} the result above can in fact be derived to an algorithm that computes the right-lcm when it exists. We just start with the assumption common multiple exists, and use the reduction rule successively to find $u'$ and $v'$. If some relation $a\ldots W_i\meq b\ldots W_j$ appears but $\rc{a}{b}$ is undefined, the algorithm stops with no lcm found. The process is illustrated in the following example.
            \begin{example}\label{ex:reduction_rule}
            We start with the initial words $ac$ and $bc$, representing elements of the Artin-Tits monoid of type $A_3$:\begin{tikzcd}
                a && c && b
                \arrow[no head, from=1-1, to=1-3]
                \arrow[no head, from=1-3, to=1-5]
            \end{tikzcd}. The process starts with the assumption that there exists a common multiple $acU=bcV$ and deduces from the reduction rule the relations in the graph illustrated in figure \ref{fig:reduction_rule}. We can add additional edges and vertices to the graph, by going down from the vertex labelled by an equation $W_i = y_1\ldots y_nW_j$ by simplifying each letter of $y$ one by one, labelling the vertices not by an equation but simply by the words $y_r\ldots y_nW_j$, until we reach the vertex labelled by $W_j$ and were it can be merged with another vertex where $W_j$ appears. The graph obtained is a Van Kampen diagram whose boundary gives a commutative square $uv'\meq v'u$. This second process is illustrated in figure \ref{fig:reduction_rule_completed}.
            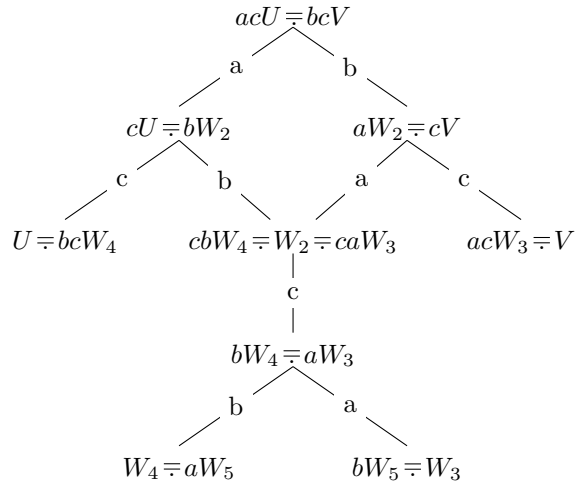
\begin{figure}
            \[\begin{tikzpicture}
                \node at (-1.5, -0.0) {$acU \meq bcV$};
                \node at (-3.0, -1.5) {$cU \meq bW_{2}$};
                \draw[color=black] (-3.0,-1.2) -- (-1.5,-0.15000000000000002) node [midway, fill= white] {a};
                \node at (0.0, -1.5) {$aW_{2} \meq cV$};
                \draw[color=black] (0.0,-1.2) -- (-1.5,-0.15000000000000002) node [midway, fill= white] {b};
                \node at (-4.5, -3.0) {$U \meq bcW_{4}$};
                \draw[color=black] (-4.5,-2.7) -- (-3.0,-1.65) node [midway, fill= white] {c};
                \node at (-1.5, -3.0) {$cbW_{4} \meq W_{2} \meq caW_{3}$};
                \draw (-1.8,-2.7) -- (-3.0,-1.65) node [midway, fill= white] {b};
                \draw (-1.2,-2.7) -- (0.0,-1.65) node [midway, fill= white] {a};
                \node at (1.5, -3.0) {$acW_{3} \meq V$};
                \draw[color=black] (1.5,-2.7) -- (0.0,-1.65) node [midway, fill= white] {c};
                \node at (-1.5, -4.5) {$bW_{4} \meq aW_{3}$};
                \draw[color=black] (-1.5,-4.2) -- (-1.5,-3.15) node [midway, fill= white] {c};
                \node at (-3.0, -6.0) {$W_{4} \meq aW_{5}$};
                \draw[color=black] (-3.0,-5.7) -- (-1.5,-4.65) node [midway, fill= white] {b};
                \node at (0.0, -6.0) {$bW_{5} \meq W_{3}$};
                \draw[color=black] (0.0,-5.7) -- (-1.5,-4.65) node [midway, fill= white] {a};
            \end{tikzpicture}\]  
        \caption{Graph obtained by applying the reduction rule to the initial assumption $acU\meq bcV$ in type $A_3$. When going down in the graph, going through an edge labelled by $x$, the word in the bottom vertex are obtained by simplifying the first letter $x$ of the word in the top vertex.}
        \label{fig:reduction_rule}
        \end{figure}
        \end{example}
        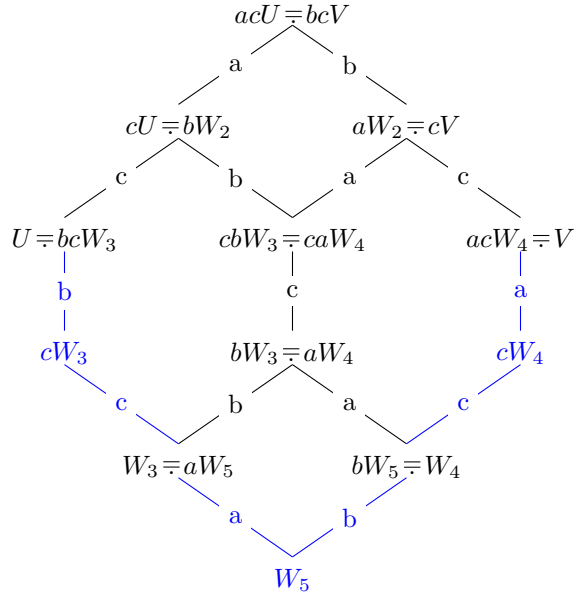
\begin{figure}
        \[\begin{tikzpicture}
            \node[color=black] at (-1.5, -0.0) {$acU \meq bcV$};
            \node[color=black] at (-3.0, -1.5) {$cU \meq bW_{2}$};
            \node[color=black] at (0.0, -1.5) {$aW_{2} \meq cV$};
            \node[color=black] at (-4.5, -3.0) {$U \meq bcW_{3}$};
            \node[color=black] at (-1.5, -3.0) {$cbW_{3} \meq caW_{4}$};
            \node[color=black] at (1.5, -3.0) {$acW_{4} \meq V$};
            \node[color=blue] at (-4.5, -4.5) {$cW_{3}$};
            \node[color=black] at (-1.5, -4.5) {$bW_{3} \meq aW_{4}$};
            \node[color=blue] at (1.5, -4.5) {$cW_{4}$};
            \node[color=black] at (-3.0, -6.0) {$W_{3} \meq aW_{5}$};
            \node[color=black] at (0.0, -6.0) {$bW_{5} \meq W_{4}$};
            \node[color=blue] at (-1.5, -7.5) {$W_{5}$};
            \draw[color=black] (-3.0,-1.2) -- (-1.5,-0.15000000000000002) node [midway, fill= white] {a};
            \draw[color=black] (0.0,-1.2) -- (-1.5,-0.15000000000000002) node [midway, fill= white] {b};
            \draw[color=black] (-4.5,-2.7) -- (-3.0,-1.65) node [midway, fill= white] {c};
            \draw[color=black] (1.5,-2.7) -- (0.0,-1.65) node [midway, fill= white] {c};
            \draw[color=black] (-1.5,-2.7) -- (-3.0,-1.65) node [midway, fill= white] {b};
            \draw[color=black] (-1.5,-2.7) -- (0.0,-1.65) node [midway, fill= white] {a};
            \draw[color=black] (-1.5,-4.2) -- (-1.5,-3.15) node [midway, fill= white] {c};
            \draw[color=black] (-3.0,-5.7) -- (-1.5,-4.65) node [midway, fill= white] {b};
            \draw[color=black] (0.0,-5.7) -- (-1.5,-4.65) node [midway, fill= white] {a};
            \draw[color=blue] (-4.5,-4.2) -- (-4.5,-3.15) node [midway, fill= white] {b};
            \draw[color=blue] (-3.0,-5.7) -- (-4.5,-4.65) node [midway, fill= white] {c};
            \draw[color=blue] (-1.5,-7.2) -- (-3.0,-6.15) node [midway, fill= white] {a};
            \draw[color=blue] (1.5,-4.2) -- (1.5,-3.15) node [midway, fill= white] {a};
            \draw[color=blue] (0.0,-5.7) -- (1.5,-4.65) node [midway, fill= white] {c};
            \draw[color=blue] (-1.5,-7.2) -- (0.0,-6.15) node [midway, fill= white] {b};
        \end{tikzpicture}\]
        \caption{Completing the graph from figure \ref{fig:reduction_rule} to get a Van Kampen diagram.}
        \label{fig:reduction_rule_completed}
        \end{figure}
        This algorithm, that works only on homogeneous presentations, is very similar (and equivalent) to the right-reversing algorithm of \cite{dehornoy_foundations_2015}.

        \begin{remark}\label{remark:termination}
            The construction of the graph described above does not necessarily end. We know by the proof of Lemma \ref{lemm:reduction} \eqref{reduc:conlcm}, that it ends and gives the right-lcm of $(u,v)$ if the pair $(u,v)$ has a common right-multiple. Conversely, if it ends, it gives a right-common multiple of $(u,v)$, and it is necessarily the right-lcm. Now it can also fail if some relation $a\ldots W_i\meq b\ldots W_j$ appears in the graph but $\rc{a}{b}$ is undefined, and it can also continue indefinitely.\\
            In the first case, we can stop the algorithm and conclude that the pair $(u,v)$ has no common right-multiple.\\
            But to fully decide if the pair $(u,v)$ has a common right-multiple, we need a way to stop the algorithm when the graph is infinite.
        \end{remark}

        \begin{example}\label{ex:termination}
            We reiterate the process of the previous example with the same words $(ac,bc)$ but this time with the Artin-Tits monoid of type $\widetilde{A_2}$ that has presentation $\pres{\{a,b,c\}}{aba=bab, bcb=cbc, cac=aca}$:
            This process is illustrated in figure \ref{fig:non_termination_example}.
            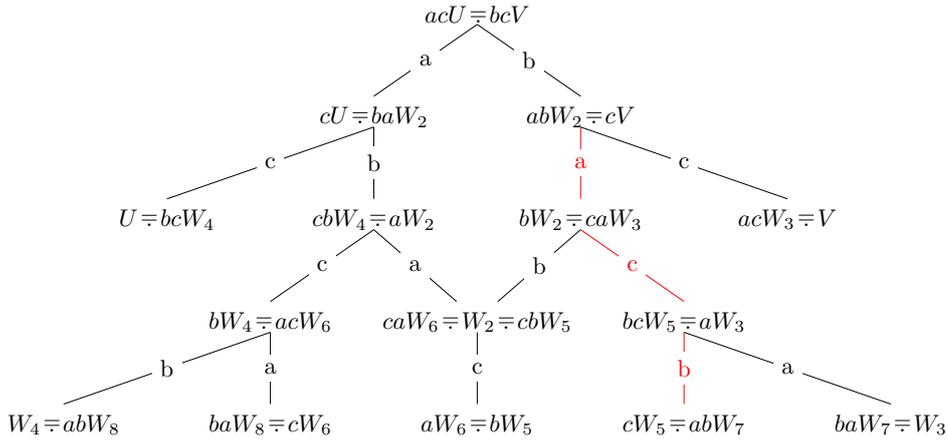
\begin{figure}
            \resizebox{\linewidth}{!}{%
                \begin{tikzpicture}
                    \node at (-1.5, -0.0) {$acU \meq bcV$};
                    \node at (-3.0, -1.5) {$cU \meq baW_{2}$};
                    \draw[color=black] (-3.0,-1.2) -- (-1.5,-0.15000000000000002) node [midway, fill= white] {a};
                    \node at (0.0, -1.5) {$abW_{2} \meq cV$};
                    \draw[color=black] (0.0,-1.2) -- (-1.5,-0.15000000000000002) node [midway, fill= white] {b};
                    \node at (-6.0, -3.0) {$U \meq bcW_{4}$};
                    \draw[color=black] (-6.0,-2.7) -- (-3.0,-1.65) node [midway, fill= white] {c};
                    \node at (-3.0, -3.0) {$cbW_{4} \meq aW_{2}$};
                    \draw[color=black] (-3.0,-2.7) -- (-3.0,-1.65) node [midway, fill= white] {b};
                    \node at (0.0, -3.0) {$bW_{2} \meq caW_{3}$};
                    \draw[color=red] (0.0,-2.7) -- (0.0,-1.65) node [midway, fill= white] {a};
                    \node at (3.0, -3.0) {$acW_{3} \meq V$};
                    \draw[color=black] (3.0,-2.7) -- (0.0,-1.65) node [midway, fill= white] {c};
                    \node at (-4.5, -4.5) {$bW_{4} \meq acW_{6}$};
                    \draw[color=black] (-4.5,-4.2) -- (-3.0,-3.15) node [midway, fill= white] {c};
                    \node at (-1.5, -4.5) {$caW_{6} \meq W_{2} \meq cbW_{5}$};
                    \draw (-1.8,-4.2) -- (-3.0,-3.15) node [midway, fill= white] {a};
                    \draw (-1.2,-4.2) -- (0.0,-3.15) node [midway, fill= white] {b};
                    \node at (1.5, -4.5) {$bcW_{5} \meq aW_{3}$};
                    \draw[color=red] (1.5,-4.2) -- (0.0,-3.15) node [midway, fill= white] {c};
                    \node at (-7.5, -6.0) {$W_{4} \meq abW_{8}$};
                    \draw[color=black] (-7.5,-5.7) -- (-4.5,-4.65) node [midway, fill= white] {b};
                    \node at (-4.5, -6.0) {$baW_{8} \meq cW_{6}$};
                    \draw[color=black] (-4.5,-5.7) -- (-4.5,-4.65) node [midway, fill= white] {a};
                    \node at (-1.5, -6.0) {$aW_{6} \meq bW_{5}$};
                    \draw[color=black] (-1.5,-5.7) -- (-1.5,-4.65) node [midway, fill= white] {c};
                    \node at (1.5, -6.0) {$cW_{5} \meq abW_{7}$};
                    \draw[color=red] (1.5,-5.7) -- (1.5,-4.65) node [midway, fill= white] {b};
                    \node at (4.5, -6.0) {$baW_{7} \meq W_{3}$};
                    \draw[color=black] (4.5,-5.7) -- (1.5,-4.65) node [midway, fill= white] {a};
                \end{tikzpicture}
            }
            \caption{Graph obtained by applying the reduction rule to the initial assumption $acU\meq bcV$ in type $\widetilde{A_2}$. The red edges are the ones that witness the non-termination of the process.}
            \label{fig:non_termination_example}
            \end{figure}
        At this point, we know that the process will not end. Following the red edges, we can say that if there exist words $W_2$ and $V$ such that $abW_2 \meq cV$, then there exists a word $W_3$ such that $bW_2 \meq caW_3$, then some $W_5$ such that $bcW_5 \meq aW_3$, and finally some $W_7$ such that $abW_7 \meq cW_5$, ending with a relation of the same form as the one we started with. Thus, the process will never end. This path in the graph is a witness of the fact that the pair $(ac,bc)$ has no common right-multiple.
        \end{example}
    The question is now to know if such witness always exists when the construction of the graph does not end. To see this, we present the right-reversing algorithm of \cite{dehornoy_foundations_2015}, which is a more general process than the one we just presented.
        
\section{Right-reversing}
    \subsection{The right-reversing algorithm}
        We denote by $\fin{S}$ a formal copy of $S$ representing the inverses $\fin{s}$ of the elements $s$ of $S$. Words on $S \cup \fin{S}$ are called signed words on $S$. If $w$ is a signed word on $S$, then $\fin{w}$ is obtained from $w$ by reversing the order of the letters, and replacing each letter $s$ by $\fin{s}$ and each letter $\fin{s}$ by $s$. The word problem with respect to a group presentation $\pres{S}{R}$ is the problem of deciding if a signed word $w$ represent the identity element of the group. We define the notion of right-reversing, which is illustrated in figure \ref{fig:right_reversing}.
        \begin{definition}\label{defi:right_reverse}
            Let $w, w'$ be two signed words on $S$. We say that $w$ is \defit{right-reversible} to $w'$ in one step, written $w \rr^1 w'$, if there exists a subword $\fin{s}t$ of $w$ with $s,t$ in $S$, such that $w'$ is obtained from $w$ by replacing $\fin{s}t$ by $\rc{s}{t}\fin{\rc{t}{s}}$ (if $s=t$, then $\fin{s}t$ is simply removed). If $w$ has no subword of the form $\fin{s}t$, such that $\rc{s}{t}$ is defined, we say that $w$ is \defit{irreducible}. If there exists a sequence of words $w = w_0, w_1, \ldots, w_n = w'$ such that $w_i \rr^1 w_{i+1}$ for all $0\le i < n$, we say that $w$ is \defit{right-reversible} to $w'$ in $n$ steps and we write $w \rr^n w'$, and $w \rr^* w'$ (resp. $\rr^+w'$) if there exists $n \ge 0$ (resp. $n>0$) such that $w \rr^n w'$.
        \end{definition}
        \begin{figure}  
            \begin{center}  
                \begin{tikzpicture}[scale=2]
                    \node[scale = 0.5] (UL) at (0,0) {};
                    \node[scale = 0.5] (UR) at (1,0) {};
                    \node[scale = 0.5] (BL) at (0,-1) {};
                    \node[scale = 0.5] (BR) at (1,-1) {};
                    \node[scale = 0.5] (BBL) at (0,-2) {};
                    \node[scale = 0.5] (URR) at (2, 0) {};
                    \draw[-Stealth] (UL) -- (UR) node [midway, above] {$s$};
                    \draw[-Stealth] (UR) -- (BR) node [midway, right] {$\rc{s}{t}$};
                    \draw[-Stealth] (UL) -- (BL) node [midway, left] {$t$};
                    \draw[-Stealth] (BL) -- (BR) node [midway, below] {$\rc{t}{s}$};
                    \draw (BL) -- (BBL);
                    \draw (UR) -- (URR);
                    \draw[-Stealth, color = red] (-0.05,-1.95) -- (-0.05,0.05) -- (1.95,0.05) node [above] {$w$};
                    \draw[-Stealth, color = blue] (0.05,-1.95) -- (0.05, -1.05) -- (1.05,-1.05) -- (1.05,-0.05) -- (1.95,-0.05) node [below] {$w'$};
                \end{tikzpicture}
            \end{center}
            \caption{Illustration of the right-reversing process.}
            \label{fig:right_reversing}
        \end{figure}
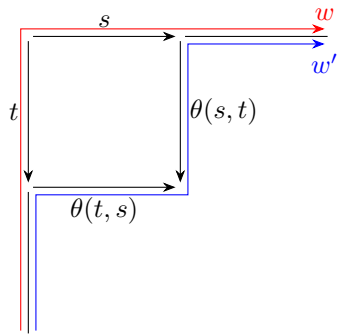
        We refer to \cite{dehornoy_foundations_2015} for a more detailed description of the properties of right-reversing. We present a quite large example to see how two algorithms behave very similarly.
        \begin{example}
            Let $M$ be the Artin-Tits monoid of type $B_3$. Computing the right-lcm of $aca$ and $bc$ with the two algorithms gives the commutative square common multiple $(aca)bcacb = (bc)acabca$ (see figure \ref{fig:comparison_algorithms}). Type $B_3$ has Coxeter diagram \begin{tikzcd}
                a && c && b
                \arrow[no head, from=1-3, to=1-5]
                \arrow["4", no head, from=1-1, to=1-3]
            \end{tikzcd}
        \end{example}
        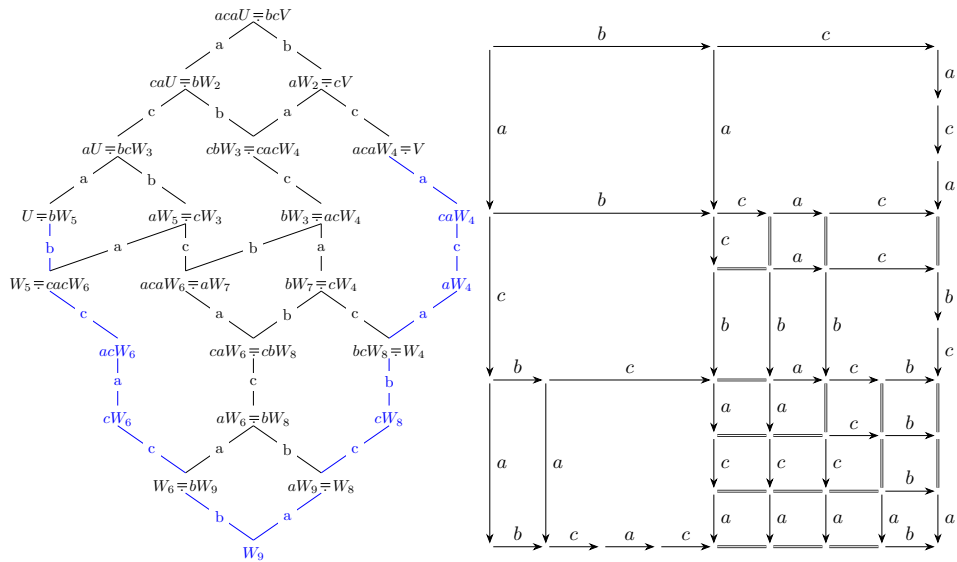
\begin{figure}
        \noindent
        \begin{minipage}[c]{0.5\linewidth}
            \resizebox{\linewidth}{!}{%
                \begin{tikzpicture}
                    \node[color=black] at (-1.5, -0.0) {$acaU \meq bcV$};
                    \node[color=black] at (-3.0, -1.5) {$caU \meq bW_{2}$};
                    \node[color=black] at (0.0, -1.5) {$aW_{2} \meq cV$};
                    \node[color=black] at (-4.5, -3.0) {$aU \meq bcW_{3}$};
                    \node[color=black] at (-1.5, -3.0) {$cbW_{3} \meq cacW_{4}$};
                    \node[color=black] at (1.5, -3.0) {$acaW_{4} \meq V$};
                    \node[color=black] at (-6.0, -4.5) {$U \meq bW_{5}$};
                    \node[color=black] at (-3.0, -4.5) {$aW_{5} \meq cW_{3}$};
                    \node[color=black] at (0.0, -4.5) {$bW_{3} \meq acW_{4}$};
                    \node[color=blue] at (3.0, -4.5) {$caW_{4}$};
                    \node[color=black] at (-6.0, -6.0) {$W_{5} \meq cacW_{6}$};
                    \node[color=black] at (-3.0, -6.0) {$acaW_{6} \meq aW_{7}$};
                    \node[color=black] at (0.0, -6.0) {$bW_{7} \meq cW_{4}$};
                    \node[color=blue] at (3.0, -6.0) {$aW_{4}$};
                    \node[color=blue] at (-4.5, -7.5) {$acW_{6}$};
                    \node[color=black] at (-1.5, -7.5) {$caW_{6} \meq cbW_{8}$};
                    \node[color=black] at (1.5, -7.5) {$bcW_{8} \meq W_{4}$};
                    \node[color=blue] at (-4.5, -9.0) {$cW_{6}$};
                    \node[color=black] at (-1.5, -9.0) {$aW_{6} \meq bW_{8}$};
                    \node[color=blue] at (1.5, -9.0) {$cW_{8}$};
                    \node[color=black] at (-3.0, -10.5) {$W_{6} \meq bW_{9}$};
                    \node[color=black] at (0.0, -10.5) {$aW_{9} \meq W_{8}$};
                    \node[color=blue] at (-1.5, -12.0) {$W_{9}$};
                    \draw[color=black] (-3.0,-1.2) -- (-1.5,-0.15000000000000002) node [midway, fill= white] {a};
                    \draw[color=black] (0.0,-1.2) -- (-1.5,-0.15000000000000002) node [midway, fill= white] {b};
                    \draw[color=black] (-4.5,-2.7) -- (-3.0,-1.65) node [midway, fill= white] {c};
                    \draw[color=black] (1.5,-2.7) -- (0.0,-1.65) node [midway, fill= white] {c};
                    \draw[color=black] (-1.5,-2.7) -- (-3.0,-1.65) node [midway, fill= white] {b};
                    \draw[color=black] (-1.5,-2.7) -- (0.0,-1.65) node [midway, fill= white] {a};
                    \draw[color=black] (-6.0,-4.2) -- (-4.5,-3.15) node [midway, fill= white] {a};
                    \draw[color=black] (-3.0,-4.2) -- (-4.5,-3.15) node [midway, fill= white] {b};
                    \draw[color=black] (0.0,-4.2) -- (-1.5,-3.15) node [midway, fill= white] {c};
                    \draw[color=black] (-6.0,-5.7) -- (-3.0,-4.65) node [midway, fill= white] {a};
                    \draw[color=black] (0.0,-5.7) -- (0.0,-4.65) node [midway, fill= white] {a};
                    \draw[color=black] (-3.0,-5.7) -- (-3.0,-4.65) node [midway, fill= white] {c};
                    \draw[color=black] (-3.0,-5.7) -- (0.0,-4.65) node [midway, fill= white] {b};
                    \draw[color=black] (1.5,-7.2) -- (0.0,-6.15) node [midway, fill= white] {c};
                    \draw[color=black] (-1.5,-7.2) -- (-3.0,-6.15) node [midway, fill= white] {a};
                    \draw[color=black] (-1.5,-7.2) -- (0.0,-6.15) node [midway, fill= white] {b};
                    \draw[color=black] (-1.5,-8.7) -- (-1.5,-7.65) node [midway, fill= white] {c};
                    \draw[color=black] (-3.0,-10.2) -- (-1.5,-9.15) node [midway, fill= white] {a};
                    \draw[color=black] (0.0,-10.2) -- (-1.5,-9.15) node [midway, fill= white] {b};
                    \draw[color=blue] (-6.0,-5.7) -- (-6.0,-4.65) node [midway, fill= white] {b};
                    \draw[color=blue] (-4.5,-7.2) -- (-6.0,-6.15) node [midway, fill= white] {c};
                    \draw[color=blue] (-4.5,-8.7) -- (-4.5,-7.65) node [midway, fill= white] {a};
                    \draw[color=blue] (-3.0,-10.2) -- (-4.5,-9.15) node [midway, fill= white] {c};
                    \draw[color=blue] (-1.5,-11.7) -- (-3.0,-10.65) node [midway, fill= white] {b};
                    \draw[color=blue] (3.0,-4.2) -- (1.5,-3.15) node [midway, fill= white] {a};
                    \draw[color=blue] (3.0,-5.7) -- (3.0,-4.65) node [midway, fill= white] {c};
                    \draw[color=blue] (1.5,-7.2) -- (3.0,-6.15) node [midway, fill= white] {a};
                    \draw[color=blue] (1.5,-8.7) -- (1.5,-7.65) node [midway, fill= white] {b};
                    \draw[color=blue] (0.0,-10.2) -- (1.5,-9.15) node [midway, fill= white] {c};
                    \draw[color=blue] (-1.5,-11.7) -- (0.0,-10.65) node [midway, fill= white] {a};
                \end{tikzpicture}
            }
        \end{minipage}%
        \begin{minipage}[c]{0.5\linewidth}
            \resizebox{\linewidth}{!}{%
                \begin{tikzpicture}
                    \node[scale = 0.5] (P0) at (0, -6) {};
                    \node[scale = 0.5] (P1) at (0, -3) {};
                    \node[scale = 0.5] (P2) at (0, 0) {};
                    \node[scale = 0.5] (P3) at (0, 3) {};
                    \node[scale = 0.5] (P4) at (4, 3) {};
                    \node[scale = 0.5] (P5) at (8, 3) {};
                    \node[scale = 0.5] (P6) at (4, 0) {};
                    \node[scale = 0.5] (P7) at (1, -3) {};
                    \node[scale = 0.5] (P8) at (4, -3) {};
                    \node[scale = 0.5] (P9) at (4, -1) {};
                    \node[scale = 0.5] (P10) at (1, -6) {};
                    \node[scale = 0.5] (P11) at (2, -6) {};
                    \node[scale = 0.5] (P12) at (3, -6) {};
                    \node[scale = 0.5] (P13) at (4, -6) {};
                    \node[scale = 0.5] (P14) at (4, -4) {};
                    \node[scale = 0.5] (P15) at (4, -5) {};
                    \node[scale = 0.5] (P16) at (5, 0) {};
                    \node[scale = 0.5] (P17) at (6, 0) {};
                    \node[scale = 0.5] (P18) at (8, 0) {};
                    \node[scale = 0.5] (P19) at (8, 2) {};
                    \node[scale = 0.5] (P20) at (8, 1) {};
                    \node[scale = 0.5] (P21) at (5, -1) {};
                    \node[scale = 0.5] (P22) at (5, -3) {};
                    \node[scale = 0.5] (P23) at (5, -4) {};
                    \node[scale = 0.5] (P24) at (5, -5) {};
                    \node[scale = 0.5] (P25) at (5, -6) {};
                    \node[scale = 0.5] (P26) at (6, -1) {};
                    \node[scale = 0.5] (P27) at (6, -3) {};
                    \node[scale = 0.5] (P28) at (6, -4) {};
                    \node[scale = 0.5] (P29) at (6, -5) {};
                    \node[scale = 0.5] (P30) at (6, -6) {};
                    \node[scale = 0.5] (P31) at (8, -1) {};
                    \node[scale = 0.5] (P32) at (7, -3) {};
                    \node[scale = 0.5] (P33) at (8, -3) {};
                    \node[scale = 0.5] (P34) at (8, -2) {};
                    \node[scale = 0.5] (P35) at (7, -4) {};
                    \node[scale = 0.5] (P36) at (7, -5) {};
                    \node[scale = 0.5] (P37) at (7, -6) {};
                    \node[scale = 0.5] (P38) at (8, -4) {};
                    \node[scale = 0.5] (P39) at (8, -5) {};
                    \node[scale = 0.5] (P40) at (8, -6) {};
                    \draw[-Stealth] (P1) edge node[auto]{$a$} (P0);
                    \draw[-Stealth] (P2) edge node[auto]{$c$} (P1);
                    \draw[-Stealth] (P3) edge node[auto]{$a$} (P2);
                    \draw[-Stealth] (P3) edge node[auto]{$b$} (P4);
                    \draw[-Stealth] (P4) edge node[auto]{$c$} (P5);
                    \draw[-Stealth] (P2) edge node[auto]{$b$} (P6);
                    \draw[-Stealth] (P4) edge node[auto]{$a$} (P6);
                    \draw[-Stealth] (P1) edge node[auto]{$b$} (P7);
                    \draw[-Stealth] (P7) edge node[auto]{$c$} (P8);
                    \draw[-Stealth] (P6) edge node[auto]{$c$} (P9);
                    \draw[-Stealth] (P9) edge node[auto]{$b$} (P8);
                    \draw[-Stealth] (P0) edge node[auto]{$b$} (P10);
                    \draw[-Stealth] (P7) edge node[auto]{$a$} (P10);
                    \draw[-Stealth] (P10) edge node[auto]{$c$} (P11);
                    \draw[-Stealth] (P11) edge node[auto]{$a$} (P12);
                    \draw[-Stealth] (P12) edge node[auto]{$c$} (P13);
                    \draw[-Stealth] (P8) edge node[auto]{$a$} (P14);
                    \draw[-Stealth] (P14) edge node[auto]{$c$} (P15);
                    \draw[-Stealth] (P15) edge node[auto]{$a$} (P13);
                    \draw[-Stealth] (P6) edge node[auto]{$c$} (P16);
                    \draw[-Stealth] (P16) edge node[auto]{$a$} (P17);
                    \draw[-Stealth] (P17) edge node[auto]{$c$} (P18);
                    \draw[-Stealth] (P5) edge node[auto]{$a$} (P19);
                    \draw[-Stealth] (P19) edge node[auto]{$c$} (P20);
                    \draw[-Stealth] (P20) edge node[auto]{$a$} (P18);
                    \draw[] (P9) edge[double] node[auto]{} (P21);
                    \draw[] (P16) edge[double] node[auto]{} (P21);
                    \draw[] (P8) edge[double] node[auto]{} (P22);
                    \draw[-Stealth] (P21) edge node[auto]{$b$} (P22);
                    \draw[] (P14) edge[double] node[auto]{} (P23);
                    \draw[-Stealth] (P22) edge node[auto]{$a$} (P23);
                    \draw[] (P15) edge[double] node[auto]{} (P24);
                    \draw[-Stealth] (P23) edge node[auto]{$c$} (P24);
                    \draw[] (P13) edge[double] node[auto]{} (P25);
                    \draw[-Stealth] (P24) edge node[auto]{$a$} (P25);
                    \draw[-Stealth] (P21) edge node[auto]{$a$} (P26);
                    \draw[] (P17) edge[double] node[auto]{} (P26);
                    \draw[-Stealth] (P22) edge node[auto]{$a$} (P27);
                    \draw[-Stealth] (P26) edge node[auto]{$b$} (P27);
                    \draw[] (P23) edge[double] node[auto]{} (P28);
                    \draw[] (P27) edge[double] node[auto]{} (P28);
                    \draw[] (P24) edge[double] node[auto]{} (P29);
                    \draw[-Stealth] (P28) edge node[auto]{$c$} (P29);
                    \draw[] (P25) edge[double] node[auto]{} (P30);
                    \draw[-Stealth] (P29) edge node[auto]{$a$} (P30);
                    \draw[-Stealth] (P26) edge node[auto]{$c$} (P31);
                    \draw[] (P18) edge[double] node[auto]{} (P31);
                    \draw[-Stealth] (P27) edge node[auto]{$c$} (P32);
                    \draw[-Stealth] (P32) edge node[auto]{$b$} (P33);
                    \draw[-Stealth] (P31) edge node[auto]{$b$} (P34);
                    \draw[-Stealth] (P34) edge node[auto]{$c$} (P33);
                    \draw[-Stealth] (P28) edge node[auto]{$c$} (P35);
                    \draw[] (P32) edge[double] node[auto]{} (P35);
                    \draw[] (P29) edge[double] node[auto]{} (P36);
                    \draw[] (P35) edge[double] node[auto]{} (P36);
                    \draw[] (P30) edge[double] node[auto]{} (P37);
                    \draw[-Stealth] (P36) edge node[auto]{$a$} (P37);
                    \draw[-Stealth] (P35) edge node[auto]{$b$} (P38);
                    \draw[] (P33) edge[double] node[auto]{} (P38);
                    \draw[-Stealth] (P36) edge node[auto]{$b$} (P39);
                    \draw[] (P38) edge[double] node[auto]{} (P39);
                    \draw[-Stealth] (P37) edge node[auto]{$b$} (P40);
                    \draw[-Stealth] (P39) edge node[auto]{$a$} (P40);
                \end{tikzpicture}\vspace{-10ex}
        }
        \end{minipage}
        \caption{On the left, applying the reduction rule, and on the right, applying right-reversing, to compute the right-lcm of $aca$ and $bc$ in type $B_3$.}
        \label{fig:comparison_algorithms}
        \end{figure}

        One fundamental property is the uniqueness of the maximal right-reversing diagram, which is a consequence of the following confluence property of right-reversing.
        \begin{lemma}\label{lemm:rr_confluence}
            Let $w, w_1, w_2$ be three signed words on $S$. We assume $w \rr^{n_1} w_1$ and $w \rr^{n_2} w_2$. Then there exists a signed word $w'$ such that $w_1 \rr^{m_1} w'$ and $w_2 \rr^{m_2} w'$,  and we have $m_1\le n_2, m_2\le n_1$ and $n_1 + m_1 = n_2 + m_2$.\\
            In particular, if $w\rr^*w_1$ and $w\rr^*w_2$, such that $w_1$ and $w_2$ are irreducible, then $w_1 = w_2$.
        \end{lemma}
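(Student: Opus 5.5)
The plan is to prove a strong local confluence statement for one-step reversing, and then lift it to $n_1,n_2$ steps with the exact bounds $m_1\le n_2$, $m_2\le n_1$ and $n_1+m_1=n_2+m_2$. This is the standard ``diamond'' tiling argument for rewriting systems whose critical pairs never overlap.

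\emph{Step 1: the one-step diamond.} Suppose $w\rr^1 w_1$ and $w\rr^1 w_2$, reducing the patterns $\fin{s}t$ and $\fin{s'}t'$ respectively. A pattern $\fin{s}t$ consists of a negative letter immediately followed by a positive letter. Two such patterns cannot overlap partially: the shared letter would have to be positive as the second letter of one pattern and negative as the first letter of the other. So either the two patterns are the same occurrence, and then $w_1=w_2$ since $\rcb$ is a function, or they are disjoint. In the disjoint case, reducing one pattern only changes the letters inside it, so the other occurrence $\fin{s'}t'$ is still present in $w_1$. Reducing it gives a common word $w'$ with $w_1\rr^1 w'$ and $w_2\rr^1 w'$. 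In the first case $m_1=m_2=0$; in the second $m_1=m_2=1$. Both choices satisfy $m_i\le 1$ and $1+m_1=1+m_2$. The degenerate case $s=t$, where the pattern is simply erased, is handled the same way, since erasure is also local to the pattern.

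\emph{Step 2: tiling.} I then prove the general statement by induction on $n_1+n_2$. If $n_1=0$ or $n_2=0$, take $w'$ to be the other endpoint. Otherwise, write $w\rr^1 u_1\rr^{n_1-1} w_1$ and $w\rr^1 u_2\rr^{n_2-1}w_2$, and apply Step 1 to get $v$ with $u_1\rr^{k}v$ and $u_2\rr^{k}v$, where $k\in\{0,1\}$.

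Next apply the induction hypothesis to $u_1\rr^{n_1-1}w_1$ and $u_1\rr^{k}v$; the total length is smaller. It gives $z$ with $w_1\rr^{a}z$ and $v\rr^{b}z$, where $a\le k$, $b\le n_1-1$ and $(n_1-1)+a=k+b$.

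Then apply the induction hypothesis to $u_2\rr^{n_2-1}w_2$ and $u_2\rr^{k+b}z$. This is legitimate because $k+b\le n_1$, so the total length is at most $n_1+n_2-1$. It gives $w'$ with $w_2\rr^{c}w'$ and $z\rr^{d}w'$, where $c\le k+b$, $d\le n_2-1$ and $(n_2-1)+d=k+b+c$.

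Finally set $m_1=a+d$ and $m_2=c$. The main obstacle is this arithmetic. Summing the two equalities gives $n_1+m_1=n_2+m_2$. For the bounds, $m_2=c\le k+b=n_1-1+a\le n_1$. Also $m_1=a+d$ with $d=k+b+c-n_2+1$. Substituting $a\le k\le 1$ and $d\le n_2-1$ naively gives only $m_1\le n_2$ in the case $a=0$. When $a=1$, we have $k=1$, and the relation $n_1+m_1=n_2+m_2$ together with $m_2\le n_1$ yields $m_1\le n_2$ directly. In fact $m_1\le n_2$ follows from $m_2\le n_1$ by the equality in general, which closes the induction.

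\emph{Step 3: the ``in particular''.} If $w_1$ and $w_2$ are irreducible, they admit no reversing step, so $m_1=m_2=0$ and $w_1=w'=w_2$. One point needs care: a word containing a pattern $\fin{s}t$ with $\rcb(s,t)$ undefined is still irreducible. This is consistent with the argument, because such a pattern is never reduced, so Step 1 only ever involves defined patterns.
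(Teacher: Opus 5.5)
Your argument is correct and is essentially the paper's proof: an induction on $n_1+n_2$ that tiles with the one-step diamond, which comes from the fact that two occurrences of patterns $\fin{s}t$ cannot overlap. The only cosmetic difference is that you peel the first step off both paths and close the top square with Step~1, whereas the paper peels a step off the longer path ($n_1\ge 2$) and treats $n_1=n_2=1$ as a base case.

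There is one slip to fix. The second application of the induction hypothesis gives $(n_2-1)+c=(k+b)+d$, obtained by comparing the lengths of $u_2\to w_2\to w'$ and $u_2\to z\to w'$, not $(n_2-1)+d=k+b+c$. With the correct equation, substituting $(n_1-1)+a=k+b$ yields $n_1+a+d=n_2+c$, i.e.\ $n_1+m_1=n_2+m_2$. Then $m_2=c\le k+b\le n_1$ gives $m_1\le n_2$, exactly as you say.
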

        \begin{proof}
            The proof is an induction on $n_1+n_2$. Note that if $n_1$ (resp. $n_2$) is $0$, then $w'=w_2$ (resp. $w'=w_1$) and $m_1 = n_2$ (resp. $m_2 = n_1$) works. We start with the case $n_1 = n_2 = 1$. If the two reductions take place in the same place in $w$, then $w_1$ and $w_2$ are equal, and $w'=w_1$ works with $m_1=m_2=0$. If the reduction take place in different places, then the two corresponding reversed subwords $\fin{s}t$ are disjoint. So we can apply one after the other, giving $w\rr^2w'$ and $m_1=m_2=1$. Assume now that $n_1+n_2\ge 3$ and both $n_1 > 0$ and $n_2 > 0$. Without loss of generality, we can assume $n_1 \ge 2$. Then, there exists a signed word $w_1'$ such that $w\rr^1 w_1' \rr^{n_1-1}w_1$. Then the induction hypothesis applies to $w, w_1'$ and $w_2$ since $1+n_2 < n_1 + n_2$. So there exists a signed word $w_2'$ such that $w_1'\rr^{p_1} w_2'$ and $w_2 \rr^{p_2} w_2'$, with $p_1\le n_2, p_2\le 1$ and $1 + p_1 = n_2 + p_2$. Now, the induction hypothesis applies to $w_1', w_1, w_2'$ because $n_1-1+p_1 \le n_1-1+n_2<n_1+n_2$. So there exists a signed word $w'$ such that $w_1\rr^{q_1} w'$ and $w_2'\rr^{q_2} w'$, with $q_1\le p_1$ and $q_2\le n_1-1$ and $n_1-1 + q_1 = p_1 + q_2$. Finally, we have $w_1 \rr^{m_1} w'$ and $w_2 \rr^{m_2} w'$, with $m_1 = q_1\le p_1 \le n_2$ and $m_2 = q_2 + p_2 \le n_1-1 + 1 = n_1$, and $n_1 + m_1 = 1+n_1-1+q_1=1+ p_1 + q_2 = n_2 + p_2 + q_2 = n_2 + m_2$.
        \end{proof}
        When right-reversing ends successfully, that is, when $\fin{u}v\rr^* \fin{v'}u'$, then the equality $uv' \meq vu'$ holds in the presented monoid. But the converse is not always true. This is the object of the notion of completeness, that must be understood in parallel with the reduction rule of Definition (\ref{defi:reduction_rule}), and is illustrated in figure \ref{fig:completeness}.\\
        \begin{definition}{\label{defi:completeness}}
            Let $\pres{S}{R}^+$ be a right-complemented presentation. Let $(u,v,\hat{u},\hat{v})$ such that $u\hat{v} \meq v\hat{u}$. We say that the quadruple $(u,v,\hat{u},\hat{v})$ is \defit{factorable through right-reversing} or $\rr$\defit{-factorable} if there exist three words $u',v',w'$ such that $\fin{u}v\rr^* \fin{u'}v'$ and $\fin{u'}w' \meq \hat{u}$ and $v'w' \meq \hat{v}$.\\
            We say that right-reversing is \defit{complete} for $\pres{S}{R}^+$ if any quadruple $(u,v,\hat{u},\hat{v})$ such that $u\hat{v} \meq v\hat{u}$ is $\rr$-factorable.
        \end{definition}
        \begin{figure}[b]
            \begin{center}
                \begin{tikzpicture}[scale = 0.8]\label{figimported:rr_factorable}
                    \node[scale = 0.5] (P0) at (1, -1) {};
                    \node[scale = 0.5] (P1) at (4, -1) {};
                    \node[scale = 0.5] (P2) at (6, -3) {};
                    \node[scale = 0.5] (P3) at (1, -4) {};
                    \node[scale = 0.5] (P4) at (4, -4) {};
                    \node[scale = 0.5] (P5) at (3, -6) {};
                    \node[scale = 0.5] (P6) at (6, -6) {};
                    \draw[-Stealth] (P0) edge [] node[auto]{$v$} (P1);
                    \draw[-Stealth] (P0) edge [swap] node[auto]{$u$} (P3);
                    \draw[-Stealth] (P0) edge [draw=none] node[midway, fill=white]{${\rr^*}$} (P4);
                    \draw[-Stealth] (P1) edge [color={rgb,255:red,92;green,92;blue,214}, dashed] node[auto]{${u'}$} (P4);
                    \draw[-Stealth] (P1) edge [bend right=-18] node[auto]{${\hat{u}}$} (P6);
                    \draw[-Stealth] (P2) edge [draw=none] node[midway, fill=white]{$\meq$} (P4);
                    \draw[-Stealth] (P3) edge [swap, color={rgb,255:red,92;green,92;blue,214}, dashed] node[auto]{${v'}$} (P4);
                    \draw[-Stealth] (P3) edge [swap, bend right=18] node[auto]{${\hat{v}}$} (P6);
                    \draw[-Stealth] (P4) edge [color={rgb,255:red,92;green,92;blue,214}, dashed] node[auto]{${w'}$} (P6);
                    \draw[-Stealth] (P5) edge [draw=none] node[midway, fill=white]{$\meq$} (P4);
                \end{tikzpicture}
            \end{center}
            \caption{Illustration of the notion of quadruple factorable through right-reversing.}
            \label{fig:completeness}
        \end{figure}
        The equivalence between the notion of completeness and the reduction rule is stated by the following proposition:
        \begin{proposition}\label{prop:comp_redrule}
            Let $\pres{S}{R}^+$ be a right-complemented presentation. Then right-reversing is complete if and only if the reduction rule holds.
        \end{proposition}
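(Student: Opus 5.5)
For the direction ``complete $\Rightarrow$ reduction rule'', I would specialise completeness to single letters. Let $a,b\in S$ and $X,Y\in S^*$ with $aX\meq bY$. Then the quadruple $(a,b,Y,X)$ satisfies $a\hat v\meq b\hat u$ with $\hat v = X$ and $\hat u = Y$. Completeness gives a positive-negative word $v'\fin{u'}$ with $\fin{a}b\rr^* v'\fin{u'}$, and a word $W$ with $X\meq v'W$ and $Y\meq u'W$.

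The set of reversings of $\fin ab$ is tiny, so I would finish with a case split.
\begin{itemize}
\item If $a=b$, the only step deletes $\fin ab$. Hence $v'=u'=\varepsilon=\rc{a}{a}$, and $W$ is the required word.
\item If $a\neq b$, the word $\fin ab$ is not positive-negative, so at least one step must occur. The only possible step replaces $\fin ab$ by $\rc{a}{b}\fin{\rc{b}{a}}$, and it is available only when the complements are defined. After it the word is already positive-negative and no further step applies. So $v'=\rc{a}{b}$, $u'=\rc{b}{a}$, and we get $X\meq\rc{a}{b}W$ and $Y\meq\rc{b}{a}W$, which is exactly Definition \ref{defi:reduction_rule}.
\end{itemize}

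For the converse ``reduction rule $\Rightarrow$ complete'', I would adapt the proof of Lemma \ref{lemm:reduction}\eqref{reduc:conlcm} to produce reversing diagrams instead of mere common multiples. Given $u\hat v\meq v\hat u$, the empty cases are immediate: if $u=\varepsilon$, then $\fin uv=v$ is already positive-negative, and we take $w'=\hat v$. Otherwise write $u=a\tilde u$ and $v=b\tilde v$. The reduction rule gives a word $W_2$ with
\[
\tilde u\hat v\meq\rc{a}{b}W_2 \hspace{3ex}\text{ and }\hspace{3ex} \tilde v\hat u\meq\rc{b}{a}W_2 .
\]
In particular $\fin ab\rr^1\rc{a}{b}\fin{\rc{b}{a}}$, so that
\[
\fin{\tilde u}\,\fin ab\,\tilde v\rr^1\fin{\tilde u}\,\rc{a}{b}\,\fin{\rc{b}{a}}\,\tilde v .
\]
The argument then proceeds in three steps.
\begin{itemize}
\item Apply the induction hypothesis to the pair $(\tilde u,\rc{a}{b})$ with common multiple $\tilde u\hat v\meq\rc{a}{b}W_2$. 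This reverses $\fin{\tilde u}\rc{a}{b}$ to some $v_1'\fin{\tilde u_1}$ and factors the remaining data through it.
\item Do the same for $(\rc{b}{a},\tilde v)$, obtaining $\tilde v_1\fin{u_1'}$.
\item Left-cancellativity (Lemma \ref{lemm:reduction}\eqref{reduc:lcancel}) shows that $\tilde u_1$ and $\tilde v_1$ have $W_2$ as a common multiple. A third application of the induction hypothesis, to $\fin{\tilde u_1}\tilde v_1$, finishes the diagram.
\end{itemize}
The three reversings glue, as in Figure \ref{fig:reduction_rule}, into a reversing of $\fin uv$ to $v_1'v_2'\fin{u_1'u_2'}$. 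The factorisation words also glue, by cancelling on the left.

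The main obstacle is choosing the induction parameter. Without homogeneity the lengths of the words $\rc{s}{t}$ are arbitrary, so the middle pair $(\tilde u_1,\tilde v_1)$ need not be shorter than $(u,v)$. What does decrease is the ``remaining'' common multiple: for the middle pair it is represented by $W_2$, where $aW_2$ is a left-divisor of the original $u\hat v$. I would therefore first try induction on a length-like quantity of the common multiple, such as its length when the presentation is homogeneous (Remark \ref{rem:homo_strong}), or a noetherianity witness in general. This would be combined lexicographically with $\wl{u}_S+\wl{v}_S$ to handle the two side pairs, where the multiple is unchanged but one word loses a letter.

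If no such witness is available, the fallback is the purely combinatorial route of \cite{dehornoy_foundations_2015}, II.4: induct on the number of elementary derivation steps from $u\hat v$ to $v\hat u$. One shows that a single application of a relation $s\rc{s}{t}=t\rc{t}{s}$ can be absorbed into a reversing square, using the reduction rule only to guarantee that the complements encountered exist.
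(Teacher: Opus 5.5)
Your forward direction is correct and is the paper's argument. The case split on $\fin{a}b$ just makes explicit the paper's appeal to uniqueness of the reversing diagram.

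Your main route for the converse is also the paper's. The paper simply points to the proof of Lemma~\ref{lemm:reduction}\eqref{reduc:conlcm}, which inducts on $l=\wl{uW_0}_S=\wl{vW_1}_S$ and so silently assumes homogeneity. With such a measure your three-step induction works, but you misidentify what decreases. For a side pair such as $(\tilde u,\rc{a}{b})$, the common multiple is $\tilde u\hat v$, with $a\cdot\tilde u\hat v\meq u\hat v$. It is a proper right-divisor of the original multiple, not the same one. That is why the length (homogeneous case) or a \emph{right}-noetherianity witness drops for all three pairs. Your tie-breaker would not have helped: $\wl{\tilde u}_S+\wl{\rc{a}{b}}_S\ge\wl{u}_S+\wl{v}_S$ as soon as $\wl{\rc{a}{b}}_S>\wl{v}_S$.

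The genuine gap is the general case, and no fallback can close it: without such a hypothesis, ``reduction rule $\Rightarrow$ complete'' is false. Here is a counterexample.
\begin{itemize}
\item Take $S=\{a,b,c,z\}$ with relations $aba=bab$, $bcb=cbc$, $cac=aca$, $az=z$, $bz=z$, $cz=z$. This presentation is right-complemented, with $\rc{s}{z}=z$ and $\rc{z}{s}=\varepsilon$.
\item Let $A$ be the $\widetilde{A_2}$ Artin--Tits monoid. The presented monoid is $A\sqcup\{z^kw : k\ge1,\ w\in A\}$, with $x\cdot z^kw=z^kw$ for $x\in A$ and $z^kw\cdot z^{k'}w'=z^{k+k'}w'$.
\item The reduction rule holds. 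If the common value of $sX\meq tY$ lies in $A$, so do $s,t,X,Y$, and this is the Brieskorn--Saito rule. Otherwise left multiplication by $A$ is trivial on the $z$-part, so $W=Y$ works if $t=z$ and $W=X$ works otherwise. The case $s=t$ is left-cancellation, which is clear on these normal forms.
\item Yet $bc\cdot z\meq z\meq a\cdot z$. The word $\fin{bc}a$ involves only $a,b,c$, so it reverses forever as in Example~\ref{ex:rrtermination}. Hence $(bc,a,z,z)$ is not $\rr$-factorable.
\end{itemize}

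In a pure derivation-length induction, the failure appears when you split off first letters. You then get equivalences such as $\rc{c}{a}W_1\meq\rc{c}{b}W_2$ that come with no shorter derivation. So the outer induction on the multiple, described before Proposition~\ref{prop:cube_complete}, is not optional. The converse needs an added hypothesis, homogeneity or right-noetherianity as in that proposition. The paper's own proof has the same hidden assumption.
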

        \begin{proof}
            First, assume right-reversing is complete. Let $X,Y \in S^*$ and $a,b$ be two letters in $S$ such that $aX \meq bY$. Then, by completeness, the  $(a,b,Y,X)$ is $\rr$-factorable. So there exist $a', b'$, and $W$ such that $\fin{a}b \rr^* b'\fin{a'}$ and $X \meq a'W$ and $Y \meq b'W$. But by uniqueness of the right-reversing diagram, we necessarily have $a' = \rc{a}{b}$ and $b' = \rc{b}{a}$, and the reduction rule holds.\\
            The converse is essentially the of proof of point \eqref{reduc:conlcm} of Lemma \ref{lemm:reduction} . Indeed, the common multiple built in the proof is obtained by right-reversing.
        \end{proof}
        The completeness of right-reversing allows to compute the right-lcm of two words $u$ and $v$.
        \begin{proposition}\label{prop:complete_compute_lcm}
            Let $\pres{S}{R}^+$ be a right-complemented presentation such that right-reversing is complete. Then for all $u,v \in S^*$, $u$ and $v$ have a right-lcm if and only if there exist $u',v'$ such that $\fin{u}v \rr^* v'\fin{u'}$, and in this case, $uv' \meq vu'$ is the right-lcm of $u$ and $v$.
        \end{proposition}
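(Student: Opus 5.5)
The proof has two directions, and completeness is used only in the forward direction. Throughout I read the conclusion of reversing in the paper's diagram convention: $\fin{u}v \rr^* v'\fin{u'}$ yields $uv' \meq vu'$.

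\emph{Reversing gives a common multiple.} First I check that a single reversing step preserves equality in the enveloping group. The relation $s\rc{s}{t} \meq t\rc{t}{s}$ gives $\fin{s}t = \rc{s}{t}\fin{\rc{t}{s}}$ there. For positive words I want a statement in the monoid, so I prove by induction on the number of steps the following: if $\fin{u}v \rr^* v'\fin{u'}$, then $uv' \meq vu'$. The induction is the usual tiling of the reversing diagram by the elementary squares of Figure~\ref{fig:right_reversing}. Each square commutes in $M$, and the boundary of the whole diagram reads $uv'$ and $vu'$. Hence if reversing terminates, $uv'$ is a common right-multiple of $u$ and $v$.

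\emph{A common multiple makes reversing terminate.} Suppose now that $u$ and $v$ have a right-lcm, say $m \meq u\hat v \meq v\hat u$. By completeness (Definition~\ref{defi:completeness}), the quadruple $(u,v,\hat u,\hat v)$ is $\rr$-factorable. So there are words $u',v',w'$ with
\[
\fin{u}v \rr^* v'\fin{u'}, \qquad \hat v \meq v'w', \qquad \hat u \meq u'w'.
\]
This already gives the "only if" direction: reversing reaches a positive–negative word. By the first part, $uv' \meq vu'$ is a common right-multiple, and $m \meq uv'w'$. So $uv'$ left-divides $m$. Since $m$ is the right-lcm, $m$ also left-divides $uv'$.

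\emph{Identifying the lcm.} To conclude $uv' \meq m$ I need antisymmetry of $\ldiv$. By Proposition~\ref{prop:comp_redrule}, completeness gives the reduction rule. Lemma~\ref{lemm:reduction} then gives left-cancellativity and conditional right-lcms. Nontrivial invertibles are excluded because $R$ has no $\varepsilon$-relations: no nonempty word can be derived to $\varepsilon$, since each derivation step replaces one side of a relation by the other, and both sides are nonempty. With left-cancellativity and no nontrivial invertibles, $\ldiv$ is a partial order, so $uv' \meq m$.

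\emph{The "if" direction and uniqueness.} If $\fin{u}v \rr^* v'\fin{u'}$, then $u$ and $v$ have a common multiple, so by conditional right-lcms they have a right-lcm $m$. Apply the previous argument to $m$. By Lemma~\ref{lemm:rr_confluence}, positive–negative words are irreducible, so the terminal word is unique and the pair $(u',v')$ is well defined. Therefore the reversing result obtained from completeness is the same as the given one, and $uv' \meq vu' \meq m$.

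The main obstacle is this last identification: ruling out invertibles so that mutual divisibility forces equality, and using confluence to make sure the reversing output from completeness coincides with the given one.
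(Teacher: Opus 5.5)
Your proof is correct. The forward half is the paper's argument: completeness applied to the lcm square $u\hat v \meq v\hat u$ gives $\fin{u}v \rr^* v'\fin{u'}$ with $\hat v \meq v'w'$, so $uv'$ is a common right-multiple that left-divides the lcm. You go beyond the paper by justifying the final equality $uv' \meq m$ explicitly. You get left-cancellativity from Proposition~\ref{prop:comp_redrule} and Lemma~\ref{lemm:reduction}, and you exclude nontrivial invertibles because no nonempty word is $\meq$-equivalent to $\varepsilon$. The paper's phrase ``by minimality of the lcm'' uses this antisymmetry silently.

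The converse is where the two routes differ. You first obtain a right-lcm $m$ from the conditional right-lcm property of Lemma~\ref{lemm:reduction}~\eqref{reduc:conlcm}. You then rerun the forward argument on $m$ and use Lemma~\ref{lemm:rr_confluence} to identify the terminal word produced by completeness with the given $v'\fin{u'}$. The paper skips the existence step. It applies completeness to an \emph{arbitrary} common right-multiple $u\hat v \meq v\hat u$ and notes, by the same confluence, that the factorization passes through the given $v'\fin{u'}$. Hence $uv'$, itself a common right-multiple, left-divides every common right-multiple, and is a right-lcm by definition. That route needs only completeness and confluence. It does not depend on the reduction rule or on Lemma~\ref{lemm:reduction} at all, and it needs no antisymmetry. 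Yours costs more machinery, but it makes the uniqueness of the lcm, and so the identification $uv' \meq m$, fully explicit.

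One correction to your framing: completeness is not used only in the forward direction. Your ``if'' direction uses it twice, once through Lemma~\ref{lemm:reduction} and once when you rerun the forward argument on $m$.
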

        \begin{proof}\label{prop:complete_lcm}
            Assume $u$ and $v$ have a right-lcm $u\hat{v}=v\hat{u}$. Then, by completeness, the quadruple $(u,v,\hat{u},\hat{v})$ is $\rr$-factorable. So there exist $u',v',w'$ such that $\fin{u}v \rr^* v'\fin{u'}$ and $v'w' \meq \hat{v}$ so $uv'w' \meq u\hat{v}$, i.e. $uv'$ is a left-divisor of $u\hat{v}$. By minimality of the lcm, we have $uv' \meq u\hat{v}$ is the right-lcm of $u$ and $v$.\\
            Conversely, assume that there exist $u',v'$ such that $\fin{u}v \rr^* v'\fin{u'}$. Then for every common right-multiple of $u$ and $v$, written $u\hat{v} = v\hat{u}$, the quadruple $(u,v,\hat{u},\hat{v})$ is $\rr$-factorable. Then, there exists $w'$ such that $uv'w' \meq u\hat{v}$ so $uv'$ left-divides $u\hat{v}$ and it is a right-lcm of $u$ and $v$.   
        \end{proof}
        To conclude this quick presentation of the right-reversing algorithm, we need some criterion to know if right-reversing is complete for some right-complemented presentation $\pres{S}{R}^+$, one that works on Artin-Tits monoids. This criterion will use an extension of the right-complement $\rcb$.
        \begin{definition}
            Let $\pres{S}{R}^+$ be a right-complemented presentation with associated right-complement $\rcb$. Let $u,v \in S^*$. If there exist $u',v' \in S^*$ such that $\fin{u}v \rr^* v'\fin{u'}$, then such $u'$ and $v'$ are unique because of Lemma \ref{lemm:rr_confluence}. In this case, we set $\erc{u}{v}=u'$ and $\erc{v}{u}=v'$. $\ercb$ is called the \defit{extended syntactic right-complement} associated with the right-complemented presentation $\pres{S}{R}^+.$ It is a partial map from $S^*\times S^*$ to $S^*$\\
            Let $u,v,w \in S$ such that $\erc{u}{v}, \erc{u}{w}$ are defined, and $\erc{\erc{u}{v}}{\erc{u}{w}}$ is defined. Then we set $\erct{u}{v}{w} = \erc{\erc{u}{v}}{\erc{u}{w}}$.
        \end{definition}
        \begin{remark}
            We can swap the entries for the syntactic right-complement $\ercb$ and it is still defined. However, we can't do any permutation of $u,v,w$ for the syntactic right-complement $\erctb$. We can swap the last two entries, that is, if $\erct{u}{v}{w}$ is defined, then so is $\erct{u}{w}{v}$ (but they are not equal). The swapping of the first two entries is the object of the criterion that we present now.
        \end{remark}
        \begin{definition}
            Let $\pres{S}{R}^+$ be a right-complemented presentation with associated right-complement $\rcb$. Let $u,v,w \in S^*$. We say that the $\rcb$-cube condition holds for $u,v,w$ if either both $\erct{u}{v}{w}$ and $\erct{v}{u}{w}$ are defined and $\meq$-equivalent, or neither is defined. We say that the $\rcb$-cube condition holds on $S$ if it holds for all $u,v,w \in S$.
        \end{definition}
        The name "cube" refers to the characterization of the $\rcb$-cube condition illustrated in Figure \ref{fig:cube_condition}, called simply the cube condition (see Lemma II.4.55 in \cite{dehornoy_foundations_2015} for the proof)
        \begin{lemma} The $\rcb$-cube condition holds on $S$ if and only if any diagram on the left of Figure \ref{fig:cube_condition}, with $u,v,w \in S$, can be completed into the cubic diagram on the right.
        \end{lemma}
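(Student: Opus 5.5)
The plan is to read both sides of the equivalence purely in terms of the extended complement $\ercb$ and $\erctb$, so that the lemma becomes a dictionary between edges of the cube and values of $\erctb$. I take the left diagram of Figure \ref{fig:cube_condition} to be three letters $u,v,w\in S$ issued from a common vertex, together with the three elementary reversing squares for the pairs $(u,v)$, $(u,w)$, $(v,w)$. These squares produce the vertices $u\lcm v$, $u\lcm w$, $v\lcm w$ and the edges $\rc{u}{v},\rc{v}{u},\dots$. I take completing into the cube to mean adding three more reversing faces, one at each of the vertices $u$, $v$, $w$, together with a far vertex at which the three faces close up, with their edges agreeing up to $\meq$.

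First I fix the meaning of each new edge. The face glued at vertex $u$ is the reversing diagram of $\fin{\rc{u}{v}}\rc{u}{w}$. When it terminates, its edge leaving the vertex $u\rc{u}{v}$ is $\erc{\rc{u}{v}}{\rc{u}{w}}=\erct{u}{v}{w}$, and its edge leaving $u\rc{u}{w}$ is $\erct{u}{w}{v}$. By the defining relation $u\rc{u}{v}\meq v\rc{v}{u}$, the vertex $u\rc{u}{v}$ is the same as the vertex $v\rc{v}{u}$. The face glued at $v$ therefore also issues an edge $\erct{v}{u}{w}$ from that vertex, in the same $w$-direction. The cube closes along this edge precisely when $\erct{u}{v}{w}\meq\erct{v}{u}{w}$. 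Doing the same at the other two middle vertices shows that the cube closes exactly when the $\rcb$-cube condition holds for the triples $(u,v,w)$, $(u,w,v)$ and $(v,w,u)$, the last being the variant $(w,v,u)$ with its first two entries swapped. The required commutations of the faces themselves come for free, since each face is a reversing diagram and so commutes in $M$, using only that $\rr^*$ preserves $\meq$.

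With this dictionary both directions are short. For the forward direction, assume the $\rcb$-cube condition on $S$ and apply it to all permutations of $(u,v,w)$. In the case where both values of $\erctb$ are defined and $\meq$-equivalent, the three edges at each middle vertex agree, and I glue the faces. I also need the three paths to the far vertex to represent the same element. This follows by chaining the face equalities: for instance $u\rc{u}{v}\erct{u}{v}{w}\meq u\rc{u}{w}\erct{u}{w}{v}$ from the face at $u$, and similarly from the faces at $v$ and $w$, combined with the matching of edges. For the converse, a completed cube gives in particular, at the vertex $u\lcm v$, two $w$-edges that are both defined and equal up to $\meq$, and similarly at the other middle vertices. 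Running this over all orderings gives the condition for every triple.

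The main obstacle is the case where neither value of $\erctb$ is defined, together with the precise reading of ``any diagram can be completed'' in Figure \ref{fig:cube_condition}. I would handle it by stating explicitly that the left diagram is assumed to have all its faces existing; the undefined cases then correspond exactly to diagrams in which some face is missing, so there is nothing to complete. I would also treat separately the degenerate situation where two of $u,v,w$ coincide, where $\rc{s}{s}=\varepsilon$ collapses faces. Since the paper defers to Lemma II.4.55 of \cite{dehornoy_foundations_2015}, I would follow that reference for the exact bookkeeping of these boundary cases.
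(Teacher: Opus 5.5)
Your dictionary is correct: a closed cube encodes $\erct{u}{v}{w}\meq\erct{v}{u}{w}$, $\erct{u}{w}{v}\meq\erct{w}{u}{v}$ and $\erct{v}{w}{u}\meq\erct{w}{v}{u}$. The paper itself only cites Lemma II.4.55 of \cite{dehornoy_foundations_2015}, and unfolding the definitions as you do is the right kind of argument. The gap is that you misread the hypothesis diagram, and under your reading the lemma is false.

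The left diagram of Figure~\ref{fig:cube_condition} is not the three squares at the origin. It consists of the squares for $(v,w)$ and $(u,w)$, together with the second-level face at $w$: the reversing of $\fin{\erc{w}{v}}\erc{w}{u}$, with output edges $\erct{w}{v}{u}$ and $\erct{w}{u}{v}$. The square for $(u,v)$ is dashed on the right, i.e.\ it is part of what must be produced.

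Type $\widetilde{A_2}$ refutes your reading. All $\rc{s}{t}$ are defined, so your three squares exist for $(a,b,c)$. But $\erct{a}{b}{c}=\erc{ba}{ca}$ is undefined: a common right-multiple of $ba$ and $ca$, left-multiplied by $a$, would be a common right-multiple of $bab$ and $cac$, hence of $a,b,c$, and none exists. The same holds for every ordering. So the $\rcb$-cube condition holds vacuously, yet none of your faces at $a$, $b$, $c$ exists.

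Your proposed fix does not help. Under your reading, an undefined value of $\erctb$ is a missing face of the completion, not of the hypothesis. The converse fails as well. Take $R=\{ab=ba,\ ab=ca\}$: since $\rc{b}{c}$ is undefined, your completion property holds vacuously on $\{a,b,c\}$. Yet $\erct{a}{b}{c}=\erc{b}{b}=\varepsilon$ is defined while $\erct{b}{a}{c}$ is not.

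With the correct reading, the ``both or neither'' clause is exactly what drives the proof, in both directions.

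\emph{Forward direction.} The left diagram makes $\erct{w}{u}{v}$ and $\erct{w}{v}{u}$ defined. The condition for $(w,u,v)$ and for $(w,v,u)$ then makes $\erct{u}{w}{v}$ and $\erct{v}{w}{u}$ defined and $\meq$-equivalent to them. Since $\erct{u}{w}{v}=\erc{\erc{u}{w}}{\erc{u}{v}}$, this yields the missing square $\erc{u}{v}$ and the faces at $u$ and $v$. Hence $\erct{u}{v}{w}$ is defined, and the condition for $(u,v,w)$ closes the last middle vertex.

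\emph{Converse.} Suppose $\erct{u}{v}{w}$ is defined. Then the squares for $(u,v)$ and $(u,w)$, together with the face at $u$, form a left diagram with $u$ in the role of $w$. Completing it produces the face at $v$, whose edge leaving $u\rc{u}{v}\meq v\rc{v}{u}$ is $\erct{v}{u}{w}$. So $\erct{v}{u}{w}$ is defined and $\meq\erct{u}{v}{w}$.

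Your converse only reads off $\meq$ at the middle vertices of cubes that happen to exist. It never shows that definedness of one side forces definedness of the other.
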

        \begin{figure}[h!]
            \resizebox{\linewidth}{!}{%
                \begin{tikzpicture}\label{figimported:cube_condition1}
                    \node[scale = 0.5] (P0) at (3, -1) {};
                    \node[scale = 0.5] (P1) at (6, -1) {};
                    \node[scale = 0.5] (P2) at (10, -1) {};
                    \node[scale = 0.5] (P3) at (13, -1) {};
                    \node[scale = 0.5] (P4) at (1, -2) {};
                    \node[scale = 0.5] (P5) at (8, -2) {};
                    \node[scale = 0.5] (P6) at (11, -2) {};
                    \node[scale = 0.5] (P7) at (3, -4) {};
                    \node[scale = 0.5] (P8) at (6, -4) {};
                    \node[scale = 0.5] (P9) at (10, -4) {};
                    \node[scale = 0.5] (P10) at (13, -4) {};
                    \node[scale = 0.5] (P11) at (1, -5) {};
                    \node[scale = 0.5] (P12) at (4, -5) {};
                    \node[scale = 0.5] (P13) at (8, -5) {};
                    \node[scale = 0.5] (P14) at (11, -5) {};
                    \draw[-Stealth] (P0) edge [] node[auto]{$v$} (P1);
                    \draw[-Stealth] (P0) edge [swap] node[auto]{$u$} (P4);
                    \draw[-Stealth] (P0) edge [] node[auto]{$w$} (P7);
                    \draw[-Stealth] (P0) edge [sloped, draw=none] node[auto, allow upside down]{${\rr^*}$} (P8);
                    \draw[-Stealth] (P0) edge [sloped, draw=none] node[auto, allow upside down]{${\rr^*}$} (P11);
                    \draw[-Stealth] (P1) edge [swap, color={rgb,255:red,92;green,92;blue,214}] node[auto, pos=0.2]{${\erc{v}{w}}$} (P8);
                    \draw[-Stealth] (P2) edge [] node[auto]{$v$} (P3);
                    \draw[-Stealth] (P2) edge [swap] node[auto]{$u$} (P5);
                    \draw[-Stealth] (P2) edge [sloped, draw=none] node[auto, allow upside down, pos=0.6]{${\rr^*}$} (P6);
                    \draw[-Stealth] (P2) edge [color={rgb,255:red,204;green,204;blue,204}] node[auto]{} (P9);
                    \draw[-Stealth] (P3) edge [color={rgb,255:red,92;green,92;blue,214}, dashed] node[auto, pos=0.7]{${\erc{v}{u}}$} (P6);
                    \draw[-Stealth] (P3) edge [color={rgb,255:red,92;green,92;blue,214}] node[auto]{${\erc{v}{w}}$} (P10);
                    \draw[-Stealth] (P3) edge [sloped, draw=none] node[auto, allow upside down, pos=0.4]{${\rr^*}$} (P14);
                    \draw[-Stealth] (P4) edge [color={rgb,255:red,92;green,92;blue,214}] node[auto, pos=0.1]{${\erc{u}{w}}$} (P11);
                    \draw[-Stealth] (P5) edge [swap, color={rgb,255:red,92;green,92;blue,214}, dashed] node[auto]{${\erc{u}{v}}$} (P6);
                    \draw[-Stealth] (P5) edge [color={rgb,255:red,92;green,92;blue,214}] node[auto, pos=0.8]{${\erc{u}{w}}$} (P13);
                    \draw[-Stealth] (P5) edge [sloped, draw=none] node[auto, allow upside down, pos=0.3]{${\rr^*}$} (P14);
                    \draw[-Stealth] (P6) edge [color={rgb,255:red,214;green,92;blue,92}, dashed] node[midway, fill=white]{${\erct{u}{v}{w}\meq\erct{v}{u}{w}}$} (P14);
                    \draw[-Stealth] (P7) edge [color={rgb,255:red,92;green,92;blue,214}] node[auto]{${\erc{w}{v}}$} (P8);
                    \draw[-Stealth] (P7) edge [color={rgb,255:red,92;green,92;blue,214}] node[auto]{${\erc{w}{u}}$} (P11);
                    \draw[-Stealth] (P7) edge [sloped, draw=none] node[auto, allow upside down, pos=0.7]{${\rr^*}$} (P12);
                    \draw[-Stealth] (P8) edge [color={rgb,255:red,214;green,92;blue,92}] node[auto, pos=0.7]{${\erct{w}{v}{u}}$} (P12);
                    \draw[-Stealth] (P9) edge [color={rgb,255:red,204;green,204;blue,204}] node[auto]{} (P10);
                    \draw[-Stealth] (P9) edge [color={rgb,255:red,204;green,204;blue,204}] node[auto]{} (P13);
                    \draw[-Stealth] (P10) edge [color={rgb,255:red,214;green,92;blue,92}] node[auto]{${\erct{w}{v}{u}}$} (P14);
                    \draw[-Stealth] (P11) edge [swap, color={rgb,255:red,214;green,92;blue,92}] node[auto]{${\erct{w}{u}{v}}$} (P12);
                    \draw[-Stealth] (P13) edge [swap, color={rgb,255:red,214;green,92;blue,92}] node[auto]{${\erct{w}{u}{v}}$} (P14);
                \end{tikzpicture}
                }
            \caption{The $\rcb$-cube condition.}
            \label{fig:cube_condition}
        \end{figure}
        When trying to prove the reduction rule, (or the completeness of right-reversing), the proof works on a nested induction, first on the length of the common multiple $aX=bY$, and then on the length of a shortest derivation from $aX$ to $bY$. In the induction step appears the relation $aX \meq cZ \meq bY$, and applying the reduction rule to $aX \meq cZ$ and $cZ \meq bY$ gives two expressions for $Z$, $\rc{c}{a}W_1\meq\rc{c}{b}W_2$. Then, the induction hypothesis allows to use the reduction rule on the equation $\rc{c}{a}W_1\meq\rc{c}{b}W_2$, or in other words, to compute $\erc{\rc{c}{a}}{\rc{c}{b}}=\erct{c}{b}{a}$. To complete the induction step, we need to show that $\rc{a}{b}$ left-divides $X$ and that $\rc{b}{a}$ left-divides $Y$. Then, the $\rcb$-cube appears naturally, and allows to complete the induction step. See in \cite{dehornoy_foundations_2015} the Proposition 4.62 and the proof in the Appendix for details.\\
        \begin{proposition}\label{prop:cube_complete}
            Let $\pres{S}{R}^+$ be a right-complemented presentation that is right-noetherian, and the $\rcb$-cube condition holds on $S$, then right-reversing is complete for $\pres{S}{R}^+$.
        \end{proposition}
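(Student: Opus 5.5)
By Proposition \ref{prop:comp_redrule}, it suffices to prove that the reduction rule of Definition \ref{defi:reduction_rule} holds. Concretely: whenever $aX \meq bY$ with $a,b\in S$, we want $\rc{a}{b}$ and $\rc{b}{a}$ to be defined, together with a word $W$ such that $X\meq \rc{a}{b}W$ and $Y\meq\rc{b}{a}W$. Since the presentation is right-noetherian, we can argue by well-founded induction on the element $m$ represented by $aX$. The natural order here is proper right-divisibility $\strldivt$; choosing this is consistent because cutting off a prefix produces a proper right-divisor, provided the prefix is non-invertible, which holds since there are no $\varepsilon$-relations. Within a fixed $m$, we perform a secondary induction on the length $n$ of a shortest derivation from $aX$ to $bY$.

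\emph{Base cases.} If $n=0$, then $a=b$ and $X\weq Y$. We take $W=X$ and use $\rc{a}{a}=\varepsilon$.

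If $n\geq 1$ and the first letter never changes along the derivation, then every step acts inside $X$. Hence $X\meq Y$ and $a=b$, and the previous case applies.

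If the first letter changes in a single step, then that step applies a relation $a\rc{a}{b}=b\rc{b}{a}$ at the front. So $X\weq \rc{a}{b}Z$ and $Y\weq\rc{b}{a}Z$, possibly after some derivation steps inside the tails, which stay equivalent. In that case $W=Z$ works.

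\emph{General step.} Pick an intermediate word $cZ$ in the derivation whose first letter is $c$, so that $aX\meq cZ\meq bY$ with both derivations shorter than $n$. The secondary induction hypothesis yields words $W_1,W_2$ with
\[
X\meq\rc{a}{c}W_1,\quad Z\meq \rc{c}{a}W_1\meq\rc{c}{b}W_2,\quad Y\meq\rc{b}{c}W_2 .
\]
The element represented by $Z$ is a proper right-divisor of $m$. The primary induction hypothesis therefore gives the reduction rule, and hence completeness, for all equalities representing elements below $m$. Completeness in turn gives the extended form: reversing $\fin{\rc{c}{a}}\rc{c}{b}$ terminates, so $\erct{c}{a}{b}$ and $\erct{c}{b}{a}$ are defined, and there is some $W_3$ with
\[
W_1\meq\erct{c}{a}{b}W_3 \quad\text{and}\quad W_2\meq\erct{c}{b}{a}W_3 .
\]
This lifting from single letters to words uses that the induction hypothesis applies to all relevant elements below $m$, in the style of the proof of Lemma \ref{lemm:reduction}.

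Now the $\rcb$-cube condition on the triple $(c,a,b)$ comes into play. In the permuted form of the lemma, the definedness of $\erct{c}{a}{b}$ forces $\erct{a}{c}{b}$ to be defined as well. In particular $\rc{a}{b}$ and $\rc{b}{a}$ exist, and the cube closes:
\[
\rc{a}{c}\,\erct{c}{a}{b}\meq \rc{a}{b}\,\erct{a}{b}{c},\qquad \rc{b}{c}\,\erct{c}{b}{a}\meq\rc{b}{a}\,\erct{b}{a}{c},
\]
and the two far edges $\erct{a}{b}{c}$ and $\erct{b}{a}{c}$ are $\meq$-equivalent. Setting $W=\erct{a}{b}{c}W_3$ then gives $X\meq\rc{a}{b}W$ and $Y\meq\rc{b}{a}W$, which is the reduction rule.

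\emph{Main obstacle.} The difficult part is the bookkeeping in the induction. The cube condition is only assumed on letters, while the faces of the cube involve words, namely the $\ercb$'s. To close the cube for words, I would invoke the induction hypothesis on elements strictly smaller than $m$ and assemble the letter cubes as in the proof of Lemma \ref{lemm:reduction}; this is exactly where right-noetherianity (well-foundedness) is essential. The secondary induction only covers shorter derivations representing the same element, so I would need to check carefully that every call to the reduction rule at the level of words concerns a proper right-divisor of $m$. Following the Appendix argument of \cite{dehornoy_foundations_2015}, I would state and prove the extended cube property for words simultaneously within the same induction.
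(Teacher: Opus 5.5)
Your proof is correct and follows essentially the paper's own sketched argument: a nested induction, an intermediate word $cZ$ yielding $\rc{c}{a}W_1\meq\rc{c}{b}W_2$, the induction hypothesis at the smaller element $[Z]$ to obtain $\erct{c}{a}{b}$, and the $\rcb$-cube condition to close the step; your well-founded induction along $\strldivt$ is the right way to realise the paper's ``length of the common multiple'', which only makes sense for homogeneous presentations. The worry in your last paragraph is unnecessary: the $\rcb$-cube condition on letters is already phrased through $\ercb$ of the words $\rc{c}{a},\rc{c}{b}$, so its instances $(c,a,b)$, $(c,b,a)$, $(a,b,c)$ give exactly the relations you display, and no cube condition for words has to be proved.
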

        The next step towards computability of right-lcms is to discuss termination issues.
    \subsection{Termination}
        We have seen in Remark \ref{remark:termination} that successive applications of the reduction rule, and equivalently the right-reversing algorithm, can sometimes continue indefinitely. In the Example \ref{ex:termination}, we have seen that the process can be stopped because of the existence of a witness that the process will never end. 
        Here is the example presented with the right-reversing algorithm:
        \begin{example}\label{ex:rrtermination}
            The right-reversing algorithm applied to the word $\fin{bc}a$ with the right-complemented presentation of $\widetilde{A_2}$ never ends because it reappear in its own right-reversing process. In figure \ref{fig:rr_witness}, we see that when starting the right-reversing process with the signed word $\fin{bc}a$, we get the a signed word conating the factor $\fin{a}bc$ after three steps. If we keep going, after three more steps, we will get $\fin{bc}a$ as a subword. This shows that the process will never end, and we can stop the process with the result that $bc$ and $a$ have no common right-multiple in the Artin-Tits monoid of type $\widetilde(A_2)$.
        \end{example}
        \begin{figure}[h!]
            \begin{center}
                \begin{tikzpicture}[scale = 0.8]\label{figimported:rr_witness}
                    \node[scale = 0.5] (P0) at (1, -1) {};
                    \node[scale = 0.5] (P1) at (5, -1) {};
                    \node[scale = 0.5] (P2) at (5, -2) {};
                    \node[scale = 0.5] (P3) at (1, -3) {};
                    \node[scale = 0.5] (P4) at (3, -3) {};
                    \node[scale = 0.5] (P5) at (5, -3) {};
                    \node[scale = 0.5] (P6) at (5, -4) {};
                    \node[scale = 0.5] (P7) at (3, -5) {};
                    \node[scale = 0.5] (P8) at (4, -5) {};
                    \node[scale = 0.5] (P9) at (5, -5) {};
                    \node[scale = 0.5] (P10) at (1, -6) {};
                    \node[scale = 0.5] (P11) at (2, -6) {};
                    \node[scale = 0.5] (P12) at (3, -6) {};
                    \draw[-Stealth] (P0) edge [color={rgb,255:red,214;green,92;blue,92}] node[auto]{$a$} (P1);
                    \draw[-Stealth] (P0) edge [color={rgb,255:red,214;green,92;blue,92}] node[auto]{$b$} (P3);
                    \draw[-Stealth] (P1) edge [] node[auto]{$b$} (P2);
                    \draw[-Stealth] (P2) edge [] node[auto]{$a$} (P5);
                    \draw[-Stealth] (P3) edge [] node[auto]{$a$} (P4);
                    \draw[-Stealth] (P3) edge [color={rgb,255:red,214;green,92;blue,92}] node[auto]{$c$} (P10);
                    \draw[-Stealth] (P4) edge [] node[auto]{$b$} (P5);
                    \draw[-Stealth] (P4) edge [] node[auto]{$c$} (P7);
                    \draw[-Stealth] (P5) edge [] node[auto]{$c$} (P6);
                    \draw[-Stealth] (P6) edge [] node[auto]{$b$} (P9);
                    \draw[-Stealth] (P7) edge [color={rgb,255:red,92;green,92;blue,214}] node[auto]{$b$} (P8);
                    \draw[-Stealth] (P7) edge [color={rgb,255:red,92;green,92;blue,214}] node[auto]{$a$} (P12);
                    \draw[-Stealth] (P8) edge [color={rgb,255:red,92;green,92;blue,214}] node[auto]{$c$} (P9);
                    \draw[-Stealth] (P10) edge [] node[auto]{$a$} (P11);
                    \draw[-Stealth] (P11) edge [] node[auto]{$c$} (P12);
                \end{tikzpicture}
            \end{center}
            \caption{The right-reversing diagram witnessing the non-termination of the right-reversing process starting with $\fin{bc}a$.}
            \label{fig:rr_witness}
        \end{figure}

        We want to formalize this notion of cyclic witness. We start with some notation and vocabulary.\\
        \begin{definition}
            Let $u,v, u', v'$ be words of $S^*$. We say that the pair $(u',v')$ is a \defit{reduction factor} (resp. \defit{strict reduction factor})  of the pair $(u,v)$, written $(u,v) \rf (u',v')$ (resp. $(u,v) \strrf (u',v')$) if there exist some signed words $w_1, w_2$ such that $\fin{u}v \rr^n w_1\fin{u'}v'w_n$ for some $n \ge 0$ (resp. $n > 0$).\\
            We say that a pair $(u,v)$ is \defit{periodic} if $(u,v) \strrf (u,v)$, and we say that it is \defit{eventually periodic} if there exists some $(u',v')$ periodic such that $(u,v) \rf (u',v')$.
            We say that a pair $(u,v)$ is \defit{failing} if there exist some letters of $S$, $(s,t)$ such that $(u,v) \rf (s,t)$ and $\rc{s}{t}$ is undefined.
        \end{definition}
        Failure and (eventual-)periodicity can effectively be detected in an algorithm. Starting with a failing pair $(u,v)$ and applying the right-reversing to the signed word $\fin{u}v$, a factor $\fin{s}t$ with $\rc{s}{t}$ undefined will appear Starting with a periodic pair $(u,v)$, we can read at each steps of the right-reversing all the subwords $\fin{u_1}v_1$ appearing and the word $\fin{u}v$ will appear eventually. Starting with an eventually periodic pair $(u,v)$, one can launch in parallel the algorithm to check periodicity on all subword $\fin{u_1}v_1$ appearing in the right-reversing of $(u,v)$ and one will eventually end. More efficient ways of detecting (eventual-)periodicity can be implemented based on the induction in Lemma \ref{lemm:trichotomy_gar} below (see the repository \url{https://github.com/domyrmininon/compute_gf}). Then, in order to have an lcm algorithm that always ends, we have to prove some kind of trichotomy between failing, eventual periodicity, and success of the algorithm. We need here an additional assumption, the existence of a finite Garside family. See \cite{dehornoy_foundations_2015} III for the definition of a Garside family in general. In the case of a right-complemented presentation with complete right-reversing, a family $G$ of elements of $M$ is a Garside family if and only if it generates $M$, and it is closed under right-lcm and right-complement (or equivalently, right-lcm and right-divisor) (see \cite{dehornoy_foundations_2015}, Proposition IV.2.40). The existence of a finite Garside family $G$ in this framework allows to give a new right-complemented presentation of the same monoid with generators $G$, and short relations, which largely simplifies the problem at the cost of computing this Garside family. As explained in the introduction, our work provides a way to avoid the computation of the whole Garside family. The main property of having a Garside family is presented in the following lemma.
        \begin{lemma}
            Let $M$ be a gcd monoid and $G\subset M$ be a family closed under right-lcm and right-divisor. Denote by $\wl{u}_G$ the minimal length of a product of elements of $G$ that is equal to $u$. Then, for all $u,v \in M$, if $u$ and $v$ have a right-lcm $uv'=vu'$ with $u',v' \in M$, then $\wl{u'}_G \le \wl{u}_G$ and $\wl{v'}_G \le \wl{v}_G$.
        \end{lemma}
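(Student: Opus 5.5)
The plan is a double induction on $\wl{u}_G$ and $\wl{v}_G$, reducing everything to the case where both $u$ and $v$ lie in $G$. The grid-filling argument of Lemma \ref{lemm:concat_lcm} supplies the induction, and closure of $G$ under right-lcm and right-divisor handles the base case.

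\emph{Base case.} Take $u,v\in G$ with a right-lcm $uv'=vu'$. Closure under right-lcm gives $u\lcm v\in G$. Since $u'$ right-divides $u\lcm v$, closure under right-divisor gives $u'\in G$, and likewise $v'\in G$. So $\wl{u'}_G\le 1$ and $\wl{v'}_G\le 1$. If $u=1$ or $v=1$, the complements are $1$ or the other element, and the inequality is clear.

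\emph{Induction.} Write $u=g_1\cdots g_p$ and $v=h_1\cdots h_q$ with $g_i,h_j\in G$, where $p=\wl{u}_G$ and $q=\wl{v}_G$. First fix $v=h\in G$ and induct on $p$ using Lemma \ref{lemm:concat_lcm} (in its symmetric form, splitting the first argument). Put $u=g_1u_1$. By that lemma, $u\lcm h$ exists if and only if $g_1\lcm h$ exists, say $g_1\lcm h=g_1h_1=hg_1'$, and $u_1\lcm h_1$ exists. Moreover the complement of $h$ in $u$ is $g_1'\cdot(\text{complement of } h_1 \text{ in } u_1)$, and the complement of $u$ in $h$ equals the complement of $u_1$ in $h_1$. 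By the base case, $g_1',h_1\in G$. By induction, the complement of $h_1$ in $u_1$ has $G$-length at most $p-1$, and the complement of $u_1$ in $h_1$ has $G$-length at most $1$. This gives the bounds $p$ and $1$.

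Then do the general $v$ by induction on $q$, again with Lemma \ref{lemm:concat_lcm}. Write $v=h_1v_1$ and $u\lcm h_1=uh_1'=h_1u_1'$. The previous step gives $\wl{u_1'}_G\le p$ and $\wl{h_1'}_G\le 1$. Then $u_1'\lcm v_1=u_1'v_1'=v_1u''$ exists, and the induction hypothesis gives $\wl{u''}_G\le p$ and $\wl{v_1'}_G\le q-1$. Finally $v'=h_1'v_1'$ has $G$-length at most $q$, and $u'=u''$ has $G$-length at most $p$.

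Two points need care. The first is applying Lemma \ref{lemm:concat_lcm} with the factorization on the correct side and tracking which complement is which; the paper states only one orientation, so the mirrored version must be checked by symmetry of its proof. The second is that the inductive hypothesis must be applied to elements whose $G$-length is bounded by the current value, not exactly equal to it. The statement should therefore be proved in the stronger form: if $u$ and $v$ are products of at most $p$ and $q$ elements of $G$, then $u'$ and $v'$ are products of at most $p$ and $q$ elements of $G$. This stronger form implies the lemma by taking minimal-length decompositions.
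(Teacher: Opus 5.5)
Your proof is correct and follows essentially the paper's argument: peel off one $G$-factor, apply Lemma~\ref{lemm:concat_lcm} to split the lcm square, and handle the base case using closure of $G$ under right-lcm and right-divisor. The only difference is organizational. The paper runs a single strong induction on $\wl{u}_G+\wl{v}_G$, whereas you nest two inductions: first the case $\wl{v}_G\le 1$, then general $\wl{v}_G$. Your arrangement never applies the hypothesis to the same pair. The paper's step does exactly that when $\wl{u}_G=1$, unless one first swaps $u$ and $v$.
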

        \begin{proof}
            Let $u,v \in M$ that have a right-lcm $uv'=vu'$. The proof is by induction on $\wl{u}_G + \wl{v}_G$. If $u$ (resp. $v$) is $1$, then $u'=u$ (resp. $v'=v$) and the lemma holds. If $\wl{u}_G = \wl{v}_G = 1$, then $u,v \in G$ and if $uv'=vu'$ is the right-lcm of $u$ and $v$, then it is in $G$ since $G$ is closed under right-lcm, and then $u'$ and $v'$ are in $G$ since $G$ is closed under right-divisor, so $\wl{u'}_G \le 1=$ and $\wl{v'}_G \le 1$. Assume now that $\wl{u}_G + \wl{v}_G \ge 3$ and both $u$ and $v$ are not trivial. Write $u = u_1u_2$ with $u_1\in G$ and $\wl{u_2}_G = \wl{u}_G - 1$. By Lemma \ref{lemm:concat_lcm}, $v$ and $u_1$ have a right-lcm $vu_1' = u_1v_1'$, $v_1'$ and $u_2$ have a right-lcm $v_1'u_2' = u_2v'$ and $u_1'u_2'=u'$. Then, the induction hypothesis applies to $(u_1,v)$ and gives $\wl{u_1'}_G \le \wl{u_1}_G\le 1$ and $\wl{v_1'}_G \le \wl{v}_G$. Then $\wl{v_1'}_G+\wl{u_2}_G\le \wl{v}_G + \wl{u}_G - 1$, and the induction hypothesis applies to $(v_1',u_2)$ and gives $\wl{u_2'}_G \le \wl{u_2}_G$ and $\wl{v'}_G \le \wl{v_1'}_G$. Finally, we have $\wl{u'}_G = \wl{u_1'}_G + \wl{u_2'}_G \le \wl{u_1}_G + \wl{u_2}_G = \wl{u}_G$ and $\wl{v'}_G = \wl{v_1'}_G \le \wl{v}_G$.
        \end{proof}
        \begin{theorem}\label{theo:trichotomy}
            Let $\pres{S}{R}^+$ be a right-complemented presentation of a monoid $M$ for which right-reversing is complete. We assume that $M$ has a finite Garside family $G$. Let $u, v$ be two words of $S^*$. Then one of the following holds:
            \begin{enumerate}
                \item\label{theo:trichotomy1} $(u,v)$ is failing
                \item\label{theo:trichotomy2} $(u,v)$ is eventually periodic
                \item\label{theo:trichotomy3} $\erc{u}{v}$ is defined, i.e. there exist some positive words $u',v'$ on $S$ such that $\fin{u}v \rr^* v'\fin{u'}$.
            \end{enumerate}
        \end{theorem}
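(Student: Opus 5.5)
We argue by contradiction, using a finiteness argument that comes from the Garside family. Assume $(u,v)$ is neither failing nor eventually periodic, and suppose $\erc{u}{v}$ is undefined. Right-reversing of $\fin{u}v$ then never gets stuck on an undefined complement: by the definition of failing, every pattern $\fin{s}t$ that ever appears has $\rc{s}{t}$ defined. Since no positive-negative word is ever reached, the reversing continues forever. By confluence (Lemma \ref{lemm:rr_confluence}), the maximal reversing diagram of $\fin{u}v$ is unique, and it is infinite. I will view it as a grid built from elementary squares, one per reversing step.

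The first goal is to extract an infinite chain of strict reduction factors
\[
(u,v)=(u_0,v_0)\strrf(u_1,v_1)\strrf(u_2,v_2)\strrf\cdots
\]
in which every pair $(u_i,v_i)$ has no finite reversing, i.e.\ $\erc{u_i}{v_i}$ is undefined. To build it, I would induct on lengths as in Lemma \ref{lemm:reduction}. Write $u=a\tilde u$ and $v=b\tilde v$. Reversing $\fin{u}v$ decomposes, as in Lemma \ref{lemm:concat_lcm}, into sub-reversings of smaller pairs: first $(a,b)$, then $(\tilde u,\rc{a}{b})$ and $(\tilde v,\rc{b}{a})$, then the pair formed by the two resulting complements, and so on. If each of these finitely many sub-pairs reversed in finitely many steps, the whole diagram would be finite. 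So at least one of them is a pair $(u_1,v_1)$ with infinite reversing, and it appears as a factor $\fin{u_1}v_1$ of some word obtained from $\fin{u}v$; in other words $(u,v)\rf(u_1,v_1)$. I will choose the decomposition so that the relation is strict, peeling off at least one reversing step, and iterate. Each pair in the chain inherits the two hypotheses: it is not failing, because $\rf$ is transitive, and it is not eventually periodic, for the same reason.

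The key step is to show that only finitely many pairs can occur in such a chain. If so, some pair repeats, $(u_i,v_i)=(u_j,v_j)$ with $i<j$, which gives $(u_i,v_i)\strrf(u_i,v_i)$. Then $(u_i,v_i)$ is periodic and $(u,v)$ is eventually periodic, which is the desired contradiction. A naive length bound fails here: in $\widetilde{A_2}$ the words grow along a non-terminating reversing. This is where the finite Garside family $G$ enters. The preceding lemma bounds the $G$-lengths of complements, $\wl{u'}_G\le\wl{u}_G$ and $\wl{v'}_G\le\wl{v}_G$, but only when the right-lcm exists, and it is exactly lcms that are missing on the infinite branch.

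To get around this, I would choose each $(u_{i+1},v_{i+1})$ as a \emph{minimal} non-terminating sub-pair, as follows. Split $u_i$ and $v_i$ at their first letter, and apply Proposition \ref{prop:complete_compute_lcm} to every finished sub-square: those pairs do have right-lcms, and by completeness their complements are the reversed words, hence elements whose $G$-length is bounded by the preceding lemma. With this choice, every pair in the chain should have one entry that is a single letter, or at least of $G$-length $1$, while the other entry has $G$-length at most $\wl{u}_G+\wl{v}_G$. The words themselves are not unique representatives, so I would normalise via the diagram: the words produced by reversing are determined by the element together with the reversing path. Since $G$ is finite, elements of bounded $G$-length form a finite set. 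Combining the two, only finitely many words can arise as entries of pairs in the chain.

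I expect this finiteness step to be the main obstacle. One difficulty is controlling the $G$-length of the complements that lie along the infinite branch, where no lcm exists. The other is passing from finitely many elements to finitely many words: this needs the fact that, under completeness, the complement words are uniquely determined by the pair of input words, and the fact that strict reduction factors keep one side short, which I must check carefully using the concatenation structure of Lemma \ref{lemm:concat_lcm} transported to reversing diagrams.
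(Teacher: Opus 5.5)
Your overall strategy is right: argue by contradiction, build an infinite $\strrf$-chain of non-terminating pairs by splitting at first letters, and get periodicity from a repetition. For $p=a\tilde p$, $q=b\tilde q$, the three candidate successors of $(p,q)$ are $(\tilde p,\rc{a}{b})$, $(\rc{b}{a},\tilde q)$, and, when these two reverse to $x\fin{y}$ and $z\fin{t}$, the pair $(y,z)$. These are exactly the ones in the paper's Lemma~\ref{lemm:trichotomy_gar}.

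The finiteness step, which you leave open, is a genuine gap, and the invariant you propose does not hold. Start from a pair whose two entries both have $G$-length at least $2$, for instance $(u,v)$ itself. The successor $(y,z)$ can again have two long entries, so the property \emph{one entry of $G$-length $1$} need never be reached. (Once reached it is preserved, but that does not help.) What works is the \emph{sum} of $G$-lengths:
\begin{itemize}
\item $\wl{\tilde p}_G\le\wl{p}_G$, by the $G$-length lemma applied to $a$ and $p$, whose right-lcm is $p$;
\item $\rc{a}{b},\rc{b}{a}\in G$, while $\wl{p}_G,\wl{q}_G\ge1$;
\item applying the lemma to the two \emph{terminated} squares gives $\wl{y}_G\le\wl{\tilde p}_G$ and $\wl{z}_G\le\wl{\tilde q}_G$.
\end{itemize}
So each successor has $G$-length sum at most $\wl{p}_G+\wl{q}_G$. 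The whole chain therefore stays among pairs of elements of $G$-length at most $\wl{u}_G+\wl{v}_G$, which is a finite set because $G$ is finite. The lemma is never applied on the infinite branch, which also answers your worry about missing lcms there.

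Your \emph{normalise via the diagram} step does not settle the passage from elements to words. Reversing outputs depend on the input words, not on the elements they represent. What you need is that each element has only finitely many representing words. This is clear for homogeneous presentations. It also holds whenever $M$ is right-noetherian: infinitely many representatives of $x$ would give, by K\"onig's lemma, an infinite strictly increasing chain of prefixes in $\ldivset(x)$, contradicting Lemma~\ref{lemm:mixed_noetherian} (left-cancellativity comes from completeness). The paper uses this fact tacitly as well, when it treats $G\times G$ as a finite set of word pairs.

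The paper also organises the argument differently. It first reduces to $u,v\in G$ by induction on $\wl{u}_G+\wl{v}_G$, using non-strict factors (Lemma~\ref{lemm:trichotomy_ind}). Inside $G\times G$, closure of $G$ under right-divisors and right-complements keeps all three successors in $G\times G$, so finiteness is automatic and the bound is uniform. Your route, once the sum invariant is in place, does everything in one pass, at the price of a finite set that depends on $(u,v)$.
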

        \begin{remark}
            The first two points of the theorem are not exclusive. The last point corresponds to the case where $u$ and $v$ have a right-lcm (Proposition \ref{prop:complete_lcm}). It is exclusive with the first two because of the confluence of right-reversing. For point \eqref{theo:trichotomy1}, any signed word $w'$ derived from a signed word $w'$ that contains a subword $\fin{s}t$ such that $\rc{s}{t}$ is undefined still contains the same subword, and can never be of the form $u'\fin{v'}$. For point \eqref{theo:trichotomy2}, if $(u,v)$ is periodic, there is an arbitrarily large right-reversing sequence starting from $\fin{u}v$ which is incompatible with a finite right-reversing sequence $\fin{u}v \rr^* v'\fin{u'}$.
        \end{remark}
        Before proving this proposition we start with the following trivial properties:
        \begin{lemma}\label{lemm:trichotomy_rf}
            Let $u,v,u',v' \in S^*$. If $(u',v')$ is failing, and $(u,v) \rf (u',v')$, then $(u,v)$ is failing.\\
            If $(u',v')$ is eventually periodic, and $(u,v) \rf (u',v')$, then $(u,v)$ is eventually periodic.\\
        \end{lemma}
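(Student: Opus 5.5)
The whole argument rests on transitivity of the reduction-factor relation $\rf$. If $(u,v)\rf(u',v')$ and $(u',v')\rf(u'',v'')$, then $(u,v)\rf(u'',v'')$. To see this, write $\fin{u}v \rr^{n} w_1\fin{u'}v'w_2$. Right-reversing is a local rewriting of factors $\fin{s}t$, so any reversing sequence $\fin{u'}v' \rr^{m} w_1'\fin{u''}v''w_2'$ can be carried out inside the larger word. This gives
\[
w_1\fin{u'}v'w_2 \rr^{m} w_1w_1'\fin{u''}v''w_2'w_2 ,
\]
and concatenating the two sequences yields $\fin{u}v \rr^{n+m} (w_1w_1')\fin{u''}v''(w_2'w_2)$, which is exactly $(u,v)\rf(u'',v'')$. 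The same computation shows that if one of the two relations is strict, so is the composite.

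\emph{First statement.} Suppose $(u',v')$ is failing, witnessed by $(u',v')\rf(s,t)$ with $s,t\in S$ and $\rc{s}{t}$ undefined. Transitivity applied to $(u,v)\rf(u',v')$ gives $(u,v)\rf(s,t)$. The letters $s,t$ and the undefinedness of $\rc{s}{t}$ are unchanged, so $(u,v)$ is failing.

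\emph{Second statement.} Suppose $(u',v')$ is eventually periodic, witnessed by a periodic pair $(u'',v'')$ with $(u',v')\rf(u'',v'')$. Transitivity applied to $(u,v)\rf(u',v')$ gives $(u,v)\rf(u'',v'')$. Since $(u'',v'')$ is periodic, $(u,v)$ is eventually periodic.

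The only point needing care is the embedding step: each one-step reversal $\fin{s}t\mapsto\rc{s}{t}\fin{\rc{t}{s}}$ applied to a factor of $\fin{u'}v'$ is still a legitimate one-step reversal when that word sits inside $w_1\fin{u'}v'w_2$. This is immediate from Definition \ref{defi:right_reverse}, because a rewrite step only requires the factor $\fin{s}t$ to occur somewhere in the word, and the surrounding context $w_1,w_2$ never interferes with it.
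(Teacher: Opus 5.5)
Your proof is correct, and it is the argument the paper leaves implicit: the paper calls these ``trivial properties'' and gives no proof. The key step, transitivity of $\rf$, holds because a right-reversing step rewrites a factor $\fin{s}t$ and is therefore unaffected by the surrounding context $w_1,w_2$, exactly as you note.
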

        The proof is then by induction on the $G-$length of the pair $(u,v)$, and reduces to the case where $u$ and $v$ are in $G$.
        \begin{lemma}\label{lemm:trichotomy_ind}
            Let $\pres{S}{R}^+$ be a right-complemented presentation of a monoid $M$ for which right-reversing is complete. Let $G$ be a Garside family of $M$. We assume that for all $u,v \in G$, $u,v$ is either failing, eventually periodic, or $\erc{u}{v}$ is defined.\\
            Then the same holds for all $u,v \in S^*$.
        \end{lemma}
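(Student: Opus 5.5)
The plan is to use the grid structure of right-reversing. Cut the two words into pieces lying in $G$, and treat the reversing of $\fin{u}v$ as a finite grid of elementary reversings of pairs of $G$-elements. The hypothesis is then applied cell by cell.

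\emph{Step 1: cutting the words.} First observe that $S\subseteq G$. Indeed, $G$ generates $M$, the presentation has no $\varepsilon$-relation, and $G$ is closed under right-divisors, so every letter (an atom) lies in $G$. Hence any word $u\in S^*$ can be written as a concatenation $u\weq u_1\cdots u_k$ of words each representing an element of $G$; the letter-by-letter decomposition already works. Similarly write $v\weq v_1\cdots v_l$. I read the hypothesis of the lemma as applying to any words representing elements of $G$. If it is only meant for chosen representatives, I would have to check separately that failing, eventual periodicity and definedness of $\ercb$ are insensitive to the representative. That is the delicate point I would need to settle first, possibly by working letter by letter so that only genuine words are involved.

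\emph{Step 2: the grid induction.} I would prove, by induction on $k+l$, the following statement. Let $u\weq u_1\cdots u_k$ and $v\weq v_1\cdots v_l$ with every $[u_i],[v_j]\in G$. Then either $(u,v)$ is failing, or it is eventually periodic, or $\erc{u}{v}$ and $\erc{v}{u}$ are defined and admit decompositions into at most $l$ (resp.\ $k$) pieces lying in $G$.

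The case $k=1$ or $l=1$ is treated by a nested induction on the other index; the case $k=l=1$ is exactly the hypothesis. For the inductive step, write $\fin{u}v\weq \fin{u_2\cdots u_k}\,\fin{u_1}v_1\,v_2\cdots v_l$ and first reverse the factor $\fin{u_1}v_1$.
\begin{itemize}
\item If $(u_1,v_1)$ is failing or eventually periodic, then $(u,v)\rf(u_1,v_1)$ trivially, with $n=0$ and the factor sitting inside $\fin{u}v$. Lemma~\ref{lemm:trichotomy_rf} then transfers the property to $(u,v)$.
\item Otherwise $\fin{u_1}v_1\rr^* v_1'\fin{u_1'}$. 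By Proposition~\ref{prop:complete_compute_lcm}, $u_1v_1'\meq v_1u_1'$ is the right-lcm, and since $G$ is closed under right-lcm and right-complement, $[u_1'],[v_1']\in G$.
\end{itemize}
In the second case, $\fin{u}v$ reverses to $\fin{u_2\cdots u_k}\,v_1'\,\fin{u_1'}\,v_2\cdots v_l$, which contains the factors $\fin{u_1'}v_2\cdots v_l$ (a pair with $1+(l-1)$ pieces) and $\fin{u_2\cdots u_k}v_1'$ (a pair with $(k-1)+1$ pieces). Both have fewer pieces than $k+l$ when $k,l\ge 2$, and the degenerate cases are handled by the nested induction. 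If either pair is failing or eventually periodic, $(u,v)$ reduces to it and we conclude by Lemma~\ref{lemm:trichotomy_rf}. Otherwise both are defined, with complements again cut into $G$-pieces. We then continue with the remaining block, which is the pair formed by the complement of $u_2\cdots u_k$ in $v_1'$ and the complement of $v_2\cdots v_l$ in $u_1'$. It has at most $(k-1)+(l-1)$ pieces by the piece count in the induction statement, so the induction applies. Assembling the three sub-reversings gives $\fin{u}v\rr^* v'\fin{u'}$, by confluence (Lemma~\ref{lemm:rr_confluence}), with the required piece counts.

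\emph{Main obstacle.} The combinatorics is just the usual tiling of a reversing diagram, as in Lemma~\ref{lemm:concat_lcm}. The genuine difficulty is the bookkeeping in the piece count: I must make sure the outputs of each cell are words (not just elements) decomposable into $G$-pieces, without the number of pieces growing. This works because each cell outputs exactly one piece on each side, so a $k\times l$ grid closes after $kl$ cells. It also relies on reduction factors being transitive, which is immediate since a reversing of a factor is a reversing of the whole word.
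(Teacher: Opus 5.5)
Your argument is correct, apart from the claim $S\subseteq G$ discussed in the second paragraph. The underlying idea, tiling the reversing diagram by $G$-cells and transferring failure or eventual periodicity along $\rf$, is the paper's, but the bookkeeping is different.

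\textbf{Comparison with the paper.} The paper inducts on $\wl{u}_G+\wl{v}_G$ and cuts only the longer word, $v=v_1v_2$ with $v_1\in G$. It makes two recursive calls, on $(u,v_1)$ and on $(u',v_2)$, and gets the decrease of the measure from the separate lemma $\wl{u'}_G\le\wl{u}_G$. You instead fix word-level factorizations into $G$-pieces and induct on the number $k+l$ of pieces. You settle the corner cell by the hypothesis, and the two strips and the remaining block by induction. The bound on the number of output pieces is carried inside the induction statement, so no $G$-length lemma is needed.

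The paper's version is shorter, but yours buys two things:
\begin{itemize}
\item It stays at the level of words, which is where $\rf$, failing and eventual periodicity are defined. The paper's cut with $\wl{v_2}_G=\wl{v}_G-1$ need not be realizable by a prefix of the given word. In $\pres{a,b}{aba=bab}^+$ with $G$ the divisors of $aba$, one has $\wl{aabaa}_G=2$. Yet the only nonempty prefix of $aabaa$ representing an element of $G$ is $a$, and $abaa$ still has $G$-length $2$. Letter-by-letter cutting with your piece count avoids this.
\item It uses only Proposition~\ref{prop:complete_compute_lcm} and closure of $G$ under right-lcm and right-complement. The $G$-length lemma and Lemma~\ref{lemm:concat_lcm} are stated for gcd-monoids, which is not assumed here.
\end{itemize}
Your reading of the hypothesis (any word representing an element of $G$) is exactly how the paper uses it, so no separate check about representatives is needed.

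\textbf{The claim $S\subseteq G$.} Your justification in Step~1 is wrong: in a right-complemented presentation, letters need not be atoms. For example, $\pres{a,b,c}{ab=c}^+$ is right-complemented with $\rc{a}{c}=b$ and $\rc{c}{a}=\varepsilon$. It satisfies the reduction rule and presents the free monoid on $a,b$, in which $\{a,b\}$ is a Garside family not containing $c$. The one-letter word $c$ cannot be cut into $G$-pieces, so your Step~2 never reaches pairs involving it. You should therefore state $S\subseteq G$ as an assumption. It holds in the cases the paper cares about: homogeneous presentations such as the Artin--Tits ones, where letters are atoms, and families built from $G_0=S$. The paper's proof tacitly needs the same word-level cutting.

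\textbf{Two small simplifications.} Take $k=0$ or $l=0$ (one word empty) as trivial base cases; then your general step already covers $k=1$ or $l=1$, and the nested induction is unnecessary. Also, assembling the sub-reversings does not use confluence, since reversing a factor of a signed word is a reversing of the whole word.
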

        \begin{proof}
            By induction on $l=\wl{u}_G + \wl{v}_G$. If $l=0,1$, then $u$ or $v$ is empty and $\erc{u}{v}$ is defined.
            Same if $l=2$, and $u$ or $v$ is empty. If $\wl{u}_G=\wl{v}_G=1$, then $u$ and $v$ are in $G$, and this is the assumption.\\
            Assume $l \ge 3$. Without loss of generality, we can assume that $\wl{v}_G \ge 2$. Then, we can write $v = v_1v_2$ with $v_1 \in G$ and $\wl{v_2}_G = \wl{v}_G - 1$, and the induction hypothesis applies to $(u,v_1)$. Note that we have $(u,v) \rf (u,v_1)$. Then, by Lemma \ref{lemm:trichotomy_rf}, if $(u,v_1)$ is failing (resp. eventually periodic), then $(u,v)$ is failing (resp. eventually periodic), and the lemma holds. If $(u,v_1)$ is neither failing nor eventually periodic, then $\erc{u}{v_1}$ is defined. Then there exist some $u',v_1'$ such that $\fin{u}v_1 \rr^* v_1'\fin{u'}$. Note now that we have $(u,v) \rf (u',v_2)$. Moreover, the Garside family properties imply that $\wl{u'}_G \le \wl{u}_G$. Thus, the induction hypothesis applies to $(u',v_2)$, and as before, if $(u',v_2)$ is failing (resp. eventually periodic), then $(u,v)$ is failing (resp. eventually periodic). If $(u',v_2)$ is neither failing nor eventually periodic, then $\erc{u'}{v_2}$ is defined, so  some $u'',v_2'$ such that $\fin{u'}v_2 \rr^* v_2'\fin{u''}$. Then, we have $\fin{u}v \rr^* v_1'v_2'\fin{u''}$. So we have $\erc{u}{v}$ is defined, and the lemma holds.
        \end{proof}
        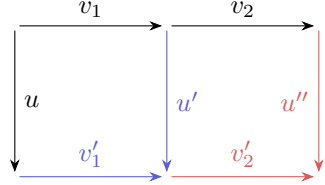
\begin{figure}[h!]
            \begin{center}
                \begin{tikzpicture}\label{figimported:trichotomy_ind}
                    \node[scale = 0.5] (P0) at (1, -1) {};
                    \node[scale = 0.5] (P1) at (3, -1) {};
                    \node[scale = 0.5] (P2) at (5, -1) {};
                    \node[scale = 0.5] (P3) at (1, -3) {};
                    \node[scale = 0.5] (P4) at (3, -3) {};
                    \node[scale = 0.5] (P5) at (5, -3) {};
                    \draw[-Stealth] (P0) edge [] node[auto]{${v_1}$} (P1);
                    \draw[-Stealth] (P0) edge [] node[auto]{$u$} (P3);
                    \draw[-Stealth] (P1) edge [] node[auto]{${v_2}$} (P2);
                    \draw[-Stealth] (P1) edge [color={rgb,255:red,92;green,92;blue,214}] node[auto]{${u'}$} (P4);
                    \draw[-Stealth] (P2) edge [swap, color={rgb,255:red,214;green,92;blue,92}] node[auto]{${u''}$} (P5);
                    \draw[-Stealth] (P3) edge [color={rgb,255:red,92;green,92;blue,214}] node[auto]{${v_1'}$} (P4);
                    \draw[-Stealth] (P4) edge [color={rgb,255:red,214;green,92;blue,92}] node[auto]{${v_2'}$} (P5);
                \end{tikzpicture}
            \end{center}
            \caption{Illustration of the proof of Lemma \ref{lemm:trichotomy_ind}.}
            \label{fig:trichotomy_ind}
        \end{figure} 
        Now the proof of Theorem \ref{theo:trichotomy} reduces to the case where $u$ and $v$ are in $G$.\\
        \begin{lemma}\label{lemm:trichotomy_gar}
            Let $\pres{S}{R}^+$ be a right-complemented presentation of a monoid $M$ for which right-reversing is complete. Assume $M$ has a finite Garside family $G$. Then for all $u,v \in G$, $(u,v)$ is either failing, eventually periodic, or $\erc{u}{v}$ is defined.
        \end{lemma}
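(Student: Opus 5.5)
The plan is to argue by contradiction with a compactness (König-type) argument. Take $u,v$ representing elements of $G$ and assume that $(u,v)$ is neither failing nor eventually periodic, and that $\erc{u}{v}$ is not defined. The goal is to extract from the (necessarily infinite, by Lemma \ref{lemm:rr_confluence}) right-reversing of $\fin{u}v$ a pair that occurs twice along a chain of strict reduction factors. Such a repetition would make some pair periodic, and by Lemma \ref{lemm:trichotomy_rf} that would make $(u,v)$ eventually periodic, a contradiction.

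First, I organise right-reversing of $\fin{x}y$ recursively, exactly as in the proof of Lemma \ref{lemm:trichotomy_ind}, but splitting off letters instead of $G$-elements. Write $\mathrm{Rev}(x,y)$ for the task "reverse $\fin{x}y$":
\begin{itemize}
\item if $x$ or $y$ is empty, or both are letters, the task is an immediate leaf: it succeeds, or it fails when $\rc{s}{t}$ is undefined;
\item otherwise, with $y\weq t y_2$ say, the task first calls $\mathrm{Rev}(x,t)$;
\item if that call returns $x'$, it then calls $\mathrm{Rev}(x',y_2)$;
\item symmetrically, a long $x$ is split off its first letter.
\end{itemize}
Each call has at most two children. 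By confluence (Lemma \ref{lemm:rr_confluence}), the whole reversing of $\fin{u}v$ terminates if and only if this recursion tree is finite. Every child is a reduction factor of its parent ($\rf$), and the relation is strict as soon as at least one reversing step has been performed in between.

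Second, since $\erc{u}{v}$ is undefined and nothing fails, the tree is infinite and finitely branching. König's lemma then gives an infinite descending chain of calls
\[(u,v)=(x_0,y_0)\rf(x_1,y_1)\rf(x_2,y_2)\rf\cdots\]
in which infinitely many of the relations are strict. I will also arrange, by always splitting into one letter and a rest, that each $y_i$ is a letter or a factor of some $\rc{s}{t}$ (symmetrically for $x_i$), so one component of each pair ranges over a finite set. For the other component, every completed sub-call is a genuine right-lcm computation by Proposition \ref{prop:complete_compute_lcm}. The Garside lemma above, applied starting from $u,v\in G$, then shows that each $x_i$ (resp. $y_i$) represents an element of bounded $G$-length, in fact a factor of a right-complement inside $G$ or of an element of $G$.

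Third, I will show that these pairs of words range over a finite set. Then some pair $(x_i,y_i)$ equals a later $(x_j,y_j)$ with a strict step in between, so $(x_i,y_i)\strrf(x_i,y_i)$ is periodic and $(u,v)\rf(x_i,y_i)$, which concludes.

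This finiteness is the main obstacle. Bounding the $G$-length only bounds the \emph{elements}, and without homogeneity a single element of the finite set $G$ may a priori be represented by unboundedly long words. The first thing I would try is to exploit the fact that every word occurring in the chain is not arbitrary: it is produced syntactically, as a concatenation of the finitely many words $\rc{s}{t}$, by completed sub-reversings whose inputs are themselves earlier words of the chain. By uniqueness of $\ercb$ on words, each such output word is a function of its input words. So I would set up an induction on the depth of completed sub-calls, showing that the set of words obtained as $\ercb$ of pairs of words representing elements of $G$ closes up into a finite set. I would use right-noetherianity, via a sharp witness, to prevent an unbounded growth of representatives of the same element. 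If this fails in general, the fallback is to replace syntactic equality of pairs by $\meq$-equality of the represented elements. In that case I would have to check, by completeness and confluence, that a repeated pair of elements still forces a repetition of the signed subword pattern up to $\meq$, which is enough to iterate the reversing forever and conclude eventual periodicity.
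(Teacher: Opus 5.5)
\paragraph{Verdict: the proposal has a genuine gap.} Your skeleton parallels the paper's. Your K\"onig chain plays the role of the paper's local step, which shows directly that every pair of $G\times G$ that is neither failing nor has $\ercb$ defined has a strict reduction factor of the same kind. That step splits $u=u_1u_2$, $v=v_1v_2$ with $u_1,v_1\in S$, reverses $\fin{u_1}v_1$ once, and examines $(u_2,\rc{u_1}{v_1})$, $(\rc{v_1}{u_1},v_2)$, then $(u_2',v_2')$. Both arguments then pigeonhole. The gap is that you never establish the finiteness the pigeonhole needs. You call it ``the main obstacle'' and list two strategies without carrying out either, so as written the proposal does not prove the lemma. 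Your partial finiteness claim also only holds for the $y$-components. These are letters, suffixes of $v$, or suffixes of some $\rc{s}{t}$, and only if a long $y$ is always split first. The ``symmetric'' claim for the $x_i$ is false: they are outputs of completed sub-reversings, and that is exactly where the difficulty lies.

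\paragraph{How the paper gets finiteness, and what you are missing.} The paper gets finiteness simply by working in the finite set $G\times G$. Closure of $G$ under right-divisor and right-complement keeps every pair it produces inside $G\times G$. You already have this fact (each $x_i,y_i$ represents an element of $G$). So at the paper's level of precision, which identifies a pair of words with the pair of elements it represents, you could conclude immediately. If instead you insist, rightly, that periodicity means a literal repetition of words, the missing idea is that each element has only finitely many representing words. This holds under right-noetherianity, which the paper assumes from Proposition~\ref{prop:comput_lcm} on and which is the usual source of completeness via Proposition~\ref{prop:cube_complete}. The argument runs as follows.
\begin{enumerate}
\item Completeness gives left-cancellativity (Proposition~\ref{prop:comp_redrule} and Lemma~\ref{lemm:reduction}).
\item The absence of $\varepsilon$-relations makes every letter non-invertible.
\item If infinitely many words represented some $g$, K\"onig's lemma on the finitely branching tree of their prefixes would give an infinite $\strldiv$-increasing chain in $\ldivset(g)$. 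This contradicts Lemma~\ref{lemm:mixed_noetherian}.
\end{enumerate}
Hence only finitely many words represent elements of $G$, and your chain (or the paper's) must repeat a pair of words. Your first route names the right hypothesis but not this mechanism: a sharp witness by itself bounds no word length. Your fallback cannot work either, because periodicity is syntactic. Reversing $\fin{\hat x}\hat y$ with $\hat x\meq x$, $\hat y\meq y$ is a different computation, so a repetition of represented elements does not yield $(x,y)\strrf(x,y)$.
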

        \begin{proof}
            The key is that an infinite sequence of reductions in the finite set $G\times G$ are necessarily eventually periodic. We denote by $F$ the set of failing pairs of $G\times G$, by $T$ the set of pairs $(u,v)$ such that $\erc{u}{v}$ is defined, and by $E$ the rest of the pairs, i.e. $E = G\times G \setminus (F \cup T)$. Then, it is enough to prove that for every pair $(u,v) \in E$, there exists a pair $(u',v') \in E$ such that $(u,v) \strrf (u',v')$.\\
            Let $(u,v) \in E$. Then $u$ and $v$ are necessarily not empty (otherwise, $(u,v) \in T )$. Then we can write $u = u_1u_2$ and $v = v_1v_2$ with $u_1,v_1 \in S$ and $u_2,v_2 \in G$ because $G$ is closed under right-divisor. Moreover, $\rc{u_1}{v_1}$ is defined, or otherwise $(u,v) \in F$. Then, $\fin{u}v \rr^1 \fin{u_1}v_1'{v_1}\fin{u_1'}v_2$. Then, we have $(u,v) \strrf (u_2,v_1')$ and $(u,v) \strrf (u_1',v_2)$. Now, both pairs $(u_2,v_1')$ and $(u_1',v_2)$ can't be in $F$ because of Lemma \ref{lemm:trichotomy_rf}. If one of them is in $E$, we are done. Otherwise, assume that both are in $T$. Then, there exist some $u_2',v_1'' \in G$ such that $\fin{u_2}v_1' \rr^* v_1''\fin{u_2'}$. And there exist some $u_1'',v_2' \in G$ such that $\fin{u_1'}v_2 \rr^* v_2'\fin{u_1''}$. So we have $\fin{u}v \rr^* v_1''\fin{u_2'}v_2'\fin{u_1''}$. So we have $(u,v) \strrf (u_2',v_2')$. Now, $(u_2',v_2')$ cannot be in $F$ for the same reason, and if it is in $T$, then there exist some $u_2'',v_2''$ such that $\fin{u_2'}v_2' \rr^* v_2''\fin{u_2''}$. Then, we have $\fin{u}v \rr^* v_1''v_2''\fin{u_2''}\fin{u_1''}$, so $(u,v) \in T$ which contradicts the assumption. So we have $(u_2',v_2') \in E$, and this concludes the proof of Proposition \ref{theo:trichotomy}.
        \end{proof}
        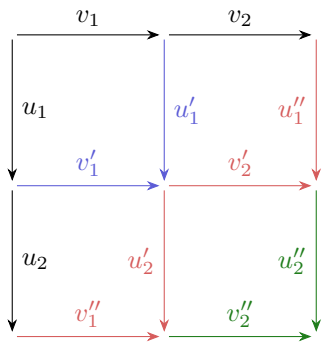
\begin{figure}[h!]
            \begin{center}
                \begin{tikzpicture}\label{figimported:trichotomy_gar}
                    \node[scale = 0.5] (P0) at (1, -1) {};
                    \node[scale = 0.5] (P1) at (3, -1) {};
                    \node[scale = 0.5] (P2) at (5, -1) {};
                    \node[scale = 0.5] (P3) at (1, -3) {};
                    \node[scale = 0.5] (P4) at (3, -3) {};
                    \node[scale = 0.5] (P5) at (5, -3) {};
                    \node[scale = 0.5] (P6) at (1, -5) {};
                    \node[scale = 0.5] (P7) at (3, -5) {};
                    \node[scale = 0.5] (P8) at (5, -5) {};
                    \draw[-Stealth] (P0) edge [] node[auto]{${v_1}$} (P1);
                    \draw[-Stealth] (P0) edge [] node[auto]{${u_1}$} (P3);
                    \draw[-Stealth] (P1) edge [] node[auto]{${v_2}$} (P2);
                    \draw[-Stealth] (P1) edge [color={rgb,255:red,92;green,92;blue,214}] node[auto]{${u_1'}$} (P4);
                    \draw[-Stealth] (P2) edge [swap, color={rgb,255:red,214;green,92;blue,92}] node[auto]{${u_1''}$} (P5);
                    \draw[-Stealth] (P3) edge [color={rgb,255:red,92;green,92;blue,214}] node[auto]{${v_1'}$} (P4);
                    \draw[-Stealth] (P3) edge [] node[auto]{${u_2}$} (P6);
                    \draw[-Stealth] (P4) edge [color={rgb,255:red,214;green,92;blue,92}] node[auto]{${v_2'}$} (P5);
                    \draw[-Stealth] (P4) edge [swap, color={rgb,255:red,214;green,92;blue,92}] node[auto]{${u_2'}$} (P7);
                    \draw[-Stealth] (P5) edge [swap, color={rgb,255:red,32;green,125;blue,28}] node[auto]{${u_2''}$} (P8);
                    \draw[-Stealth] (P6) edge [color={rgb,255:red,214;green,92;blue,92}] node[auto]{${v_1''}$} (P7);
                    \draw[-Stealth] (P7) edge [color={rgb,255:red,32;green,125;blue,28}] node[auto]{${v_2''}$} (P8);
                \end{tikzpicture}
            \end{center}
            \caption{Illustration of the proof of Lemma \ref{lemm:trichotomy_gar}.}
        \end{figure}
    \subsection{Consequences}
        The previous results along with the fact that eventual periodicity and failing can be detected algorithmically, gives the following result.
        \begin{proposition}\label{prop:comput_lcm}
            Let $\pres{S}{R}^+$ be a right-complemented presentation of a monoid $M$ for which right-reversing is complete, $M$ is noetherian and has a finite Garside family. Then there exists an algorithm that computes (a word on $S$ that represents) the right-lcm of two words $u,v \in S^*$, or detects if they have no common right-multiple.
        \end{proposition}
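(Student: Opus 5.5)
The algorithm runs right-reversing on the signed word $\fin{u}v$ while searching, in parallel, for a certificate of failure or of eventual periodicity. Theorem \ref{theo:trichotomy} guarantees that one of the three outcomes always occurs, so the search terminates. Concretely, I would fix a deterministic reversing strategy, say always reversing the leftmost factor $\fin{s}t$ for which $\rc{s}{t}$ is defined. By Lemma \ref{lemm:rr_confluence}, this strategy reaches a positive-negative word $v'\fin{u'}$ whenever any reversing sequence does. At each stage $n$ I would enumerate the finitely many signed words reachable in at most $n$ steps. Since $S$ and $R$ are finite and $\rcb$ is a computable finite table, each stage is a finite computation. Among these words I would look for three things: a word of the form $v'\fin{u'}$ (success), a factor $\fin{s}t$ with $\rc{s}{t}$ undefined (failure), and a factor $\fin{u_1}v_1$.

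For each such factor $\fin{u_1}v_1$, I would start a dovetailed search for a reversing sequence $\fin{u_1}v_1\rr^{m} w_1\fin{u_1}v_1w_2$ with $m>0$. Because reducibility is a relation on words, each such search is recursively enumerable. Dovetailing over all $n$ and all candidate pairs makes the whole procedure semi-decide eventual periodicity of $(u,v)$, exactly as sketched in the paragraph preceding Lemma \ref{lemm:trichotomy_gar}. Failure is also semi-decided in this way, and success is semi-decided by running the reversing itself. Theorem \ref{theo:trichotomy} says that at least one of the three searches halts, so the combined procedure halts on every input.

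It remains to check that the output is correct. If success occurs, then Proposition \ref{prop:complete_compute_lcm} (using completeness) says that $uv'$ represents the right-lcm, and we output it. If failure or eventual periodicity is detected, the remark following Theorem \ref{theo:trichotomy} shows that no reversing of $\fin{u}v$ can terminate with a positive-negative word. By Proposition \ref{prop:complete_compute_lcm}, $u$ and $v$ then have no right-lcm. Noetherianity and Lemma \ref{lemm:reduction} \eqref{reduc:conlcm} (conditional right-lcms, via Proposition \ref{prop:comp_redrule}) then imply that $u$ and $v$ have no common right-multiple at all, and the algorithm reports this.

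The main subtle point is making sure that the detection of eventual periodicity is genuinely effective without knowing $G$. The procedure above never uses $G$: the Garside family enters only through Theorem \ref{theo:trichotomy}, as a guarantee of termination. I would state this explicitly. This is exactly what distinguishes the result from the non-constructive bound argument mentioned in the introduction.
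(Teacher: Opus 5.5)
Your proposal is correct and is essentially the paper's argument. The paper obtains the proposition directly from Theorem \ref{theo:trichotomy}, combined with the semi-decidability of failure and eventual periodicity (the parallel search over factors $\fin{u_1}v_1$ described before Lemma \ref{lemm:trichotomy_gar}); you make the dovetailing explicit, and check correctness via confluence and Proposition \ref{prop:complete_compute_lcm}. One simplification: your last step needs neither noetherianity nor conditional right-lcms, since by completeness any common right-multiple of $u$ and $v$ already forces $\fin{u}v \rr^* v'\fin{u'}$.
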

        Moreover, knowing that a finite Garside family exists, the previous algorithm can be used to compute a minimal Garside family, based on algorithm 2.23 in \cite{dehornoy_algorithms_2014}.
        \begin{proposition}\label{prop:compute_garside}
            Let $\pres{S}{R}^+$ be a right-complemented presentation of a monoid $M$ for which right-reversing is complete, $M$ is noetherian. Starting with $G_0 = S$, we can iterate the process of computing the right-lcm : $G_{n+1} = G_n \cup \{\lcm(g,h) | g,h \in G_n\}$. We stop if $G_{n+1} = G_n$ for some $n$. Then the process ends if and only if $M$ has a finite Garside family, and in this case, $G_n$ is a minimal Garside family of $M$.
        \end{proposition}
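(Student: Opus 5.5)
The proof splits into two directions, and in both I will use the algorithm of Proposition \ref{prop:comput_lcm} as a black box. That algorithm computes $g \lcm h$ as a word, or certifies that no common right-multiple exists. It terminates because of Theorem \ref{theo:trichotomy}, but only under the hypothesis that a finite Garside family exists. When no such family exists, I read each step as a run of right-reversing, possibly non-terminating, and "the process ends" means every step terminates and eventually $G_{n+1} = G_n$. I should also make sure each $G_n$ is closed under right-divisors, not only under lcms. Concretely, I will either add the complements $\erc{g}{h}$ along with the lcms, or argue that closure under right-divisors follows from noetherianity and closure of generators. The first option is safer: in a monoid with complete right-reversing, the right-divisors of an lcm $g \lcm h$ with $g,h$ in a family closed under right-complement are controlled by the complements.

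For the direction "ends $\Rightarrow$ finite Garside family", suppose $G_{n+1} = G_n$. Then $G_n$ is finite, since each step adds finitely many elements. It contains $S$, so it generates $M$, and it is closed under right-lcm (and right-complement, with the modification above). By the characterization recalled before the lemma on $\wl{\cdot}_G$ (Proposition IV.2.40 of \cite{dehornoy_foundations_2015}), $G_n$ is therefore a finite Garside family.

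For the converse, suppose $M$ has a finite Garside family $G$. First, every Garside family contains $S$ up to equality in $M$. This holds because $M$ is noetherian with no invertibles, so the atoms must lie in any generating family, and in a right-complemented presentation the letters of $S$ are atoms; by homogeneity-like noetherianity, a letter cannot be a product of two non-trivial elements. Next, closure of $G$ under right-lcm and right-complement gives $G_n \subseteq G$ for all $n$ by induction. Hence the increasing chain $G_0 \subseteq G_1 \subseteq \cdots$ is bounded by the finite set $G$ and must stabilize. Each step terminates, because with $G$ finite the algorithm of Proposition \ref{prop:comput_lcm} always halts. When some pair has no common multiple, the process simply adds nothing, consistent with lcms being partial. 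Minimality follows from the same induction: any Garside family $G'$ contains $G_n$, and $G_n$ is itself Garside, so it is the smallest one.

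I expect the main obstacle to be the containment $S \subseteq G'$ for an arbitrary Garside family $G'$, together with the closure of $G_n$ under right-divisors. The first step I will try is to use noetherianity to show that letters are atoms, using right-reversing completeness and the absence of $\varepsilon$-relations. For the divisor closure, I will use Lemma \ref{lemm:concat_lcm} to see that right-divisors of lcms of elements of $G_n$ are complements, which the iteration then also collects.
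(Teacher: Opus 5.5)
Your route is the one the paper intends. The paper writes no proof beyond pointing to Algorithm 2.23 of \cite{dehornoy_algorithms_2014}, with Proposition \ref{prop:comput_lcm} as the lcm oracle. Adding the complements is in fact necessary, not just safer: with lcms only, in type $\widetilde{A_2}$ the iteration stops at $\{a,b,c,aba,bcb,aca\}$, which misses the right-divisor $ba$ of $aba$. Your direction ``the process ends $\Rightarrow$ $G_n$ is a finite Garside family'' is fine. Divisor closure also needs no separate argument: if $x=s_1\cdots s_k y$ with $x\in G_n$, then $y=s_k\backslash(\cdots(s_1\backslash x))$ is an iterated complement.

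The gap is your claim that the letters of $S$ are atoms. Both your termination argument ($G_n\subseteq G$) and your minimality argument rest on it, and it is false in general. The definition allows $\rc{s}{t}=\varepsilon$, and noetherianity does not stop a letter from factoring. Take $\pres{a,b,c}{a=bc}^+$. It is right-complemented with $\rc{a}{b}=\varepsilon$, satisfies the reduction rule (so reversing is complete), and is noetherian. It presents the free monoid on $b,c$, where $\{1,b,c\}$ is a finite Garside family not containing $a$. The iteration returns $\{1,a,b,c\}$, which is not minimal under inclusion. So the conclusion can only mean ``the smallest Garside family containing $S$'', and that version is easy: by the characterization you quote, every Garside family containing $S$ is closed under right-lcm and right-complement, hence contains every $G_n$. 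Your stronger version needs an extra hypothesis such as $\rc{s}{t}\neq\varepsilon$ for all $s\neq t$, which holds for Artin--Tits presentations. Then no relation applies to a one-letter word, so letters are atoms and lie in every generating family.

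Termination, by contrast, holds without $S\subseteq G$, using the lemma on $\wl{\cdot}_G$. Put $N=\max_{s\in S}\wl{s}_G$ and let $G^{(N)}$ be the finite set of elements of $G$-length at most $N$. The lemma shows $G^{(N)}$ is closed under right-complement. It is also closed under right-lcm, because $\wl{u\lcm v}_G\le\max(\wl{u}_G,\wl{v}_G)$, proved by induction:
\begin{itemize}
\item write $u=g_1u_1$ and $v=h_1v_1$ with $g_1,h_1\in G$, $\wl{u_1}_G=\wl{u}_G-1$ and $\wl{v_1}_G=\wl{v}_G-1$, and set $g=g_1\lcm h_1\in G$;
\item then $u\lcm v=g\bigl((g\backslash u)\lcm(g\backslash v)\bigr)$;
\item since $g\backslash u=(g_1\backslash h_1)\backslash u_1$, the lemma gives $\wl{g\backslash u}_G\le\wl{u_1}_G$, and symmetrically $\wl{g\backslash v}_G\le\wl{v_1}_G$; now induct.
\end{itemize}
Hence every $G_n$ lies in $G^{(N)}$, and the chain stabilizes.
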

        Computing the Garside family with this algorithm can be very long, and our implementation (\url{https://github.com/domyrmininon/compute_gf}) that improves a little bit the efficiency of the right-reversing by remembering previous computations (at the cost of memory consumption) allows to compute Garside families of size up to around 2000 in less than an hour. We recovered the results of \cite{dehornoy_garside_2015} for the size of the smallest Garside family, denoted $F$, for several affine type Artin-Tits groups. We checked for small cases that our algorithm gives $|F| = |W|$ for spherical types. We complete Table 1 in \cite{dehornoy_garside_2015} with type $\widetilde{G_2}$, and $\widetilde{D_4}$.
            
        \begin{table}[htb]\centering
        \begin{tabular}{|c|c|c|c|c|c|c|c|c|c|}
        \VR(4,2) type of $(W, S)$\null
        & spherical \null
        &$\widetilde{\mathrm{A}}_2\null$
        &$\widetilde{\mathrm{A}}_3\null$
        &$\widetilde{\mathrm{A}}_4\null$
        &$\widetilde{\mathrm{B}}_3\null$
        &$\widetilde{\mathrm{C}}_2\null$
        &$\widetilde{\mathrm{C}}_3\null$
        &$\widetilde{\mathrm{G}}_2\null$
        &$\widetilde{\mathrm{D}}_4\null$\\
        \hline
        \VR(4,2)$|E|$&$1$&$3$&$10$&$35$&$14$&$3$&$12$&$3$&$49$\\
        \hline
        \VR(4,2)$|F|$&$|W|$&$16$&$125$&$1{,}296$&$315$&$24$&$317$&$41$&$2400$
        \end{tabular}
        \vspace{2mm}
        \caption[]{Sizes of the smallest Garside family $F$ in the Artin-Tits monoid associated with the Coxeter system $(W, S)$. By definition, when $F$ is finite, it must be the set of all right-divisors of a finite set of elements (the \enquote{extremal elements}) that we denote by $E$.}
        \label{T:Size}
        \end{table}
        In spherical type, the extremal element is the Garside element $\Delta$ which can easily be obtained by closing the set of generators under only lcm. In general, to compute the set of extremal elements, one needs to take right-complement at some point. Since the Garside family is fully described by this set of extremal elements, it might be an intersting object to investigate. Is there a way to compute it directly ? Have these extremal element particular agebraic property ? We conclude this paper with illustration of the 3 smallest Garside families for the affine type Artin-Tits monoids of rank 3: $\widetilde{\mathrm{A}}_2\null$, $\widetilde{\mathrm{C}}_2\null$, $\widetilde{\mathrm{G}}_2\null$.

        \begin{figure}[ht]
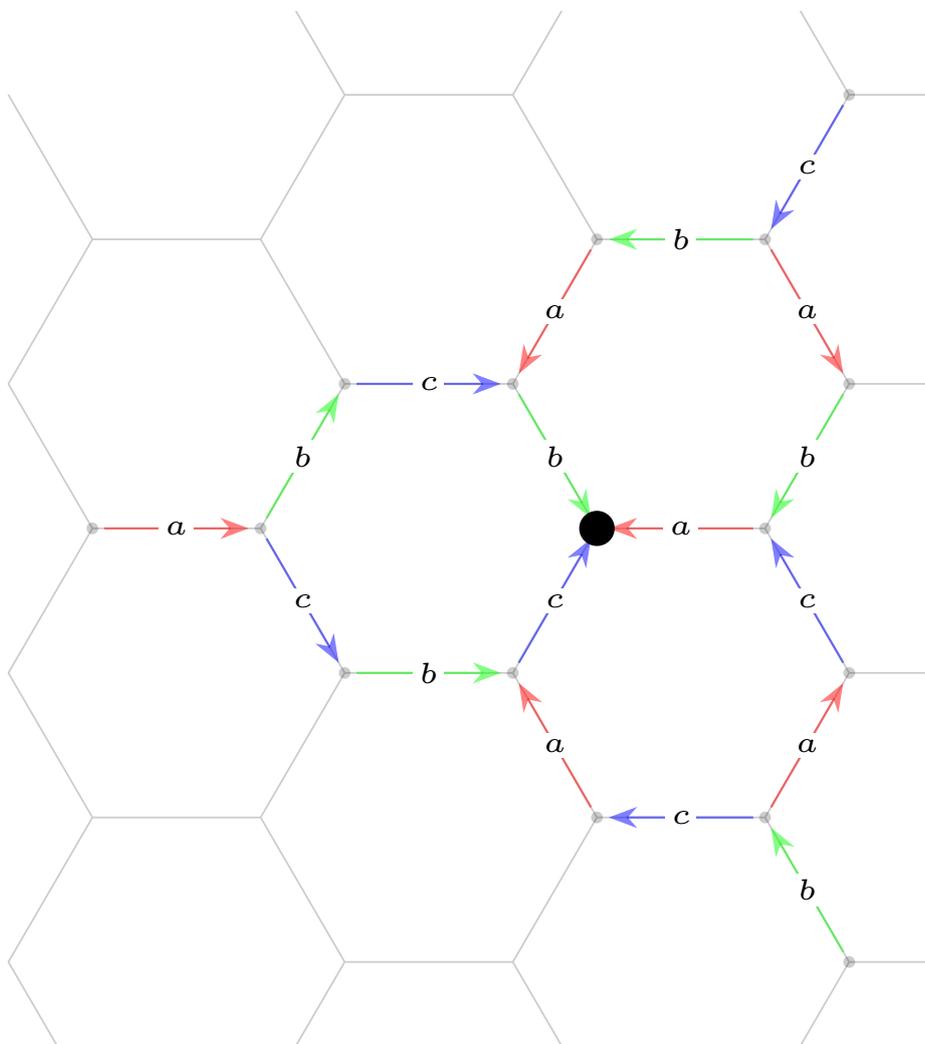

            \resizebox{\linewidth}{!}{

            }
            \caption{The smallest Garside family for the affine type Artin-Tits monoid of type $\widetilde{\mathrm{A}}_2$. Elements of the family are the paths ending at the origin. The diagram fits on the hexagonal tiling of the plane, which is the Cayley graph of the corresponding Coxeter group. Starting from the extremities, one can read the $3$ extremal elements : $abcb$, $bcac$ and $caba$.} 
        \end{figure}

        \begin{figure}[ht]
            \resizebox{\linewidth}{!}{
                %
            }
            \caption{The smallest Garside family for the affine type Artin-Tits monoid of type $\widetilde{\mathrm{C}}_2$. The diagram fits on a truncated square tiling of the plane, which is the Cayley graph of the corresponding Coxeter group. The $3$ extremal elements are : $bac$, $cbabcbc$ and $abcbaba$.} 
        \end{figure}

        \begin{figure}[ht]
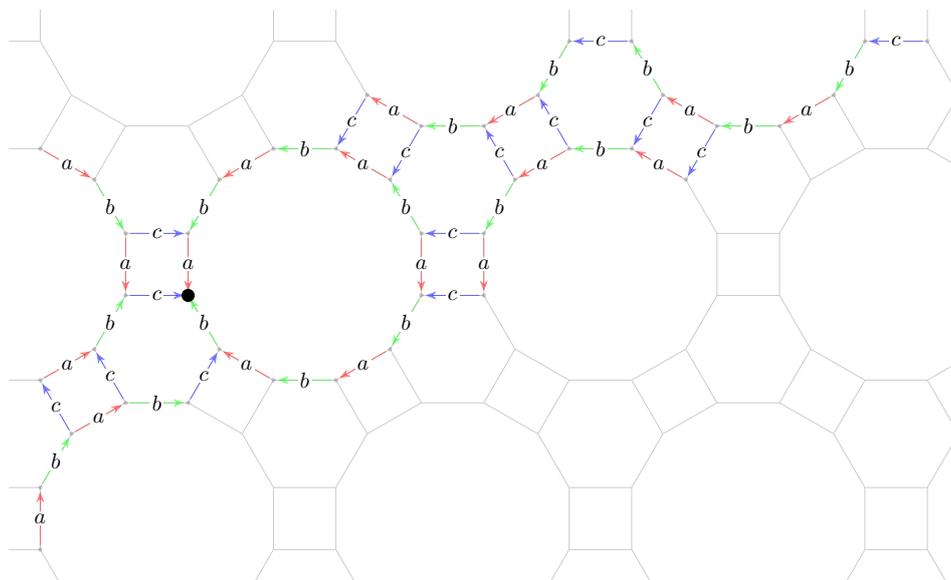
      
            \resizebox{\linewidth}{!}{
                %
            }
            \caption{The smallest Garside family for the affine type Artin-Tits monoid of type $\widetilde{\mathrm{G}}_2$. Here, the diagram fits on a semi-regular tiling of the plane, called the \emph{truncated trihexagonal tiling} which is the Cayley graph of the corresponding Coxeter group. There is again $3$ extremal elements : $abca$, $abcabc$, $cbabcababacbabab$} 
        \end{figure}

\clearpage

\bibliographystyle{ws-ijac}
\bibliography{bibli} 

\end{document}